\newtheorem{theorem}{Theorem}[section]
\newtheorem{lemma}[theorem]{Lemma}
\newtheorem{proposition}[theorem]{Proposition}
\newtheorem{definition}[theorem]{Definition}
\newtheorem*{theorem*}{Theorem}
\newtheorem*{lemma*}{Lemma}
\newtheorem*{remark*}{Remark}
\newtheorem*{definition*}{Definition}
\newtheorem*{proposition*}{Proposition}
\newtheorem*{corollary*}{Corollary}
\numberwithin{equation}{section}
\newcommand{\real}{\mathbb{R}}
\let\ced=\c         
\def\a{\alpha}
\def\qed{\,\unskip\kern 6pt \penalty 500
\raise -2pt\hbox{\vrule \vbox to8pt{\hrule width 6pt
\vfill\hrule}\vrule}\par}
\definecolor{darkblue}{rgb}{0.05, .05, .65}
\definecolor{darkgreen}{rgb}{0.1, .65, .1}
\definecolor{darkred}{rgb}{0.8,0,0}
\newcommand{\beqn}{\begin{equation}}
\newcommand{\eeqn}{\end{equation}}
\newcommand{\bear}{\begin{eqnarray}}
\newcommand{\eear}{\end{eqnarray}}
\newcommand{\bean}{\begin{eqnarray*}}
\newcommand{\eean}{\end{eqnarray*}}
\begin{document}

\title{\huge \bf Blow up profiles for a quasilinear reaction-diffusion equation with weighted reaction}

\author{
\Large Razvan Gabriel Iagar\,\footnote{Instituto de Ciencias
Matem\'aticas (ICMAT), Nicol\'as Cabrera 13-15, Campus de
Cantoblanco, 28049, Madrid, Spain, \textit{e-mail:}
razvan.iagar@icmat.es}, \footnote{Institute of Mathematics of the
Romanian Academy, P.O. Box 1-764, RO-014700, Bucharest, Romania.}
\\[4pt] \Large Ariel S\'{a}nchez,\footnote{Departamento de Matem\'{a}tica
Aplicada, Ciencia e Ingenieria de los Materiales y Tecnologia
Electr\'onica, Universidad Rey Juan Carlos, M\'{o}stoles,
28933, Madrid, Spain, \textit{e-mail:} ariel.sanchez@urjc.es}\\
[4pt] }
\date{}
\maketitle

\begin{abstract}
We perform a thorough study of the blow up profiles associated to
the following second order reaction-diffusion equation with
non-homogeneous reaction:
$$
\partial_tu=\partial_{xx}(u^m) + |x|^{\sigma}u^p,
$$
in the range of exponents $1<p<m$ and $\sigma>0$. We classify blow up solutions in self-similar form, that are likely
to represent typical blow up patterns for general solutions. We thus show that the non-homogeneous coefficient $|x|^{\sigma}$ has a
strong influence on the qualitative aspects related to the finite time blow up. More precisely, for $\sigma\sim0$, blow
up profiles have similar behavior to the well-established profiles for the homogeneous case
$\sigma=0$, and typically \emph{global blow up} occurs, while for
$\sigma>0$ sufficiently large, there exist blow up profiles for which blow up \emph{occurs only at space infinity}, in strong contrast with the
homogeneous case. This work is a part of a larger program of
understanding the influence of unbounded weights on the blow up behavior for reaction-diffusion equations.
\end{abstract}

\

\noindent {\bf AMS Subject Classification 2010:} 35B33, 35B40,
35K10, 35K67, 35Q79.

\smallskip

\noindent {\bf Keywords and phrases:} reaction-diffusion equations,
non-homogeneous reaction, blow up, self-similar solutions, phase
space analysis

\section{Introduction}

In the present work, we deal with the phenomenon of blow up in
finite time for the following quasilinear reaction-diffusion
equation with a weighted reaction term:
\begin{equation}\label{eq1}
u_t=(u^m)_{xx}+|x|^{\sigma}u^p, \qquad u=u(x,t), \quad
(x,t)\in\real\times(0,T),
\end{equation}
in the range of exponents $1<p<m$ and $\sigma>0$, where, as usual,
the subscript notation in \eqref{eq1} indicates partial derivative
with respect to the time or space variable. We say that a solution
$u$ to \eqref{eq1} blows up in finite time if there exists
$T\in(0,\infty)$ such that $u(T)\not\in L^{\infty}(\real)$, but
$u(t)\in L^{\infty}(\real)$ for any $t\in(0,T)$. The time $T<\infty$
satisfying this property is known as the blow up time of $u$. Here
and in all the paper, we denote by $u(t)$ the map $x\mapsto u(x,t)$
for a fixed time $t\in[0,T]$.

The blow up phenomenon for the homogeneous reaction-diffusion
equation
\begin{equation}\label{eq2}
u_t=\Delta u^m+u^p,
\end{equation}
with either $m=1$ or $m>1$ is already well studied, cf. for example
the well-known books \cite{QS}, respectively \cite{S4}, the paper
\cite{GV}, and references therein. Meanwhile, due to its difficulty
introduced by the nonhomogeneous reaction term and the fact that
some important techniques such as translations or intersection
comparison do not work with unbounded weights, Eq. \eqref{eq1} is
much less studied. The main questions one addresses in the study of
the blow up phenomenon are:

\noindent $\bullet$ When does blow up occur (that is, for which
initial data)?

\noindent $\bullet$ In case blow up occurs at time $T\in(0,\infty)$,
what is the time scale (called rate) as $t\to T$?

\noindent $\bullet$ Where does blow up occurs? In which sets?

\noindent $\bullet$ How does blow up occurs? This raises the problem
of the "asymptotic" blow up behavior, that means, to which kind of
profile the solutions approach as $t\to T$.

Answers to most of these questions were given (at least partially)
for the homogeneous equation \eqref{eq2}. On the other hand, for Eq.
\eqref{eq1} and its $N$-dimensional form
\begin{equation}\label{eq3}
u_t=\Delta u^m + |x|^{\sigma}u^p, \qquad (x,t)\in\real^N\times(0,T),
\end{equation}
little is known. Some works were devoted to the semilinear case
$m=1$, establishing the critical (Fujita) exponent
$p_*=1+\frac{\sigma+2}{N}$ below which all the solutions with data
$u_0\in L^{\infty}(\real^N)$ blow up in finite time, and studying
the "life span" of solutions \cite{BK87, BL89, Pi97, Pi98}.
Afterwards, coupled systems of reaction-diffusion semilinear
equations with weighted reaction were also considered and conditions
for global existence or, on the contrary, finite time blow up were
established \cite{IU08}. More recently, some partial but quite
interesting results concerning blow up sets were established for the
semilinear case $m=1$, in particular concerning whether the origin
can or cannot be a blow up point (see for example the series of
papers \cite{GLS, GS11, GLS13}).

Coming back to Eq. \eqref{eq3} with general $m\geq1$, due to its
difficulty given by the play between the three exponents involved,
there are only a few works dealing with it. Suzuki \cite{Su02} gave
a detailed answer in the range $p>m$ to the question concerning
critical exponents limiting finite time blow up, both in the sense
of varying the reaction exponent $p$, but also the behavior of the
initial data $u_0(x)$ as $|x|\to\infty$. More precisely, he proved
the following results for \eqref{eq3}:

\medskip

\noindent (a) There exists an exponent
$p^*_{m,\sigma}:=m+(\sigma+2)/N$, such that if $m<p\leq
p^*_{m,\sigma}$, all nontrivial solutions to \eqref{eq3} blow up in
finite time (there is no global solution). This exponent
$p^*_{m,\sigma}$ is the analogous to the Fujita exponent for
\eqref{eq3}.

\medskip

\noindent (b) If $p>p^*_{m,\sigma}$, then there exists a constant
$A>0$ such that if the initial condition $u_0(x)$ satisfies
$$
\liminf\limits_{|x|\to\infty}|x|^{(\sigma+2)/(p-m)}u_0(x)>A,
$$
then the corresponding solution of the Cauchy problem blows up in
finite time.

\medskip

\noindent (c) If $p>p^*_{m,\sigma}$, then for any
$\a>(\sigma+2)/(p-m)$, there exists a constant $k>0$ such that if
for some $R>0$ sufficiently large,
$$
u_0(x)\leq k|x|^{-\a}, \quad |x|>R>0,
$$
then the corresponding solution to \eqref{eq1} with initial condition
$u_0$ exists globally in time.

The rate of convergence for a very general diffusion (known as
doubly nonlinear) was also investigated by Andreucci and Tedeev
\cite{AT05}. Restricting to \eqref{eq3}, they prove that, when
$m<p<m+2/N$ and $0<\sigma\leq N(p-m)/m$, any nonnegative solution to
\eqref{eq3} having the blow up time $T>0$ satisfies
$$
\|u(t)\|_{\infty}\leq K(T-t)^{-(\sigma+2)/[2(p-1)+\sigma(m-1)]},
\quad \frac{T}{2}<t<T.
$$
As we can see, all these results deal with the case $p>m$, letting
aside (due to technical reasons) the complementary case $1<m\leq p$.
Moreover, up to our knowledge, there is no work on the blow up sets
and blow up behavior for these cases, that is, addressing the third
and fourth questions in the list above.

More recently, the problem of blow up for non-homogeneous but
\emph{localized} reaction terms, that is, equations of the type
$$
u_t=\Delta u^m + a(x)u^p, \quad m>1, \ p>0,
$$
with $a(x)$ a compactly supported function (typically a
characteristic function of a bounded set) has been investigated,
starting from the work by Ferreira, de Pablo and V\'azquez
\cite{FdPV06} dealing with the one-dimensional case. The results
were then generalized to the equation posed in $\real^N$ in
\cite{KWZ11, Liang12} and also to the fast diffusion case $m<1$
\cite{BZZ11}. In all the above mentioned works, interesting
properties related to the Fujita-type exponent are proved: that is,
there are important and striking differences concerning the value of
the Fujita-type critical exponent that differs with respect to
dimension $N=1$, $N=2$ and $N\geq3$ and also in all these cases it
is different from the standard exponent of the homogeneous case.
Moreover, in \cite{FdPV06}, blow up rates, sets and profiles are
also established for the one-dimensional case. But all these works
rely deeply on the fact that the non-homogeneous reaction is
compactly supported (and in particular, there is no reaction close
to the spatial infinity).

This is why, our main goal is to study the influence of the
non-localized weight $|x|^{\sigma}$ (whose main property is that it
precisely weights more while approaching spatial infinity) on the
blow up set and behavior of solutions to \eqref{eq1} (and more
general \eqref{eq3}), trying to give some answers to these questions
that were up to now not properly studied. In the present work, we
restrict ourselves to dimension $N=1$ and the range of exponents
$1<m<p$, the complementary cases being left to be treated in further
papers due to important qualitative differences in the techniques
and results.

\bigskip

\noindent \textbf{Main results.} As it has been noticed since long,
special (particular) solutions, usually in self-similar form,
contain very important information concerning the qualitative
properties of solutions to \eqref{eq3}, and are likely to be
\emph{blow up profiles} for a large class of solutions, that is,
patterns to which solutions approach near their blow up time. That
is the reason for which we want to find and (if possible) classify
self-similar blow up solutions associated to \eqref{eq1}. These are
solutions to \eqref{eq1} (at least at a formal level) having the
particular form:
\begin{equation}\label{SSform}
u(x,t)=(T-t)^{-\alpha}f(\xi), \qquad \xi=|x|(T-t)^{\beta},
\end{equation}
for some positive exponents $\alpha$ and $\beta$ to be determined,
where $T\in(0,\infty)$ is the blow up time. Replacing the form given
in \eqref{SSform} into \eqref{eq1}, we find that the
\emph{self-similar profile} $f$ satisfies the following
non-autonomous differential equation
\begin{equation}\label{SSODE}
(f^m)''(\xi)-\alpha f(\xi)+\beta \xi f'(\xi)+\xi^{\sigma}f(\xi)^p=0, \qquad \xi\in[0,\infty)
\end{equation}
where
\begin{equation}\label{SSexp}
\alpha=\frac{\sigma+2}{2(p-1)+\sigma(m-1)}, \qquad
\beta=\frac{m-p}{2(p-1)+\sigma(m-1)}>0.
\end{equation}
We define our concept of solution we are looking for in the next
\begin{definition}\label{def1}
We say that $f$ solution to \eqref{SSODE} is a \textbf{good profile}
if it fulfills one of the following two properties related to its
behavior at $\xi=0$:

\indent (P1) $f(0)=a>0$, $f'(0)=0$.

\indent (P2) $f(0)=0$, $(f^m)'(0)=0$.

A good profile $f$ is called a \textbf{good profile with interface}
at some point $\eta\in(0,\infty)$ if
$$
f(\eta)=0, \qquad (f^m)'(\eta)=0, \qquad f>0 \ {\rm on} \
(\eta-\delta,\eta), \ {\rm for \ some \ } \delta>0.
$$
\end{definition}
With this definition, we can state our first main result.
\begin{theorem}[Existence of good profiles with interface]\label{th.exist}
For any $\sigma>0$, there exists at least one good profile with
interface $f$ to Eq. \eqref{SSODE}.
\end{theorem}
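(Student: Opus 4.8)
The plan is to construct a good profile with interface via a shooting argument from the interface point back toward the origin. The natural approach for second-order ODE profile problems like \eqref{SSODE} is to reformulate the equation as an autonomous dynamical system in a suitable phase space (the paper's keywords explicitly advertise "phase space analysis"), and then identify the orbit connecting the relevant critical points. Concretely, I would first recast \eqref{SSODE} by setting $X = (f^m)'/(\xi^{\text{something}})$ and similar rescaled variables so that the non-autonomous factors $\xi^\sigma$ and $\xi$ are absorbed; a standard substitution is $\xi = e^s$ together with $f(\xi) = \xi^{-2/(p-1)}\cdot(\text{new variable})$ or an analogous power matching the spatial scale dictated by the $\xi^\sigma f^p$ balance. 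This turns the profile equation into a three-dimensional autonomous system whose finite critical points encode the possible local behaviors at $\xi = 0$, at an interface $\eta \in (0,\infty)$, and as $\xi \to \infty$.

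**Next I would** analyze the local behavior at a prospective interface point $\eta$. The interface conditions $f(\eta) = 0$ and $(f^m)'(\eta) = 0$, together with $f > 0$ just to the left, single out a specific one-parameter family of local solutions emanating (backward in $\xi$) from $\eta$. Near such a point the diffusion term $(f^m)''$ dominates and, writing $f(\xi) \approx C(\eta - \xi)^{1/(m-1)}$ for the leading-order behavior characteristic of porous-medium-type interfaces, one checks by direct substitution that the reaction and convection terms are lower order, so a genuine local profile with interface exists and is essentially unique up to the choice of $\eta$. This gives a well-defined shooting family $\{f_\eta\}_{\eta > 0}$: each $f_\eta$ solves \eqref{SSODE} on a maximal interval $(\xi_\eta, \eta)$ and satisfies the interface conditions at $\eta$.

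**The heart of the argument, and the main obstacle,** is to show that for at least one value of $\eta$ the backward orbit $f_\eta$ reaches $\xi = 0$ while satisfying one of the admissibility conditions (P1) or (P2), rather than blowing up or developing a singularity in the interior. I would set up a continuity/connectedness argument in $\eta$: define two disjoint open sets of "bad" shooting parameters $\eta$ — for instance, the set $\cl A$ where the orbit exits through $f' > 0$ behavior (overshooting, so that $f$ would have to turn around) and the set $\cl B$ where the orbit develops the wrong sign or blows up before reaching the origin. Showing each set is open and nonempty (the nonemptiness typically coming from asymptotic analysis for small $\eta$ and large $\eta$ respectively) and that they cannot exhaust $(0,\infty)$ forces the existence of a separating $\eta^*$ whose orbit lands correctly at the origin, yielding the good profile. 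The delicate point is controlling the solution globally on $(0,\eta)$ — ensuring it stays positive and does not escape — because the non-homogeneous term $\xi^\sigma f^p$ has a sign that helps the blow up (it is a source) but complicates monotonicity estimates; a priori bounds coming from a Lyapunov-type functional or from careful analysis of the autonomous system's invariant regions will be needed to trap the orbit. I expect the compactness/continuous-dependence input that makes the separating value $\eta^*$ actually produce an admissible profile (and not a degenerate limit) to be the most technical step.
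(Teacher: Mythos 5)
Your plan coincides with the paper's actual proof: the authors transform \eqref{SSODE} into autonomous three-dimensional systems, prove (Proposition \ref{prop.uniqFB}) that each interface location $\eta$ determines a unique profile $f_\eta$, and then shoot backward in $\eta$, showing that the set $A_+$ of parameters for which $f_\eta$ crosses zero at an interior point with positive slope and the set $A_-$ of parameters for which $f_\eta(0)>0$ with $f_\eta'(0)<0$ are open, disjoint and nonempty (nonemptiness for large, respectively small, $\eta$ being Propositions \ref{prop.far} and \ref{prop.close}), so that $\eta_0=\sup A_-$ belongs to neither set and, after ruling out the remaining pathological behaviors, yields a good profile. Be aware only that the two nonemptiness statements, which you defer to ``asymptotic analysis,'' carry essentially all the technical weight: the paper obtains them by identifying limit connections inside the invariant planes $X=0$ and $Z=0$ emanating from critical points at infinity on the Poincar\'e hypersphere ($Q_2$ and $Q_1$, both unstable nodes) and then perturbing off these planes by continuous dependence.
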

\noindent \textbf{Remark.} For $\sigma=0$, the analogous of Theorem
\ref{th.exist} is proved in \cite[Theorem 2, p. 187]{S4}. Let us
notice that for $\sigma=0$ there exist only good profiles fulfilling
condition (P1) above. The weighted reaction $|x|^{\sigma}u^p$ in
\eqref{eq1} introduces thus a sharp difference with respect to the
homogeneous case: profiles satisfying condition (P2) above may exist
and they have to be considered as good. Moreover, as we shall see,
profiles in (P2) present interesting properties with respect to the
blow up behavior.

\medskip

In view of the previous remark, a natural question arises: one can
ask in which conditions the good profiles with interface satisfy
condition (P1) and in which conditions they satisfy (P2). This is
the subject of the following two results which reflect a strong
influence of the magnitude of $\sigma>0$ on the blow up behavior.
\begin{theorem}[Good profiles with interface for $\sigma>0$ small]\label{th.small}
There exists $\sigma_*>0$ such that, for any
$\sigma\in(0,\sigma_*)$, any good profile with interface to Eq.
\eqref{SSODE} is of type (P1). In particular, the corresponding
solutions to Eq. \eqref{eq1} blow up globally.
\end{theorem}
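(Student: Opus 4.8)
The plan is to reduce the statement to excluding good profiles with interface of type (P2) when $\sigma$ is small, and to obtain this exclusion by a perturbation argument around the homogeneous case $\sigma=0$. First I would pin down the local behavior at the two ends that a hypothetical type (P2) good profile with interface must display. At the interface $\eta$, inserting $f\sim[k(\eta-\xi)]^{1/(m-1)}$ into \eqref{SSODE} shows that the diffusion and convection terms balance while the reaction $\xi^{\sigma}f^p$ (with $\xi^{\sigma}\to\eta^{\sigma}$ finite) and the term $\alpha f$ are of lower order; this local form is available for every $\sigma\geq0$. At the origin, for $f\sim C\xi^{\gamma}$ the reaction term is of order $\xi^{\sigma+p\gamma}$, strictly higher than the linear terms of order $\xi^{\gamma}$ since $p>1$, so the dominant balance defining a type (P2) behavior is $(f^m)''+\beta\xi f'-\alpha f=0$, forcing $\gamma=2/(m-1)$ and a uniquely determined leading constant $C_*$ (a short computation gives $\alpha-\beta\gamma=1/(m-1)>0$, hence $C_*>0$ exists and is unique). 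The upshot is that, at leading order, the type (P2) germ carries no free parameter: it corresponds to a single orbit in the phase space associated to \eqref{SSODE}, and this germ coincides, to leading order, with the one of the homogeneous equation.

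On this basis I would argue by contradiction and continuity. Suppose there is a sequence $\sigma_n\downarrow0$ for which \eqref{SSODE} admits good profiles with interface $f_n$ of type (P2), with interfaces $\eta_n$. The goal is to pass to the limit and produce a type (P2) good profile with interface for the homogeneous problem $\sigma=0$, contradicting the cited classification \cite[Theorem 2, p.\ 187]{S4}, by which only type (P1) good profiles exist there. To this end I would first establish uniform bounds: an upper bound on the height $M_n=\max f_n$ and on $\eta_n$, together with a uniform positive lower bound away from collapse. The monotonicity structure is useful here: a type (P2) profile vanishes at both ends of its support $(0,\eta_n)$, hence attains an interior maximum $\xi_{0,n}$ where $(f_n^m)''\leq0$; evaluating \eqref{SSODE} there gives $\xi_{0,n}^{\sigma_n}M_n^{p-1}\geq\alpha$, which as $\sigma_n\to0$ controls $M_n$ from below and prevents degeneration. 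With such bounds and interior estimates for the degenerate equation, I would extract a limit $f_*$ solving \eqref{SSODE} with $\sigma=0$, inheriting $f_*(0)=0$, $(f_*^m)'(0)=0$ (type (P2)) and a finite interface $\eta_*=\lim\eta_n$, which is the desired contradiction.

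Equivalently, and perhaps more transparently, I would phrase the same idea in the phase plane: the behaviors (P1) and (P2) at $\xi=0$ correspond to two distinct critical points $P_{1}$ and $P_{2}$ of the autonomous system obtained from \eqref{SSODE} by the usual logarithmic-type change of variables, the interface corresponds to a further boundary orbit, and a good profile with interface is a connection from one origin-point to the interface orbit. Since the positions of these critical points, their local (un)stable manifolds and the connecting orbits depend continuously on $\sigma$, and since at $\sigma=0$ the connection to the interface emanates from $P_{1}$ (this is the content of \cite{S4}), structural stability guarantees that for $\sigma\in(0,\sigma_*)$ with $\sigma_*$ small the good profile with interface produced by Theorem \ref{th.exist} still emanates from $P_{1}$, i.e.\ is of type (P1); the final assertion on global blow up then follows from the strict positivity of a type (P1) profile at the origin.

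The main obstacle I anticipate is precisely the passage to the limit (equivalently, the structural-stability step) at the degenerate origin. There the change of variables that autonomizes \eqref{SSODE} becomes singular, solutions have only finite regularity near their vanishing set, and the reaction term is comparable to diffusion on the relevant scale only through the weight $\xi^{\sigma}$, which itself degenerates as $\xi\to0$. Making the continuity of the (P2) orbit uniform in $\sigma$, ruling out that the interfaces $\eta_n$ escape to $0$ or to $\infty$, and excluding collapse of $f_n$ to the trivial solution are the delicate points; the monotonicity identity at the interior maximum above, together with the fixed leading constant $C_*$ of the (P2) germ, is what I would rely on to make these uniform estimates work and thereby fix the threshold $\sigma_*$.
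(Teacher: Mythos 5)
Your proposal follows essentially the same route as the paper: argue by contradiction along a sequence $\sigma_n\downarrow 0$ of type (P2) good profiles with interface, establish uniform bounds on the interface points and on the profiles, pass to the limit to obtain a type (P2) good profile with interface for the homogeneous equation $\sigma=0$, and use the maximum-point inequality (the paper's Lemma \ref{lem.max}, your $\xi_{0,n}^{\sigma_n}M_n^{p-1}\geq\alpha$) to rule out collapse to the trivial solution before invoking the classification in \cite{S4}. The paper implements exactly this via Lemmas \ref{lem.unibound} and \ref{lem.unibound2} (continuous dependence on $\sigma$ down to $\sigma=0$, using Propositions \ref{prop.close} and \ref{prop.far} to confine the interfaces) followed by Arzel\`a--Ascoli, so the delicate points you flag are the ones the paper actually addresses.

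One concrete inaccuracy: your claim that the type (P2) germ at the origin ``carries no free parameter'' and is forced to have exponent $2/(m-1)$ is wrong. Your dominant-balance analysis only captures the case where $(f^m)''$ balances the linear terms; there is a second admissible (P2) behavior, $f(\xi)\sim K\xi^{(\sigma+2)/(m-p)}=K\xi^{\alpha/\beta}$ with \emph{arbitrary} $K>0$ (the paper's Lemma \ref{lem.1}, behavior \eqref{Beh02}, arising from the two-dimensional center manifold of $P_0$), in which the terms $-\alpha f$ and $\beta\xi f'$ cancel at leading order and both diffusion and reaction are of higher order. This does not derail your compactness argument --- the paper's proof explicitly accommodates both behaviors \eqref{case1} and \eqref{case2} when passing to the limit --- but any version of the argument that leans on uniqueness of the (P2) orbit (as your structural-stability paraphrase implicitly does) would need to be repaired to account for this one-parameter family.
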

In the following numerical experiment (see Figure \ref{fig1}), we represent the evolution of the profiles with interface for $\sigma$ sufficiently small, more precisely for $m=3$, $p=2$ and $\sigma=1$. Let us notice that profiles with interface at points $\eta$ large cut the axis $y=0$ at positive points, then the profiles with interface at small $\eta>0$ cut the axis $x=0$ at some positive point and are decreasing, and in the middle the good profiles with interface touch the vertical axis with varying slopes, which may be both positive and negative. In the middle there is one with $f(0)=a>0$ and slope $f'(0)=0$. All this is proved in Sections \ref{sec.exist} and \ref{sec.small}.
\begin{figure}[ht!]
  \begin{center}
  \includegraphics[width=10cm,height=7.5cm]{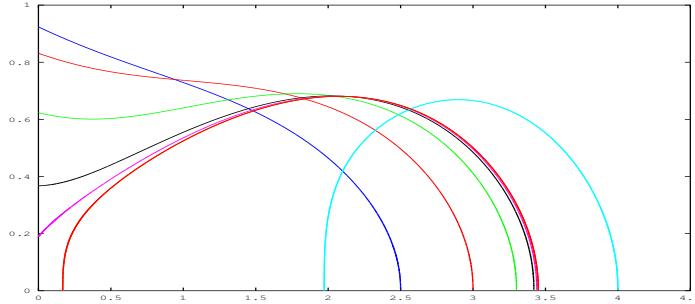}
  \end{center}
  \caption{Evolution of good profiles with interface for $\sigma$ sufficiently small} \label{fig1}
\end{figure}
This does not go very far from the homogeneous case, since the
result is similar for $\sigma=0$. But the next theorem produces a
sharp contrast to the homogeneous case.
\begin{theorem}[Good profiles with interface for $\sigma>0$ large]\label{th.large}
(a) For any $m>1$ and $p\in (1,m)$, there exists $\sigma^*=\sigma^*(m,p)>0$ depending on $m$ and $p$ such that, for $\sigma=\sigma^*$ there exists a good profile with interface solution to Eq. \eqref{SSODE} satisfying property (P2) in Definition \ref{def1} and moreover
\begin{equation}\label{case1}
f(\xi)\sim\left[\frac{m-1}{2m(m+1)}\right]^{1/(m-1)}\xi^{2/(m-1)},
\qquad {\rm as} \ \xi\to0.
\end{equation}
The corresponding solutions to Eq. \eqref{eq1} blows up globally in $\real$.

\medskip

(b) For any $m>1$ and $p\in(1,m)$ there also exists $\sigma_1\geq\sigma^*(m,p)$ sufficiently large such that for any $\sigma\in(\sigma_1,\infty)$, there exist good profiles with interface satisfying property (P2) in Definition \ref{def1} and moreover
\begin{equation}\label{case2}
f(\xi)\sim K\xi^{(\sigma+2)/(m-p)}, \ \ K>0, \qquad {\rm as} \
\xi\to0,
\end{equation}
and in this case, the corresponding solutions to Eq. \eqref{eq1} blow up at space infinity.
\end{theorem}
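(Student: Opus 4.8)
\emph{Overall strategy.} The plan is to recast the non-autonomous profile equation \eqref{SSODE} as an autonomous polynomial system and to realize each admissible local behavior as a critical point, so that a \textbf{good profile with interface} becomes a heteroclinic orbit joining an ``origin'' critical point to the ``interface'' critical point inside the physical region $\{f>0\}$. A convenient choice of variables is, with $s=\ln\xi$,
\[
X=\frac{\xi(f^m)'}{f^m},\qquad Y=\xi^{2}f^{1-m},\qquad Z=\xi^{2+\sigma}f^{p-m},
\]
which turns \eqref{SSODE} into the system (dot $=d/ds$)
\[
\dot X = X-X^{2}+\alpha Y-\tfrac{\beta}{m}XY-Z,\qquad \dot Y = Y\Big(2+\tfrac{1-m}{m}X\Big),\qquad \dot Z = Z\Big(2+\sigma+\tfrac{p-m}{m}X\Big).
\]
The relevant critical points are: $O=(0,0,0)$, a source, corresponding to behavior (P1) with $f(0)=a>0$; $A=(1,0,0)$, a saddle, corresponding to a \emph{generic} zero $f\sim c\xi^{1/m}$ with $(f^m)'\ne0$ (not a good profile); and $B=\left(\frac{2m}{m-1},\frac{2m(m+1)}{m-1},0\right)$, which encodes exactly the case1 behavior \eqref{case1}. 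Here the identity $\alpha-\frac{2\beta}{m-1}=\frac{1}{m-1}$ (immediate from \eqref{SSexp}) both fixes the coordinates of $B$ and produces the universal constant in \eqref{case1}, while $\alpha/\beta=(\sigma+2)/(m-p)$ explains why the linear part of \eqref{SSODE} annihilates the case2 power and forces the diffusion--reaction balance of \eqref{case2}. The case2 behavior and the interface appear at infinity ($Y\to\infty$, respectively $X\to-\infty$, $Y,Z\to\infty$), to be resolved by local changes of variables; the free constant $K$ in \eqref{case2} records a one-parameter family of orbits issuing from the case2 point. The local expansions \eqref{case1}, \eqref{case2} are then justified routinely from these linearizations.

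\emph{Part (a).} A direct computation shows that at $B$ the $(X,Y)$-block has both eigenvalues with negative real part, while the transversal $Z$-direction carries the single eigenvalue $\sigma+\frac{2(p-1)}{m-1}>0$; hence $B$ has a one-dimensional unstable manifold, so the orbit $\gamma_\sigma$ leaving $B$ into $\{Z>0\}$ is \emph{rigid}, and it depends continuously on $\sigma$. A (P2)-case1 good profile with interface is precisely the statement that $\gamma_\sigma$ limits onto the interface point. By Theorem \ref{th.small}, for $\sigma\in(0,\sigma_*)$ no good profile with interface is of type (P2), so for such $\sigma$ the orbit $\gamma_\sigma$ does \emph{not} reach the interface and instead escapes to the critical point at infinity describing globally positive profiles. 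The plan is to show that the opposite alternative holds for suitably larger $\sigma$ and that the two alternatives correspond to separated (open) sets of $\sigma$; a connectedness / intermediate-value argument then furnishes a threshold $\sigma^*$ at which $\gamma_\sigma$ lands exactly on the interface point, giving the desired profile. I expect this to be the \textbf{main obstacle}: one must control the global behavior of $\gamma_\sigma$ uniformly in $\sigma$, isolate a monotone separating quantity (for instance the first $s$ at which $X$ crosses a fixed level, or the sign of $f'$ at the first interior extremum), and rule out degenerate returns, so that the transition is a genuine heteroclinic connection rather than a discontinuous jump.

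\emph{Part (b).} For large $\sigma$ I would instead use the one-parameter family of orbits emanating from the case2 point (parametrized by $K>0$) and shoot in $K$. For $K$ in one range the orbit stays in $\{f>0\}$ and runs off to the infinity critical point without forming an interface; for $K$ in the complementary range the nonlinear terms force $(f^m)''<0$, bending $f$ down to a first, transversal zero (where $(f^m)'\ne0$). Continuity in $K$ together with an intermediate-value argument on the flux $(f^m)'$ at the first zero then yields $K=K(\sigma)$ for which that zero is tangential, i.e. $(f^m)'=0$: a genuine interface, hence a (P2)-case2 good profile with interface. The largeness threshold $\sigma_1$ enters because the estimates that localize the orbit and force the turning of $f$ rely on the degeneration $\beta\to0$, $\alpha\to\frac{1}{m-1}$ as $\sigma\to\infty$ (from \eqref{SSexp}); one may exhibit the connection for the limiting system and continue it to all $\sigma>\sigma_1$ by continuous dependence.

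\emph{Blow-up set.} Both blow-up statements follow immediately from \eqref{SSform}. For fixed $x\ne0$ one has $\xi=|x|(T-t)^{\beta}\to0$ as $t\to T$. Inserting \eqref{case1} and using $\alpha-\frac{2\beta}{m-1}=\frac{1}{m-1}$ gives $u(x,t)\sim C|x|^{2/(m-1)}(T-t)^{-1/(m-1)}\to\infty$, so blow up is global in $\real$, proving (a). Inserting \eqref{case2} and using $(\sigma+2)/(m-p)=\alpha/\beta$ gives $u(x,t)\sim K|x|^{\alpha/\beta}(T-t)^{0}=K|x|^{\alpha/\beta}$, which is finite at every finite $x$; hence blow up occurs only as $|x|\to\infty$, which is the space-infinity assertion in (b).
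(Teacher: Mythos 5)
Your overall architecture is the right one and essentially matches the paper's: both arguments reduce the theorem to a topological shooting in which the orbit issuing from the ``case1'' point (the paper's $P_2$) or from the ``case2'' family (the paper's $P_0$, parametrized by $k>0$) must end at one of exactly three destinations --- the decaying attractor $P_{\gamma_0}$, the sign-changing node $Q_3$ at infinity, or the interface point $P_1$ --- with the first two outcomes open, so that realizing both of them forces the third. Your blow-up-set computations at the end are correct and coincide with the paper's remark. However, the proposal leaves unproved precisely the two nonemptiness statements on which the whole shooting hinges, and these are where essentially all of the work in the paper lies. First, you never establish that for large $\sigma$ the orbit leaving the case1 point actually reaches the sign-changing regime; you label this ``the main obstacle'' and defer it. In the paper this is Proposition \ref{prop.P2large}, proved by constructing two explicit planes \eqref{plane1}--\eqref{plane2} as barriers trapping the orbit out of $P_2$ in a region disjoint from $P_{\gamma_0}$ and $P_1$, combined with the monotonicity Lemmas \ref{lem.flow2} and \ref{lem.flow}; without some substitute for this, part (a) has no second endpoint for the intermediate-value argument and part (b) has no nonempty set $C_0$.

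Second, your small-$\sigma$ input is a non sequitur: Theorem \ref{th.small} tells you that for $\sigma<\sigma_*$ the orbit $\gamma_\sigma$ does not land on the interface point, but it does \emph{not} tell you that it therefore connects to the decaying attractor --- the sign-changing outcome $Q_3$ is equally consistent with the absence of a (P2) interface profile, and if $\gamma_\sigma$ entered $Q_3$ for all $\sigma$ the set $B$ in the trichotomy could be empty. The paper closes this by proving directly (Lemma \ref{lem.homogeneous}, via an energy identity for $\sigma=0$, plus continuity in $\sigma$) that for small $\sigma$ the orbit from $P_2$ enters $P_{\gamma_0}$. Similarly, for part (b) you assert that ``for $K$ in one range the orbit stays in $\{f>0\}$ and runs off to the infinity critical point,'' but the existence of connections from the case2 point to the decaying attractor is itself nontrivial: the paper's Lemma \ref{lem.tail} obtains it from an elliptic-sector analysis of a nonhyperbolic critical point after the substitution $\overline{Z}=XZ$. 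Finally, the exhaustiveness of the three destinations (needed so that ``not $A$ and not $C$'' implies ``interface'') requires the full classification of finite and infinite critical points, including ruling out the point $Q_4$ (the paper's Lemma \ref{lem.Q4}); in your alternative coordinates you would have to redo that classification, in particular resolving the points you place ``at infinity.'' As written, the proposal is a correct skeleton with the load-bearing lemmas missing.
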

\noindent \textbf{Remark. Blow up sets.} In order to clarify the references about the blow up sets in the statement of Theorem \ref{th.large}, we define for a generic solution $u$ to \eqref{eq1} with initial condition $u_0(x):=u(x,0)$ and (finite) blow up time $T\in(0,\infty)$, the \emph{blow up set} \cite[Section 24]{QS} by
\begin{equation}\label{BUS}
B(u_0):=\{x\in\real: \exists(x_k,t_k)\in\real\times(0,T), \ t_k\to T, \ x_k\to x, \ {\rm and} \  |u(x_k,t_k)|\to\infty, \ {\rm as} \ k\to\infty\}.
\end{equation}
With this definition, we notice that for either a good profile with $f(0)=a>0$, that is, fulfilling assumption (P1) in Definition \ref{def1}, or for a profile behaving as in \eqref{case1} as $\xi\to0$, the corresponding solution $u$ blows up globally. Indeed, in the former, we have,
\begin{equation}\label{interm40}
u(x,t)=(T-t)^{-\alpha}f(|x|(T-t)^{\beta})\sim a(T-t)^{-\alpha}, \quad {\rm as} \ t\to T,
\end{equation}
while in the latter case, the solution $u$ satisfies
\begin{equation}\label{interm41}
u(x,t)\sim C(T-t)^{-\alpha+2\beta/(m-1)}|x|^{2/(m-1)}=C(T-t)^{-1/(m-1)}|x|^{2/(m-1)}, \quad {\rm as} \ t\to T,
\end{equation}
and in both cases blows up globally according to the definition of the blow up set \eqref{BUS}. An important remark is that, however, the {\bf blow up rate over fixed compact sets is different} in the two cases, as it readily follows from \eqref{interm40} and \eqref{interm41}. A sharper difference occurs for profiles behaving as in \eqref{case2} as $\xi\to0$. Indeed, for any $x\in\real$ fixed, we find
$$
u(x,t)\sim C(T-t)^{-\alpha+(\sigma+2)\beta/(m-p)}|x|^{(\sigma+2)/(m-p)}=C|x|^{(\sigma+2)/(m-p)}<\infty, \quad {\rm as} \ t\to T,
$$
hence these solutions remain bounded forever at any finite point. However, they still blow up at $t=T$, but only on curves $x(t)$ depending on $t$ such that $x(t)\to\infty$ as $t\to T$. This phenomenon is known in literature as \textbf{blow up at (space) infinity}, which seems to have been considered for the first time by Lacey \cite{La84}, and some other cases where it has been established (even for semilinear reaction-diffusion equation with sufficiently large initial data) appear in \cite{GU05, GU06}. In our opinion, this sharp difference with respect to the blow up set between solutions for $\sigma>0$ small and $\sigma>0$ large is one of the most interesting contributions of the present work.

In the following numerical experiment (see Figure \ref{fig2}), realized for $m=3$, $p=2$ and $\sigma=1,5$, one can see the evolution of the good profiles with interface as expressed in Theorem \ref{th.large}. The main difference with respect to $\sigma>0$ smaller appears to be that all the profiles with interface intersecting the vertical axis do that with negative slope, so that the good profile starts at $\xi=0$ with $f(0)=0$ and $(f^m)'(0)=0$.
\begin{figure}[ht!]
  \begin{center}
  \includegraphics[width=10cm,height=7.5cm]{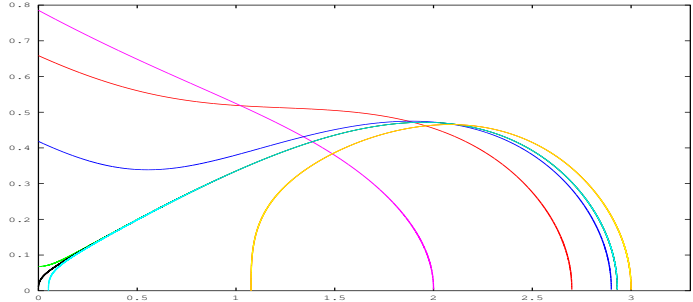}
  \end{center}
  \caption{Evolution of good profiles with interface for $\sigma$ sufficiently large}\label{fig2}
\end{figure}
Theorems \ref{th.small} and \ref{th.large} show that there is a very
strong \emph{influence of $\sigma$ on the blow up behavior} of
solutions to Eq. \eqref{eq1}, which we believe that is one of the
main points of interest of the paper. We thus show that the
weighted reaction introduces some unexpected differences with
respect to the homogeneous reaction, and it is logical that these
influences are noticed more when $\sigma>0$ increases, as the weight
becomes very strong at infinity.

Apart from the good profiles with interface, that are our main
object of interest throughout the paper, there exists another
category of good profiles, more precisely solutions $f$ to
\eqref{SSODE} decaying to zero as $\xi\to\infty$. We can also
classify them.
\begin{theorem}\label{th.decay}
There exists $\sigma_0>0$ such that for any $\sigma\in(0,\sigma_0)$
there exist good profiles satisfying property (P2) with both
behaviors \eqref{case1} and \eqref{case2} near $\xi=0$ and with the following
decay rate at space infinity
\begin{equation}\label{tail}
f(\xi)\sim \left(\frac{1}{p-1}\right)^{1/(p-1)}\xi^{-\sigma/(p-1)},
\qquad {\rm as} \ \xi\to\infty.
\end{equation}
Moreover, for any $\sigma>0$ there exists at least a good profile satisfying property (P2) with behavior \eqref{case2} near $\xi=0$ and with the decay rate at infinity given by \eqref{tail}.
\end{theorem}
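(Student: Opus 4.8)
The plan is to recast the profile equation \eqref{SSODE} as an autonomous dynamical system and to realize the profiles of Theorem \ref{th.decay} as heteroclinic connections between a critical point encoding the behavior near $\xi=0$ and a critical point ``at infinity'' encoding the decay \eqref{tail}. Passing to the independent variable $s=\ln\xi$ and to a pressure-type unknown, the weight $\xi^{\sigma}$ and the transport coefficient $\beta\xi$ get absorbed into new dependent variables, producing a three-dimensional system (reducing to the planar system of the homogeneous case \cite{S4} as $\sigma\to0$). I expect this to be the same phase space already set up for Theorems \ref{th.exist}--\ref{th.large}, so I would reuse it directly. A dominant-balance computation singles out \eqref{tail} as the equilibrium where the transport terms $-\alpha f+\beta\xi f'$ balance the reaction $\xi^{\sigma}f^{p}$, the diffusion being negligible; using \eqref{SSexp} one checks that $\alpha+\beta\sigma/(p-1)=1/(p-1)$, which forces both the exponent $-\sigma/(p-1)$ and the constant $(1/(p-1))^{1/(p-1)}$ in \eqref{tail}.

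Next I would linearize at the critical point $P_{\infty}$ associated with \eqref{tail} and show that it attracts, along its stable set, the orbits reaching it as $s\to+\infty$, so that convergence to $P_{\infty}$ forces precisely the algebraic tail \eqref{tail} rather than any exceptional faster decay. From the earlier sections I would recall that the behaviors \eqref{case1} and \eqref{case2} near $\xi=0$ correspond to two distinct critical points $P_{1}$ and $P_{2}$, with log-derivatives $2/(m-1)$ and $(\sigma+2)/(m-p)$ respectively; the orbits leaving $P_{2}$ as $s$ increases form a one-parameter family indexed by the constant $K$ in \eqref{case2}, while the orbit leaving $P_{1}$ is rigid, the constant in \eqref{case1} being fixed. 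These emanating orbits are exactly the good profiles of type (P2) with the prescribed near-origin behavior.

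For the statement valid for all $\sigma>0$, I would connect $P_{2}$ to $P_{\infty}$. The idea is to build a bounded region, positively invariant for the flow, that contains part of the unstable set of $P_{2}$ and inside which $P_{\infty}$ is the only admissible $\omega$-limit, the interface manifold $f=0$, $(f^m)'=0$ and the escape directions being excluded by the choice of region; a monotonicity argument for one coordinate along the confined orbit then yields convergence to $P_{\infty}$, and hence a good profile with behavior \eqref{case2} and tail \eqref{tail}, for every $\sigma>0$. Because the $P_{2}$-family is one-parameter and $P_{\infty}$ attracts a full-dimensional stable set, this connection is robust in $\sigma$.

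For $\sigma\in(0,\sigma_0)$ I would additionally connect $P_{1}$ (behavior \eqref{case1}) to $P_{\infty}$. Here the smallness of $\sigma$ is essential: in the equation for the extra coordinate the weight enters through a factor proportional to $\sigma$, so the three-dimensional flow is a slow perturbation of the planar homogeneous flow, whose connecting orbit is known from \cite[Theorem 2]{S4}; by continuous dependence and hyperbolicity of the critical points the connection from $P_{1}$ persists for $\sigma$ small. A dimension count---$P_{1}$ carries only a rigid one-dimensional unstable manifold while the stable set of $P_{\infty}$ has codimension one---explains why this connection is non-generic and is obtained only for small $\sigma$, whereas for larger $\sigma$ the $P_{1}$-orbit is instead driven to the interface, consistently with Theorems \ref{th.small}--\ref{th.large}. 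The main obstacle throughout is the global control of these connections, namely ruling out that a confined orbit escapes in phase space or hits the interface at finite $\xi$, and, for the small-$\sigma$ part, making the slow perturbation argument uniform down to $\sigma=0$, where the dimension of the system drops.
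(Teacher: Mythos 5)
Your global frame agrees with the paper's: the same three-dimensional phase space, the identification of the tail \eqref{tail} with a critical point via the identity $\alpha+\beta\sigma/(p-1)=1/(p-1)$, the rigid orbit for \eqref{case1} versus the one-parameter family indexed by $k$ for \eqref{case2}, and perturbation from $\sigma=0$ for the first assertion (in the paper's labels, your $P_\infty$, $P_1$, $P_2$ are $P_{\gamma_0}=(0,0,1/(p-1))$, $P_2$ and $P_0$ respectively). However, there are two genuine gaps. The first concerns the claim ``for any $\sigma>0$'', which is exactly where the paper needs its most technical tool, and your substitute is only an announcement: you never construct the bounded positively invariant region, and constructing it is the whole difficulty. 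Both endpoints of the desired connection ($P_0$ and $P_{\gamma_0}$) are non-hyperbolic points sitting on the invariant plane $\{X=0\}$, and for $\sigma$ large part of the one-parameter family out of $P_0$ demonstrably escapes to the stable node at infinity $Q_3$ or enters the interface point $P_1$ (this is precisely how Theorem \ref{th.large}(b) is proved), so no region can trap the whole family; you must single out which orbits you trap and why, and a monotone coordinate alone does not do this in a three-dimensional flow where Poincar\'e--Bendixson is unavailable. The paper's route (Lemma \ref{lem.tail}) is quite different: the change of variable $\overline{Z}=XZ$ merges $P_0$ and $P_{\gamma_0}$ into a single degenerate critical point of system \eqref{PSsyst3}; a center-manifold reduction followed by Lyagina's classification \cite{Ly51} of quadratic planar fields exhibits an \emph{elliptic sector}, i.e.\ orbits that both leave and re-enter that point, and these are exactly the connections $P_0\to P_{\gamma_0}$, for every $\sigma>0$. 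An argument of this strength (or an actual trapping construction) is indispensable; without it the second half of the theorem is unproved.

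The second gap is in the small-$\sigma$ part, where your perturbation argument rests on inputs that are wrong or missing. The homogeneous-case fact you invoke as known is not what \cite[Theorem 2, p.~187]{S4} says (that theorem concerns profiles with $f(0)=a>0$, $f'(0)=0$); the statement you actually need -- that at $\sigma=0$ the orbit out of $P_2$ and all orbits out of $P_0$ enter the constant equilibrium -- is proved in the paper as a separate result (Lemma \ref{lem.homogeneous}), by multiplying \eqref{SSODE} by $f^{m-1}f'$ and integrating to exclude a second zero of $f$, and some such proof must be supplied. Moreover, ``continuous dependence and hyperbolicity of the critical points'' is not available: $P_0$ and $P_{\gamma_0}$ both carry two-dimensional center manifolds, and the persistence mechanism is really that $P_{\gamma_0}$ is an \emph{attractor} for every $\sigma\geq0$, so that entering its open basin is a condition stable under perturbation of $\sigma$. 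This also exposes an internal contradiction in your write-up: in your third paragraph you (correctly) say $P_\infty$ attracts a full-dimensional stable set, while in the fourth you say its stable set has codimension one and the connection from the \eqref{case1}-point is non-generic. If that dimension count were right, the connection would exist only for isolated values of $\sigma$ and the theorem's claim on a whole interval $(0,\sigma_0)$ would be false. The genuinely codimension-one connection is the one into the interface point $P_1$, which occurs at the critical value $\sigma^*$ of Theorem \ref{th.large}(a); the connection into $P_{\gamma_0}$ is an open condition in $\sigma$, which is what the theorem exploits.
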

\noindent \textbf{Remark.} The result in Theorem \ref{th.decay}
shows another difference between the cases $\sigma=0$ and $\sigma>0$, holding even for $\sigma>0$ small. Indeed, in the homogeneous case
$\sigma=0$, no solutions decaying to zero as $\xi\to\infty$ exist,
which readily follows as a byproduct of \cite[Theorem 2, p. 187]{S4}
and \cite[Lemma 3 and Corollary, p. 264-265]{S4}. While for
$\sigma>0$, there are even two types of such solutions, with sharply
different blow up behavior: global blow up for solutions as in
\eqref{case1} and blow up at space infinity for solutions as in
\eqref{case2}.

\medskip

Another interesting point in the paper is, in our opinion, the
\emph{general techniques we are using for the proofs}. Since already
three decades, the shooting method has imposed itself as a standard,
and very useful, technique in the classification of the self-similar
profiles to diffusion equations, in a variety of situations,
starting from blow up profiles for reaction-diffusion equations (see
for example \cite[Chapter 4]{S4}), to algebraic decay or extinction
profiles for diffusion equations with absorption (see as relevant
examples \cite{FV, CQW03}) or more recently, also for equations with
gradient terms (see for example \cite{IL13a, IL13b}). In all these
cited works, the goal was to classify profiles $f(\xi)$ solutions to
ordinary differential equations such that $f(0)=a>0$, $f'(0)=0$ and
$f$ decreasing while positive, and the shooting parameter is
$f(0)=a>0$.

In our case, this direct shooting technique cannot be used. First of all, there is the (non-trivial) technical problem that our
profiles are not decreasing; indeed, they might have one or more
local maximum points at $\xi>0$ due to the effect of the weight
$\xi^{\sigma}$. But even most important, we cannot apply the
standard shooting method since, as we shall see, we have to allow as
"good solutions", profiles \emph{starting from zero}, that is,
solutions $f(\xi)$ with $f(0)=0$ (and a suitable local behavior near
$\xi=0$) and at $\xi=0$, Eq. \eqref{SSODE} lacks the uniqueness
property in our range of exponents $1<p<m$. That is why, one cannot
use $f(0)$ as a shooting parameter. Still keeping the general idea
of the shooting, we instead construct our proofs on basis of a
\emph{backward shooting method from the interface point}. Indeed,
for any $\eta\in(0,\infty)$, there exists a unique profile $f$ such
that $f(\eta)=0$, $f>0$ in $(0,\eta)$ and $(f^m)'(\eta)=0$, and we
will use this $\eta\in(0,\infty)$ as the shooting parameter and try
to trace backward the unique profile $f_{\eta}$ vanishing at the
given $\eta$. We think that this particular form of applying the shooting technique was very seldom exploited in literature, as we only have knowledge of the paper \cite{GP76} using it in this way. Let us stress at this point the other big technique we use in the present paper, that of a careful and complete study of a phase space associated to a system of three ODEs. Such a technique is rather involved and not very common for systems of more than two ODEs, see for example \cite{BHK01, dPS00} or our companion paper \cite{IS1} where such analysis are also performed.

\section{Self-similar profiles. The phase space}\label{sec.local}

This rather technical section is devoted to the local analysis of a
suitable phase space associated to a quadratic autonomous dynamical
system into which \eqref{SSODE} can be transformed. All the results
in this section can be seen as technical preliminaries for the
global analysis which is performed later. We thus transform Eq.
\eqref{SSODE} into an autonomous dynamical system of three equations
of order one, by letting in a first step
\begin{equation}\label{PSchange1}
x(\eta)=f^{m-1}(\xi), \ \ y(\eta)=(f^{m-2}f')(\xi), \ \ z(\eta)=\xi,
\end{equation}
and the new independent variable $\eta=\eta(\xi)$ satisfies
$$
\frac{d\eta}{d\xi}=mf^{m-1}(\xi)=mx(\eta).
$$
With this notation, it is easy to check that Eq. \eqref{SSODE}
transforms into the following system:
\begin{equation}\label{PSsyst1}
\left\{\begin{array}{ll}\dot{x}=m(m-1)xy,\\
\dot{y}=-my^2-\beta yz+\alpha x-|z|^{\sigma}x^{(m+p-2)/(m-1)},\\
\dot{z}=mx.\end{array}\right.
\end{equation}
This system, however, have an important disadvantage: in it, there
are still terms with fractional powers, which sometimes are
difficult to handle. That is why we also consider a different change of variables leading to an autonomous system which is quadratic. More
precisely, let
\begin{equation}\label{PSchange2}
X(\eta)=\xi^{-2}f^{m-1}(\xi), \ \
Y(\eta)=\xi^{-1}f^{m-2}(\xi)f'(\xi), \ \
Z(\eta)=\xi^{\sigma}f^{p-1}(\xi),
\end{equation}
where the new independent variable $\eta=\eta(\xi)$ satisfies
$$
\frac{d\eta}{d\xi}=\frac{1}{m}\xi f^{1-m}(\xi).
$$
With this notation, Eq. \eqref{SSODE} transforms into the following
system (the calculations leading to it are sometimes tedious, but
straightforward and are left to the reader):
\begin{equation}\label{PSsyst2}
\left\{\begin{array}{ll}\dot{X}=mX[(m-1)Y-2X],\\
\dot{Y}=-mY^2-\beta Y+\alpha X-mXY-XZ,\\
\dot{Z}=mZ[(p-1)Y+\sigma X].\end{array}\right.
\end{equation}
We will use freely both systems in the subsequent analysis, as both
of them have advantages and disadvantages for the study. We keep the notation with capital letters $X$, $Y$, $Z$ when referring
to the system \eqref{PSsyst2} and with lower case letters $x$, $y$,
$z$ when referring to the system \eqref{PSsyst1}. Notice that in
both systems, the planes $\{X=0\}$ and $\{Z=0\}$ (respectively $\{x=0\}$)) are invariant and we work only with $X\geq0$ and $Z\geq0$ (respectively $x\geq0$,
$z\geq0$), only $Y$ (resp. $y$) may change sign.

\medskip

\noindent \textbf{Local analysis of the finite critical points in
the phase space.} We will take as phase space of reference, the one
associated to the system \eqref{PSsyst2}. We readily notice that
this system has four categories of finite critical points: three
isolated ones
$$
P_0=(0,0,0), \ \ P_1=\left(0,-\frac{\beta}{m},0\right), \ \
P_2=\left(\frac{m-1}{2m(m+1)},\frac{1}{m(m+1)},0\right),
$$
and a line of critical points on the $OZ$-axis, denoted by
$P_{\gamma}=(0,0,\gamma)$, for any $\gamma>0$. Of course, one can
say that $P_0$ belongs to the same axis (with $\gamma=0$), but as we
shall see, the origin is qualitatively very different from all the
other $P_{\gamma}$ with $\gamma>0$ and should be considered
separately.

\medskip

\begin{lemma}[Analysis of the point $P_0=(0,0,0)$]\label{lem.1}
The system in a neighborhood of the critical point $P_0$ has a
one-dimensional stable manifold and a two-dimensional center
manifold. The connections over the center manifold go out of the
point $P_0$ and contain profiles with the local behavior:
\begin{equation}\label{Beh02}
f(\xi)\sim k\xi^{(\sigma+2)/(m-p)}=k\xi^{\alpha/\beta}, \qquad
f(0)=0,
\end{equation}
for any constant $k>0$.
\end{lemma}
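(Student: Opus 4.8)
The plan is to perform a standard local stability analysis of the quadratic system \eqref{PSsyst2} at $P_0$, supplement it with a center manifold reduction to resolve the degenerate directions, and then translate the resulting phase-space asymptotics back into profile behavior by inverting the change of variables \eqref{PSchange2}.

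First I would linearize \eqref{PSsyst2} at $P_0=(0,0,0)$. A direct differentiation gives
\begin{equation*}
J(P_0)=\begin{pmatrix} 0 & 0 & 0 \\ \alpha & -\beta & 0 \\ 0 & 0 & 0 \end{pmatrix},
\end{equation*}
whose eigenvalues are $\lambda_1=-\beta$ and $\lambda_2=\lambda_3=0$. Since $\beta>0$ by \eqref{SSexp}, the eigenvalue $-\beta$ is simple and negative, with eigenvector $(0,1,0)$ along the $OY$-axis; this produces a one-dimensional stable manifold tangent to the $OY$-axis at $P_0$. The double zero eigenvalue is semisimple, with two-dimensional eigenspace the plane $\{Y=(\alpha/\beta)X\}$ spanned by $(1,\alpha/\beta,0)$ and $(0,0,1)$, so the Center Manifold Theorem furnishes a two-dimensional local center manifold $W^c$ tangent to this plane. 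As there are no eigenvalues with positive real part, this already yields the full local structure claimed.

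Next I would determine the flow on $W^c$, where the linearization carries no information. Writing $W^c$ as a graph $Y=h(X,Z)$ with $h(X,Z)=(\alpha/\beta)X+O(|(X,Z)|^2)$ and inserting it into the first and third equations of \eqref{PSsyst2} gives, to leading order,
\begin{equation*}
\dot X \approx m\Big[(m-1)\tfrac{\alpha}{\beta}-2\Big]X^2, \qquad \dot Z\approx m\Big[(p-1)\tfrac{\alpha}{\beta}+\sigma\Big]XZ.
\end{equation*}
The decisive computation is the sign of the bracket in the $\dot X$ equation: using $\alpha/\beta=(\sigma+2)/(m-p)$ one checks that $(m-1)(\sigma+2)-2(m-p)>0$ for every $\sigma\geq0$, precisely because $p>1$, so $(m-1)\alpha/\beta>2$. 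Hence $\dot X>0$ for $X>0$ small, and similarly $\dot Z>0$ for $X,Z>0$, so every orbit on $W^c$ with $X,Z\geq0$ moves away from $P_0$ as $\eta$ increases; that is, the connections go \emph{out} of $P_0$.

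Finally I would recover the profile asymptotics. Along an outgoing orbit $X(\eta)\to0$ as $\eta\to-\infty$; from \eqref{PSchange2} and $\frac{d\eta}{d\xi}=\frac1m\xi f^{1-m}$ one obtains $\frac{d}{d\eta}\ln\xi=mX$, while the first equation gives $\frac{d}{d\eta}\ln X=m[(m-1)Y-2X]\approx mX[(m-1)\tfrac{\alpha}{\beta}-2]$ on $W^c$. Dividing these cancels the factor $mX$ and yields $\frac{d\ln X}{d\ln\xi}\to(m-1)\tfrac{\alpha}{\beta}-2$ as $P_0$ is approached, whence $X\sim k^{m-1}\xi^{(m-1)\alpha/\beta-2}$; combined with $X=\xi^{-2}f^{m-1}$ this is exactly $f(\xi)\sim k\xi^{\alpha/\beta}$, with $\xi\to0^+$ matching $\eta\to-\infty$ and $f(0)=0$, the free constant $k>0$ being the integration constant of the one-parameter orbit family on $W^c$. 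I expect the center manifold step to be the main obstacle: since the eigenvalue is doubly zero, the linear part gives no stability information, so one must justify the graph expansion and control the quadratic remainder carefully enough both to pin the sign of the flow (through the inequality $(m-1)\alpha/\beta>2$) and to extract the sharp exponent together with its free coefficient. The stable/center linear splitting is routine by comparison.
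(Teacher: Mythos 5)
Your proof is correct and takes essentially the same route as the paper: linearization at $P_0$, a center manifold reduction tangent to the plane $\{\beta Y=\alpha X\}$ (the paper's auxiliary variable $W=\beta Y-\alpha X$ is exactly your graph condition $Y=(\alpha/\beta)X+O(|(X,Z)|^2)$), the same reduced flow $\dot{X}\approx\frac{m}{\beta}X^2$, $\dot{Z}\approx\frac{m}{\beta}XZ$ (your two brackets both equal $1/\beta$ by \eqref{SSexp}), and the same conclusion $f\sim k\xi^{\alpha/\beta}$. The only cosmetic difference is that the paper reads off the profile from the first integral $Z\sim kX$ while you integrate $d\ln X/d\ln\xi$ along the orbit; the two computations are equivalent.
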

\begin{proof}
The linearization of the system \eqref{PSsyst2} near this critical
point has the matrix:
$$
M(P_0)=\left(
      \begin{array}{ccc}
        0 & 0 & 0 \\
        \alpha & -\beta & 0 \\
        0 & 0 & 0 \\
      \end{array}
    \right)
$$
hence it has a one-dimensional stable manifold (corresponding to the
eigenvalue $-\beta$) and a two-dimensional center manifold. As we
are interested in the orbits going out of $P_0$ in the phase space
(if any), we will analyze this center manifold and the flow on it
following the recipe given in \cite[Theorem 1, Section 2.12]{Pe}. In
order to put the system in a "canonical form" near $P_0$, we
introduce the new variable
$$
W:=\beta Y-\alpha X
$$
and after straightforward calculations, we obtain the new system
\begin{equation}\label{interm1}
\left\{\begin{array}{ll}\dot{X}=\frac{m(m-1)}{\beta}XW+\frac{m}{\beta}X^2,\\
\dot{W}=-\beta W-\frac{m}{\beta}W^2-\frac{m[\alpha(m+1)+\beta]}{\beta}XW-\frac{m\alpha}{\beta}(\alpha+\beta+1)X^2-\beta XZ,\\
\dot{Z}=\left[\frac{m(p-1)\alpha}{\beta}+m\sigma\right]XZ+\frac{m(p-1)}{\beta}WZ.\end{array}\right.
\end{equation}
We are then in position to apply Theorem 1 in \cite[Section
2.12]{Pe}, and to look for a center manifold of the form
$$
W=h(X,Z):=aX^2+bXZ+cZ^2+O(|(X,Z)|^3).
$$
We introduce this ansatz in Theorem 1 in \cite[Section 2.12]{Pe} and conclude that the center manifold is given by the equation
$$
h(X,Z)=-\frac{m\alpha(\alpha+\beta+1)}{\beta^2}X^2-XZ+O(|(X,Z)|^3).
$$
Using the same theorem, replacing $h(X,Z)$ by its formula and taking into account the expressions of $\alpha$ and $\beta$ from \eqref{SSexp}, we
get that the flow on the center manifold in a neighborhood of the point $P_0$ is given by the following system:
$$
\left\{\begin{array}{ll}\dot{X}=\frac{m}{\beta}X^2+O(|(X,Z)|^3),\\
\dot{Z}=\frac{m}{\beta}XZ+O(|(X,Z)|^3),\end{array}\right.
$$
Integrating this system up to first order, we find that $Z\sim kX$ for some positive constant $k$. Coming back to profiles in \eqref{PSchange2}, we
obtain
$$
\xi^{\sigma}f^{p-1}(\xi)\sim k\xi^{-2}f^{m-1}(\xi), \qquad k>0,
$$
whence the orbits going out of $P_0$ on the two-dimensional center manifold contain profiles of the form given in \eqref{Beh02}.
\end{proof}
\begin{lemma}[Analysis of the point
$P_1=\left(0,-\beta/m,0\right)$]\label{lem.2} The system in a
neighborhood of the critical point $P_1$ has a one-dimensional
unstable manifold and a two-dimensional stable manifold. The orbits
entering $P_1$ on the stable manifold contain profiles such that
\begin{equation}\label{IFeq}
f(\xi)\sim\left[C-\frac{\beta(m-1)}{2m}\xi^2\right]^{1/(m-1)}, \quad
C>0,
\end{equation}
for $\xi\to\xi_0=\sqrt{2mC/(m-1)\beta}\in(0,\infty)$.
\end{lemma}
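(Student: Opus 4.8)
The plan is to treat $P_1$ as a \emph{hyperbolic} critical point, so that the stable manifold theorem applies directly and no center manifold computation (as was needed for $P_0$) is required. First I would linearize the system \eqref{PSsyst2} at $P_1=(0,-\beta/m,0)$. Writing the right-hand sides as $F_1,F_2,F_3$ and differentiating, one finds that all off-diagonal entries except $\partial F_2/\partial X=\alpha-mY-Z$ vanish at $P_1$, so the Jacobian is lower triangular:
$$
M(P_1)=\begin{pmatrix} -(m-1)\beta & 0 & 0 \\ \alpha+\beta & \beta & 0 \\ 0 & 0 & -(p-1)\beta \end{pmatrix}.
$$
Its eigenvalues are therefore the diagonal entries $-(m-1)\beta$, $\beta$ and $-(p-1)\beta$. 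Since $m>1$, $p>1$ and $\beta>0$, exactly one eigenvalue is positive and two are negative, yielding a one-dimensional unstable manifold (eigendirection $(0,1,0)$ along the $Y$-axis) and a two-dimensional stable manifold, as claimed. The stable eigendirections are $(1,-(\alpha+\beta)/(m\beta),0)$, lying in the invariant plane $\{Z=0\}$, for the eigenvalue $-(m-1)\beta$, and $(0,0,1)$, along the $OZ$-axis, for $-(p-1)\beta$; as $p<m$ the latter is the slower direction, so generic orbits on the stable manifold enter $P_1$ tangent to the $Z$-axis.

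Next I would translate convergence to $P_1$ into the behavior of the profile $f$. Recall that the independent variable obeys $d\eta/d\xi=\tfrac1m\xi f^{1-m}$, equivalently $d\xi/d\eta=m\xi X$. Along an orbit entering $P_1$ (as $\eta\to+\infty$) one has $X=\xi^{-2}f^{m-1}\to0$ and $Z=\xi^\sigma f^{p-1}\to0$, which force $f\to0$, while $Y=\xi^{-1}f^{m-2}f'\to-\beta/m$. Because $P_1$ is hyperbolic, the stable manifold theorem provides exponential convergence, so $X$ is exponentially small and $\xi_0-\xi(\eta)=\int_\eta^\infty m\xi X\,d\eta'$ is finite; hence $f$ vanishes at a finite value $\xi_0=\lim_{\eta\to+\infty}\xi<\infty$, and approaching the interface $\xi\to\xi_0^-$ with $f\to0^+$ corresponds precisely to $\eta\to+\infty$, as required for entering $P_1$.

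The precise local profile \eqref{IFeq} then follows from the $Y$-coordinate of $P_1$ by integration. Since $Y=\frac{1}{(m-1)\xi}(f^{m-1})'$, the limit $Y\to-\beta/m$ gives $(f^{m-1})'(\xi)\sim-\frac{(m-1)\beta}{m}\xi$ near $\xi_0$; integrating from $\xi$ to $\xi_0$ and using $f(\xi_0)=0$ yields
$$
f^{m-1}(\xi)\sim\frac{\beta(m-1)}{2m}\bigl(\xi_0^2-\xi^2\bigr)=C-\frac{\beta(m-1)}{2m}\xi^2,\qquad C=\frac{\beta(m-1)}{2m}\xi_0^2,
$$
which is exactly \eqref{IFeq}, and the relation $C=k\xi_0^2$ with $k=\beta(m-1)/2m$ reproduces $\xi_0=\sqrt{2mC/[(m-1)\beta]}$; one checks at once that $(f^m)'(\xi_0)=0$, consistent with the interface conditions. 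I expect the main obstacle to be the rigorous justification of this integration step: the relation $Y\to-\beta/m$ holds only in the limit, so to promote the formal leading balance to a genuine asymptotic equivalence I would need the quantitative exponential rate from the stable manifold theorem to control the error $(f^{m-1})'(\xi)+\frac{(m-1)\beta}{m}\xi$ and show that it integrates to a contribution of strictly higher order than $C-k\xi^2$ as $\xi\to\xi_0$. A useful cross-check is a direct dominant-balance analysis in \eqref{SSODE}, in which the terms $-\alpha f$ and $\xi^\sigma f^p$ are negligible against $(f^m)''$ and $\beta\xi f'$ near the interface; equating the latter two fixes $k=\beta(m-1)/2m$ independently and confirms the constant.
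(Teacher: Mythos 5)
Your proposal is correct and follows essentially the same route as the paper: compute the (lower triangular) Jacobian at $P_1$, read off the eigenvalues $-(m-1)\beta$, $\beta$, $-(p-1)\beta$ to get the $1$d unstable / $2$d stable splitting, and then integrate $Y=\frac{1}{(m-1)\xi}(f^{m-1})'\to-\beta/m$ to obtain \eqref{IFeq}. Your additions — the argument via $d\xi/d\eta=m\xi X$ and exponential convergence showing the interface point $\xi_0$ is finite, and the dominant-balance cross-check of the constant — are sound refinements of details the paper passes over quickly (note that the integration step needs no exponential rate: integrating $(f^{m-1})'(s)=-\frac{(m-1)\beta}{m}s(1+o(1))$ over the shrinking interval $[\xi,\xi_0]$ already yields the asymptotic equivalence).
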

\begin{proof}
The linearization of the system \eqref{PSsyst2} near this critical
point has the matrix:
$$M(P_1)=\left(
  \begin{array}{ccc}
    -\beta(m-1) & 0 & 0 \\
    \alpha+\beta & \beta & 0 \\
    0 & 0 & -(p-1)\beta \\
  \end{array}
\right)
$$
with eigenvalues $\lambda_1=-\beta(m-1)$, $\lambda_2=\beta$ and
$\lambda_3=-(p-1)\beta$ and respective eigenvectors (not normalized)
$e_1=(1,-(\alpha+\beta)/m\beta,0)$, $e_2=(0,1,0)$ and $e_3=(0,0,1)$.
Then, there is a two-dimensional stable manifold, with orbits
entering the point $P_1$ in the phase space, and (as it is easy to
check) a unique orbit going out of $P_1$ along the $Y$-axis. We will
be interested in the profiles contained in the orbits entering
$P_1$. Taking into account the change of variables
\eqref{PSchange2}, in particular that
\begin{equation}\label{interm2}
Y(\xi)=\xi^{-1}(f^{m-2}f')(\xi)=\frac{\xi^{-1}}{m-1}(f^{m-1})'(\xi)
\end{equation}
and that $Y(\xi)\sim -\beta/m$ when entering $P_1$ either when
$\xi\to\infty$ or when $\xi\to\xi_0\in(0,\infty)$, by integration in
\eqref{interm2} we find that the orbits entering $P_1$ contain
profiles satisfying \eqref{IFeq}. We readily notice that these
profiles satisfy the flow equation at the zero point $\xi=\xi_0$,
that is
$$
\lim\limits_{\xi\to\xi_0}(f^{m})'(\xi)=0,
$$
hence these profiles satisfying \eqref{IFeq} present an interface point at finite distance
$\xi=\xi_0\in(0,\infty)$. These will be the profiles we are mostly
interested in within the present paper, as their existence is
characteristic to the range of exponents $1<p<m$ even in the
homogeneous case $\sigma=0$ \cite{S4}.
\end{proof}
As an interesting remark, it will be useful for some purposes to see
these orbits also in the first phase space, that is, the one
associated to the system \eqref{PSsyst1}. In that system, and taking
into account \eqref{IFeq}, we notice that
$$
my(\xi)+\beta z(\xi)=\frac{m}{m-1}(f^{m-1})'(\xi)+\beta\xi\to0,
\quad {\rm as} \ \xi\to\xi_0,
$$
so that, viewed in the phase space associated to \eqref{PSsyst1},
the critical point $P_1$ expands into the critical half-line
$my+\beta z=0$ with $z>0$ and $y<0$.

\begin{lemma}[Analysis of the point
$P_2=((m-1)/2m(m+1),1/m(m+1),0)$]\label{lem.3} The system in a
neighborhood of the critical point $P_2$ has a two-dimensional
stable manifold and a one-dimensional unstable manifold. The stable
manifold is included in the invariant plane $Z=0$. There exists a
unique orbit going out of $P_2$, containing profiles which locally
satisfy
\begin{equation}\label{Beh01}
f(0)=0, \quad
f(\xi)\sim\left[\frac{m-1}{2m(m+1)}\right]^{1/(m-1)}\xi^{2/(m-1)}-\psi(\sigma)\xi^{(\sigma+2)/(m-p)}, \
\ {\rm as} \ \xi\to0,
\end{equation}
where $\psi(\sigma)$ is a coefficient depending on $\sigma$ such that
\begin{equation}\label{Beh01bis}
\lim\limits_{\sigma\to\infty}\psi(\sigma)=0.
\end{equation}
\end{lemma}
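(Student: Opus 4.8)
The plan is to run the same linearization-and-invariant-manifold analysis as in Lemmas~\ref{lem.1} and \ref{lem.2}, but now the structure at $P_2$ is cleaner because there is no center direction. First I would compute the Jacobian of \eqref{PSsyst2} at $P_2=(X_2,Y_2,0)$ with $X_2=(m-1)/(2m(m+1))$ and $Y_2=1/(m(m+1))$. Because $Z_2=0$ and the $\dot Z$-equation carries a global factor $Z$, the matrix $M(P_2)$ is block triangular: its $(3,3)$-entry is the single eigenvalue
\[
\lambda_3=\frac{2(p-1)+\sigma(m-1)}{2(m+1)}>0,
\]
while the remaining $2\times2$ block $A$ acts on the $(X,Y)$-coordinates. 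A direct computation gives $\operatorname{tr}A=-\frac{3m+1}{2(m+1)}-\beta<0$, and, using the relation $\alpha(m-1)=1+2\beta$ that expresses exactly that $P_2$ is a critical point (i.e. $\dot Y(P_2)=0$), the determinant collapses to $\det A=\frac{m-1}{2(m+1)}>0$. Hence both eigenvalues of $A$ have negative real part, so $P_2$ carries a two-dimensional stable manifold and a one-dimensional unstable manifold, as claimed.

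Next I would pin down the geometry of these manifolds. Since $\{Z=0\}$ is invariant and $A$ is precisely the linearization of the planar system obtained by restricting \eqref{PSsyst2} to $\{Z=0\}$, the two stable eigenvectors lie in $\{Z=0\}$; by invariance the entire stable manifold is contained in $\{Z=0\}$, giving the second assertion. For the unstable direction I would solve $(M(P_2)-\lambda_3 I)v=0$: the third component $v_Z$ is free, and because $\lambda_3$ is not an eigenvalue of $A$ the block $(A-\lambda_3 I)$ is invertible, so $(v_X,v_Y)$ is uniquely determined by $v_Z$. In particular $v_Z\neq0$, so the unstable manifold genuinely leaves $\{Z=0\}$; selecting the branch with $Z>0$ (the physically admissible half-space, invariant under the flow) singles out the \emph{unique} orbit going out of $P_2$ into the region of interest.

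It remains to translate this orbit into the profile asymptotics \eqref{Beh01}. Along the outgoing orbit $X\to X_2$ as $\eta\to-\infty$; using $d\eta/d\xi=\tfrac1m\xi f^{1-m}$ together with $f^{m-1}\sim X_2\xi^2$ one checks that $\eta\to-\infty$ corresponds to $\xi\to0^+$, whence $X=\xi^{-2}f^{m-1}\to X_2$ yields $f(0)=0$ and the leading term $f(\xi)\sim[(m-1)/(2m(m+1))]^{1/(m-1)}\xi^{2/(m-1)}$. For the correction I would insert the two-term ansatz $f=A\xi^{2/(m-1)}-\psi\,\xi^{\gamma}$ into \eqref{SSODE}: the three terms $(f^m)''$, $-\alpha f$ and $\beta\xi f'$ balance at order $\xi^{2/(m-1)}$ and fix $A$ (recovering $A^{m-1}=X_2$), whereas the weighted reaction $\xi^\sigma f^p$ is the first neglected term and furnishes the forcing that determines the subleading exponent $\gamma$ and the coefficient $\psi(\sigma)$ recorded in \eqref{Beh01}. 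Finally, tracking how $\psi$ depends on $\sigma$ through the indicial value of the linearized diffusion–transport operator evaluated at $\gamma$ — which grows without bound as $\sigma\to\infty$ — yields $\psi(\sigma)\to0$, that is \eqref{Beh01bis}. The main obstacle is precisely this last step: converting the qualitative convergence to $P_2$ in the phase space into the quantitative expansion \eqref{Beh01}, with explicit control of the next-order coefficient and of its limit as $\sigma\to\infty$; by comparison the eigenvalue bookkeeping of the first two paragraphs is routine.
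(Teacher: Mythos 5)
Your first two paragraphs (block-triangular Jacobian, $\lambda_3=\frac{2(p-1)+\sigma(m-1)}{2(m+1)}>0$, negative trace and determinant $\frac{m-1}{2(m+1)}$ of the $2\times2$ block via $\alpha(m-1)=1+2\beta$, stable manifold inside $\{Z=0\}$, unique outgoing orbit with $v_Z\neq 0$ restricted to $Z>0$) are correct and are essentially identical to the paper's argument. Where you genuinely diverge is the derivation of the second-order term in \eqref{Beh01}: the paper does \emph{not} do matched asymptotics in the ODE. It computes the unstable eigenvector $e_3=(x(\sigma),-1,z(\sigma))$ of $M(P_2)$ explicitly, reads off the relation $X-\frac{m-1}{2m(m+1)}\sim |\psi(\sigma)|^{m-p}Z$ along the outgoing orbit with $\psi(\sigma):=|x(\sigma)/z(\sigma)|^{1/(m-p)}$, and obtains \eqref{Beh01bis} immediately from the explicit formulas ($x(\sigma)\to0$, $z(\sigma)\to\infty$). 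That route has the advantage of producing a closed-form $\psi(\sigma)$ for free, which is exactly what \eqref{Beh01bis} needs.

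The gap in your version is concentrated in the last paragraph, and it is not merely that the computation is "left as the main obstacle". If you actually insert $f=A\xi^{2/(m-1)}-\psi\,\xi^{\gamma}$ into \eqref{SSODE}, the forcing term is $\xi^{\sigma}f^p\sim A^p\xi^{\sigma+2p/(m-1)}$, so the balance you describe forces
$$\gamma=\sigma+\frac{2p}{m-1}=\frac{\sigma(m-1)+2p}{m-1},$$
which coincides with the exponent $\frac{\sigma+2}{m-p}$ written in \eqref{Beh01} only when $m-p=1$ (one checks $(m-p-1)\bigl[\sigma(m-1)+2(p-1)\bigr]=0$ is the condition for equality). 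So you cannot simply assert that the ansatz "furnishes the subleading exponent $\gamma$ and the coefficient $\psi(\sigma)$ recorded in \eqref{Beh01}": either your computation must be reconciled with the stated exponent, or you must abandon the forced-balance route and instead, as the paper does, extract the second-order behavior from the eigendirection of $\lambda_3$ in the phase space (which also automatically rules out contamination by the stable modes, something a purely formal two-term ansatz does not address). Relatedly, your proof of \eqref{Beh01bis} is only a heuristic ("the indicial value grows without bound"): without an explicit expression for $\psi(\sigma)$ — which your sketch never produces — the limit $\psi(\sigma)\to0$ is not established, whereas in the paper it is a one-line consequence of the formulas for $x(\sigma)$ and $z(\sigma)$.
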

\begin{proof}
The linearization of the system \eqref{PSsyst2} near this critical
point has the matrix:
$$
M(P_2)=\left(
  \begin{array}{ccc}
    -\frac{m-1}{m+1} & \frac{(m-1)^2}{2(m+1)} & 0 \\
    \alpha-\frac{1}{m+1} & -\beta-\frac{m+3}{2(m+1)} & -\frac{m-1}{2m(m+1)} \\
    0 & 0 & \frac{2(p-1)+\sigma(m-1)}{2(m+1)} \\
  \end{array}
\right)
$$
having eigenvalues $\lambda_1$, $\lambda_2$ and
$$\lambda_3=\frac{2(p-1)+\sigma(m-1)}{2(m+1)}.$$ Moreover, it is
immediate to see that
$$
\lambda_1+\lambda_2=-\frac{m-1}{m+1}-\beta-\frac{m+3}{2(m+1)}<0
$$
and
\begin{equation*}
\begin{split}
\lambda_1\lambda_2&=\frac{m-1}{m+1}\left[\beta+\frac{m+3}{2(m+1)}\right]-\frac{(m-1)^2}{2(m+1)}\left[\alpha-\frac{1}{m+1}\right]\\
&=\frac{m-1}{2(m+1)}>0,
\end{split}
\end{equation*}
so that $\lambda_1<0$ and $\lambda_2<0$. Thus, there is a
two-dimensional stable manifold, composed by orbits entering $P_2$
and included in the invariant plane $Z=0$, and it is easy to check
that there exists only one orbit (for any $\sigma>0$ fixed) going
out of $P_2$ tangent to the eigenvector associated to the eigenvalue
$\lambda_3$. In the sequel, we will be interested in this unique
orbit. Let us notice first that this orbit contains profiles for
which
\begin{equation}\label{term1}
\lim\limits_{\xi\to0}\xi^{-2}f^{m-1}(\xi)=\frac{m-1}{2m(m+1)}.
\end{equation}
Moreover, elementary but rather tedious calculations show that the eigenvector of the matrix $M(P_2)$ corresponding to the eigenvalue $\lambda_3$ is $(x(\sigma),y,z(\sigma))$ with
\begin{eqnarray*}
&x(\sigma)=-\frac{(m-1)^2}{2(m+p-2)+\sigma(m-1)}, \quad y=-1,\\
&z(\sigma)=\frac{2m}{m-1}\left[-\frac{(\alpha(m+1)-1)(m-1)^2}{2(m+p-2)+\sigma(m-1)}+\frac{2(m+1)\beta+m+2p+1+\sigma(m-1)}{2}\right],
\end{eqnarray*}
where the choice of the signs was taken in order for $z(\sigma)>0$. In particular we infer that the components $X$ and $Y$ decrease very close to $P_2$ along the orbit going out of $P_2$. Defining $\psi(\sigma):=|x(\sigma)/z(\sigma)|^{1/(m-p)}$, we readily infer \eqref{Beh01bis}. Moreover, the second (algebraic) order in the local behavior of a profile $f$ near $P_2$ is given by the relation $$X-\frac{m-1}{2m(m+1)}\sim|\psi(\sigma)|^{m-p}Z$$ as $\xi\to0$, which taking into account the definitions of $X$ and $Z$ in \eqref{PSchange2} and the first term of the expansion given by \eqref{term1}, readily gives \eqref{Beh01}.
\end{proof}
\begin{lemma}[Analysis of the points $P_{\gamma}=(0,0,\gamma)$, for
$\gamma>0$]\label{lem.4} There exists a unique
\begin{equation}\label{eq.gamma}
\gamma_0=\alpha+\frac{\beta\sigma}{p-1}=\frac{1}{p-1}
\end{equation}
such that the critical point $P_{\gamma_0}$ is an attractor. The
orbits entering $P_{\gamma_0}$ contain profiles which decay at
infinity with the rate
\begin{equation}\label{queue}
f(\xi)\sim\left(\frac{1}{p-1}\right)^{1/(p-1)}\xi^{-\sigma/(p-1)},
\qquad {\rm as} \ \xi\to\infty.
\end{equation}
For $\gamma\in(0,\infty)$ with $\gamma\neq\gamma_0$ there is no orbit entering $P_{\gamma}$.
\end{lemma}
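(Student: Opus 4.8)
The plan is to read off the local structure from the linearization of system \eqref{PSsyst2} at $P_\gamma=(0,0,\gamma)$ and then, since that linearization turns out to be degenerate, to pass to the center manifold where the genuine dynamics lives. First I would compute the Jacobian $M(P_\gamma)$; ordering the variables as $(X,Y,Z)$ one finds that the $\dot X$-row and the third column vanish, so $M(P_\gamma)$ is block-triangular with characteristic polynomial $\lambda^2(\lambda+\beta)$. Hence every $P_\gamma$ has eigenvalues $0,0,-\beta$: a one-dimensional stable direction (eigenvalue $-\beta$, tangent to $(0,1,-m(p-1)\gamma/\beta)$) and a two-dimensional center subspace. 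The decisive algebraic observation is that the double eigenvalue $0$ is \emph{semisimple} if and only if $\gamma=\gamma_0:=\alpha+\beta\sigma/(p-1)$: solving $M(P_\gamma)v=0$ forces simultaneously $v_2=\frac{\alpha-\gamma}{\beta}v_1$ and $\sigma v_1+(p-1)v_2=0$, which are compatible with $v_1\neq0$ exactly when $\gamma=\gamma_0$; otherwise the geometric multiplicity drops to one and a nontrivial Jordan block appears. A one-line computation with \eqref{SSexp} (clearing the common denominator $2(p-1)+\sigma(m-1)$) confirms $\gamma_0=1/(p-1)$, which is \eqref{eq.gamma}.

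Next I would carry out the center manifold reduction following \cite[Section 2.12]{Pe}. After shifting $Z=\gamma+W$ I would parametrize the center manifold by $(X,W)$ and write $Y=h(X,W)$ with linear part $h=\frac{\alpha-\gamma}{\beta}X+O(|(X,W)|^2)$ dictated by the center subspace. Substituting into \eqref{PSsyst2} produces the reduced planar flow
\begin{equation*}
\dot X = A\,X^2+\cdots, \qquad \dot W = B\,X+\cdots, \qquad A=m\Big[\tfrac{(m-1)(\alpha-\gamma)}{\beta}-2\Big], \quad B=\tfrac{m\gamma(p-1)}{\beta}\,(\gamma_0-\gamma),
\end{equation*}
where the identity $(p-1)(\alpha-\gamma)+\beta\sigma=(p-1)(\gamma_0-\gamma)$ is what produces the factor $\gamma_0-\gamma$ in $B$. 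The whole lemma then hinges on the coefficient $B$, which vanishes precisely at $\gamma=\gamma_0$.

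I would then split into the two cases. For $\gamma\neq\gamma_0$ one has $B\neq0$: an orbit with $X>0$ that entered $P_\gamma$ would need $X(\eta)\to0$, but then $\dot W\sim B\,X$ forces $W$ to change monotonically while $\int X\,d\eta$ diverges (when $A<0$ one has $X\sim1/(|A|\eta)$, so $W$ grows like $\log\eta$; when $A>0$, $X$ is increasing and does not tend to $0$), so $Z=\gamma+W$ cannot converge to $\gamma$; hence no orbit from the physical region $\{X>0\}$ enters $P_\gamma$ (the orbits lying in the invariant plane $\{X=0\}$ do converge, but they correspond to $f\equiv0$ and are not genuine profiles). For $\gamma=\gamma_0$ the drift term is absent, $B=0$, and moreover the quadratic $WX$ term of $\dot W$ also cancels because $(p-1)\frac{\alpha-\gamma_0}{\beta}+\sigma=0$; since $A=-m\big[\tfrac{(m-1)\sigma}{p-1}+2\big]<0$, the reduced flow sends $X\to0$ and $W\to0$, so $P_{\gamma_0}$ attracts the nearby orbits and is the announced attractor. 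Finally, for an orbit entering $P_{\gamma_0}$ I would translate back through \eqref{PSchange2}: $Z=\xi^\sigma f^{p-1}\to\gamma_0=1/(p-1)$ (with $\xi\to\infty$, as $d\eta/d\xi\to\infty$ there) yields $f^{p-1}\sim\frac{1}{p-1}\xi^{-\sigma}$, i.e.\ the decay rate \eqref{queue}; as an independent check, the dominant balance in \eqref{SSODE} for $f\sim C\xi^{-a}$ gives $a=\sigma/(p-1)$ and $-\alpha C-a\beta C+C^p=0$, whence $C^{p-1}=\gamma_0$.

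The main obstacle is precisely the degeneracy of the linearization: the double zero eigenvalue means the point is never hyperbolic, so no eigenvalue sign can by itself decide whether $P_\gamma$ attracts or repels, and the distinction is carried entirely by the nonlinear reduced dynamics. The crux is therefore to compute the reduced $\dot W$ equation correctly and to recognize that the linear drift coefficient $B\propto(\gamma_0-\gamma)$ — together with the fact that the compensating quadratic term also vanishes exactly at $\gamma_0$ — is what flips the behavior between ``attractor'' and ``no orbit enters''. Handling the borderline sub-case $A=0$ for $\gamma\neq\gamma_0$ and checking that the higher-order terms do not resuscitate a convergent orbit is the only genuinely delicate book-keeping.
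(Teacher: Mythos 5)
Your overall strategy is the same as the paper's (center manifold reduction at $P_\gamma$, identification of $\gamma_0$ through the vanishing of the drift coefficient $B\propto\gamma_0-\gamma$, a divergence argument for $\gamma\neq\gamma_0$, and reading off the decay rate from $Z\to 1/(p-1)$), and the $\gamma\neq\gamma_0$ part and the derivation of \eqref{queue} are essentially correct. However, there is a genuine gap exactly at the central claim, namely that $P_{\gamma_0}$ is an attractor. You assert that at $\gamma=\gamma_0$ ``the quadratic $WX$ term of $\dot W$ also cancels because $(p-1)\tfrac{\alpha-\gamma_0}{\beta}+\sigma=0$.'' That identity only kills the contributions coming from the \emph{linear} part of the graph $Y=h(X,W)$; it does not account for the quadratic part of $h$. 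Writing $h(X,W)=\tfrac{\alpha-\gamma_0}{\beta}X+aX^2+bXW+O(3)$ and matching the $XW$ coefficients in the invariance equation $\tfrac{d}{d\eta}h=\dot Y|_{Y=h}$ gives $-\beta b-1=0$, i.e.\ $b=-1/\beta\neq0$ (the $-XZ$ term in the $\dot Y$ equation of \eqref{PSsyst2} is the source). Feeding this back into $\dot W=m(\gamma_0+W)[(p-1)h+\sigma X]$ yields an $XW$ coefficient equal to $m\gamma_0(p-1)b=-m/\beta<0$, which is precisely the paper's $D_1$. So the $XW$ term does \emph{not} cancel, and it is exactly its strict negativity that makes the point an attractor.

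The error is not cosmetic: if the reduced equation really were $\dot W=c_1X^2+O(3)$ with $\dot X=AX^2+\cdots$, $A<0$, then $X(\eta)\sim 1/(|A|\eta)$ and $\int\dot W\,d\eta$ would converge, so $W$ would tend to a generically \emph{nonzero} limit — the orbit would land on a neighboring critical point $P_{\gamma_0+W_\infty}$ of the line, and $P_{\gamma_0}$ would fail to be an attractor. Thus the inference ``$A<0$ implies $X\to0$ and $W\to0$'' is a non sequitur under your own (mis)computed reduced system. The fix is the paper's: compute the quadratic part of the center manifold, obtain $\dot W=D_2X^2+D_1XW+\cdots$ with $D_1=-m/\beta<0$, rescale time by $d\theta=X\,d\eta$, and conclude from the two negative eigenvalues $A$ and $D_1$ of the rescaled linearization (together with $\lambda_2=-\beta<0$ transverse to the center manifold) that $P_{\gamma_0}$ is an attractor. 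Two minor additional points: your semisimplicity observation is a nice way to \emph{guess} $\gamma_0$ but carries no weight in the proof, and the borderline case $A=0$ (which occurs at $\gamma=1/(m-1)\neq\gamma_0$) that you defer is handled uniformly in the paper by the same time rescaling, since after dividing by $X$ the origin is simply not a critical point when $B\neq0$.
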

\begin{proof}
The linearization of the system \eqref{PSsyst2} near this critical
point has the matrix:
$$
M(P_{\gamma})=\left(
  \begin{array}{ccc}
    0 & 0 & 0 \\
    \alpha-\gamma & -\beta & 0 \\
    m\sigma\gamma & m(p-1)\gamma & 0 \\
  \end{array}
\right)
$$
having a double eigenvalue $\lambda_1=\lambda_3=0$ and
$\lambda_2=-\beta$. That is, these points have a two-dimensional
center manifold and a one-dimensional stable manifold. The most
involved part is the analysis of the center manifold. Following once
more the recipe given in \cite[Section 2.12]{Pe}, we first change
the system in order to transfer the critical point $P_{\gamma}$ to
the origin by letting $Z=\bar{Z}+\gamma$. The new system writes:
\begin{equation}\label{interm3}
\left\{\begin{array}{ll}\dot{X}=mX[(m-1)Y-2X],\\
\dot{Y}=-mY^2-\beta Y+(\alpha-\gamma)X-mXY-X\bar{Z},\\
\dot{\bar{Z}}=m\bar{Z}[(p-1)Y+\sigma X]+m\sigma\gamma X+m(p-1)\gamma
Y.\end{array}\right.
\end{equation}
We now do a second change of variable by letting
$$
\bar{Z}=H-kY,
$$
for some $k$ to be determined later, in order to eliminate the
linear term in $Y$ from the third equation in the system
\eqref{interm3}. We thus get
\begin{equation*}
\begin{split}
\dot{H}&=\dot{\bar{Z}}+k\dot{Y}=m(H-kY)[(p-1)Y+\sigma
X]+m\sigma\gamma X+m(p-1)\gamma Y\\&-mkY^2-k\beta
Y+k(\alpha-\gamma)X-mkXY-kX(H-kY)\\&=[m\sigma\gamma+k(\alpha-\gamma)]X+[m(p-1)Y+(m\sigma-k)X]H-mkpY^2-k(m\sigma+m-k)XY,
\end{split}
\end{equation*}
provided
$$
k:=\frac{m(p-1)\gamma}{\beta}.
$$
With this notation, we finally do the last change of variable needed
in order to analyze the center manifold near $P_{\gamma}$, by
replacing $Y$ by the new variable
$$
G:=\beta Y-(\alpha-\gamma)X.
$$
We then calculate the new system by doing:
\begin{equation}\label{interm4}
\begin{split}
\dot{G}&=\beta\dot{Y}-(\alpha-\gamma)\dot{X}=-\beta
G-\frac{m}{\beta}G^2-\left[\frac{m(m+1)(\alpha-\gamma)}{\beta}+(m-k)\right]GX\\
&-(\alpha-\gamma)\left[\frac{m^2(\alpha-\gamma)}{\beta}-(m+k)\right]X^2-\beta
XH.
\end{split}
\end{equation}
The equation for $X$ writes in the new variables $X, G, H$:
\begin{equation}\label{interm5}
\dot{X}=\frac{m(m-1)}{\beta}XG+\left[\frac{m(m-1)(\alpha-\gamma)}{\beta}-2m\right]X^2,
\end{equation}
and finally, the equation satisfied by $H$ writes
\begin{equation}\label{interm6}
\dot{H}=[m\sigma\gamma+k(\alpha-\gamma)]X+D_1XH-D_2X^2+{\rm terms \ containing \ G},
\end{equation}
with
$$
D_1=\left[\frac{m(\alpha-\gamma)(p-1)}{\beta}+m\sigma-k\right], \ D_2=\frac{k(\alpha-\gamma)}{\beta}\left[\frac{mp(\alpha-\gamma)}{\beta}+m\sigma+m-k\right]
$$
where we stress that the terms containing $G$ are at least of quadratic order. We are now ready to apply Theorem 1 in
\cite[Section 2.12]{Pe} to the system formed by the equations (in
this order) \eqref{interm5}, \eqref{interm4} and \eqref{interm6}. To
this end, we look for a center manifold of the form
$$
G(X,H)=aX^2+bXH+cH^2+O(|(X,H)|^3),
$$
with coefficients $a$, $b$ and $c$ to be determined. After rather long calculations, the equation of the center manifold near the points
$P_{\gamma}$ becomes
\begin{equation}\label{interm7}
G(X,H)=DX^2-XH+O(|(X,H)|^3),
\end{equation}
where
$$
D:=-\frac{1}{\beta^2}\left[-2(\alpha-\gamma)\beta k-m\beta(\sigma\gamma+\alpha-\sigma)+m^2(\alpha-\gamma)^2\right].
$$
We now replace $G(X,H)$ from \eqref{interm7} into the equations
\eqref{interm5} and \eqref{interm6} and we infer that locally near the point $P_{\gamma}$ the flow on the
center manifold is given by the reduced system (where the particular form of the higher order terms comes from \eqref{interm5}, \eqref{interm6}, \eqref{interm7} and an easy proof by induction)
\begin{equation}\label{flowgamma}
\left\{\begin{array}{ll}\dot{X}=\left[\frac{m(m-1)(\alpha-\gamma)}{\beta}-2m\right]X^2+\frac{m(m-1)D}{\beta}X^3-\frac{m(m-1)}{\beta}X^2H+XO(|(X,H)|^3)\\
\dot{H}=\left[m\sigma\gamma+k(\alpha-\gamma)\right]X+D_1XH+D_2X^2+XO(|(X,H)|^2).\end{array}\right.
\end{equation}
For $\gamma=\gamma_0:=1/(p-1)$ the linear term in the second equation of \eqref{flowgamma} vanishes. Noticing that
$$
L:=\frac{m(m-1)(\alpha-\gamma_0)}{\beta}-2m=-m\frac{\sigma(m-1)+2(p-1)}{p-1}<0, \ D_1=-\frac{m}{\beta}<0,
$$
by changing the independent variable as
\begin{equation}\label{change}
d\theta=X d\eta,
\end{equation}
one can reduce the order of the system \eqref{flowgamma} dividing by $X$ in the right-hand side in the region $\{X>0\}$ and thus get a nonlinear system for which the linearization near the point $(0,0)$ has eigenvalues $L$ and $D_1$, both negative. This, together with the fact that in the matrix $M(P_{\gamma_0})$ we have $\lambda_2=-\beta<0$, prove that $P_{\gamma_0}$ is an attractor. By performing the same change of variable \eqref{change} for $\gamma\neq 1/(p-1)$, that is, $m\sigma\gamma+k(\alpha-\gamma)\neq0$, in the topologically equivalent system obtained $(0,0)$ is no longer a critical point, thus it is easy to verify that there is no connection from the phase plane into the point $P_{\gamma}$ for $\gamma\neq\gamma_0=1/(p-1)$. Coming back to profiles and noticing that $Z=1/(p-1)$ for all orbits entering $P_{\gamma_0}$, we find that all the orbits entering this attractor contain profiles supported in the whole $\real$ and decaying at infinity with the rate given in \eqref{queue}.
\end{proof}
These profiles might give us very interesting solutions to
\eqref{eq1} with this typical decay. In particular, for the
homogeneous case $\sigma=0$, this attractor corresponds to the
constant profile
$$
f\equiv\left(\frac{1}{p-1}\right)^{1/(p-1)},
$$
but let us notice that for $\sigma>0$, these profiles decay to 0 as
$\xi\to\infty$ and they will be considered in the sequel.

\medskip

\noindent \textbf{Local analysis of the critical points at space
infinity.} Apart from the finite critical points, there are several critical points at infinity for the system
\eqref{PSsyst2}. In order to study the critical points at infinity,
we first pass to the Poincar\'e hypersphere following the recipes
given for example in \cite[Section 3.10]{Pe}. In order to pass to
the Poincar\'e hypersphere, we pass to new variables
$(\overline{X},\overline{Y},\overline{Z},W)$ by letting:
$$
X=\frac{\overline{X}}{W}, \ \ Y=\frac{\overline{Y}}{W}, \ \
Z=\frac{\overline{Z}}{W}
$$
and according to \cite[Theorem 4, Section 3.10]{Pe}, the critical
points at infinity in the phase space associated to the system
\eqref{PSsyst2} lie on the Poincar\'e hypersphere at points
$(\overline{X},\overline{Y},\overline{Z},0)$ where
$\overline{X}^2+\overline{Y}^2+\overline{Z}^2=1$ and the following
system is fulfilled:
\begin{equation}\label{Poincare1}
\left\{\begin{array}{ll}\overline{X}Q_2(\overline{X},\overline{Y},\overline{Z})-\overline{Y}P_2(\overline{X},\overline{Y},\overline{Z})=0,\\
\overline{X}R_2(\overline{X},\overline{Y},\overline{Z})-\overline{Z}P_2(\overline{X},\overline{Y},\overline{Z})=0,\\
\overline{Y}R_2(\overline{X},\overline{Y},\overline{Z})-\overline{Z}Q_2(\overline{X},\overline{Y},\overline{Z})=0,\end{array}\right.
\end{equation}
where $P_2$, $Q_2$ and $R_2$ are the homogeneous second degree parts
of the polynomials in the right hand side of the system \eqref{PSsyst2},
that is
\begin{equation*}
\begin{split}
&P_2(\overline{X},\overline{Y},\overline{Z})=m\overline{X}((m-1)\overline{Y}-2\overline{X}),\\
&Q_2(\overline{X},\overline{Y},\overline{Z})=-m\overline{Y}^2-m\overline{X}\overline{Y}-\overline{X}\overline{Z},\\
&R_2(\overline{X},\overline{Y},\overline{Z})=m\overline{Z}((p-1)\overline{Y}+\sigma\overline{X})
\end{split}
\end{equation*}
The system \eqref{Poincare1} thus becomes
\begin{equation}\label{Poincare2}
\left\{\begin{array}{ll}\overline{X}(-m^2\overline{Y}^2+m\overline{X}\overline{Y}-\overline{X}\overline{Z})=0,\\
m\overline{X}\overline{Z}((\sigma+2)\overline{X}-(m-p)\overline{Y})=0,\\
\overline{Z}(mp\overline{Y}^2+m(\sigma+1)\overline{X}\overline{Y}+\overline{X}\overline{Z})=0,\end{array}\right.
\end{equation}
Taking into account that we are considering only points with
coordinates $\overline{X}\geq0$ and $\overline{Z}\geq0$, we find the
following five critical points at infinity (on the Poincar\'e
hypersphere):
$$
Q_1=(1,0,0,0), \ \ Q_{2,3}=(0,\pm1,0,0), \ \ Q_4=(0,0,1,0), \ \
Q_5=\left(\frac{m}{\sqrt{1+m^2}},\frac{1}{\sqrt{1+m^2}},0,0\right).
$$
We perform below the local analysis of these points following the recipes given in
\cite[Theorem 5, Section 3.10]{Pe}.
\begin{lemma}\label{lem.inf1}
The critical point at infinity represented as $Q_1=(1,0,0,0)$ in the
Poincar\'e hypersphere is an unstable node. The orbits going out of
this point to the finite part of the phase space contain profiles
$f(\xi)$ such that $f(0)=a>0$ with any possible behavior of the
derivative $f'(0)$.
\end{lemma}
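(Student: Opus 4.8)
The plan is to follow the standard Poincaré-hypersphere recipe for critical points at infinity, specializing the general linearization formula from \cite[Theorem 5, Section 3.10]{Pe} to the point $Q_1=(1,0,0,0)$. First I would set up the appropriate chart on the hypersphere: since $Q_1$ has $\overline{X}=1$ as its dominant coordinate, the correct projection is the one that solves for $X$ large, i.e.\ the chart $(y,z,w)$ defined by $X=1/w$, $Y=y/w$, $Z=z/w$ (with the sign/orientation conventions of \cite{Pe}). In this chart the point $Q_1$ becomes the origin $(y,z,w)=(0,0,0)$, and the system \eqref{PSsyst2} transforms, after multiplying by the appropriate power of $w$ to clear denominators and rescaling time, into a polynomial system whose linearization at the origin can be read off directly.

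The key computational step is to extract this linearization and show all its eigenvalues have the same sign (positive, after the orientation that makes orbits leave $Q_1$ toward the finite phase space), which is exactly what ``unstable node'' means. Concretely, I would compute the Jacobian of the transformed system at the origin; because $P_2,Q_2,R_2$ are the degree-two parts and $Q_1$ sits on the $\overline{X}$-axis, the leading behavior is governed by the $X^2$ and $XY$, $XZ$ terms in \eqref{PSsyst2}. The diagonal structure one expects is that the three eigenvalues are (up to a common positive factor coming from the time rescaling) essentially the coefficients measuring how $Y/X$, $Z/X$ and $1/X$ evolve: from $\dot{X}=mX[(m-1)Y-2X]$ one reads a rate $\sim 2m$ in the $w$-direction, while the ratios $Y/X$ and $Z/X$ contract or expand at rates determined by $(m-1)$ versus the coefficients in $\dot{Y}$ and $\dot{Z}$. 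I would verify that, in the range $1<p<m$, $\sigma>0$, all three eigenvalues are real and of one sign, so that $Q_1$ is a node and every nearby orbit on one side emanates from it.

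Having established the node structure, the second part of the statement concerns the profiles carried by the orbits leaving $Q_1$. Since $Q_1$ corresponds to $X=\xi^{-2}f^{m-1}\to\infty$ with $Y/X\to0$ and $Z/X\to0$, I would translate these limits back through the change of variables \eqref{PSchange2}. The condition $X\to\infty$ as one approaches the origin $\xi\to0$ forces $f^{m-1}/\xi^2\to\infty$, i.e.\ $f(\xi)$ does not vanish like $\xi^{2/(m-1)}$ but rather tends to a positive constant: $f(0)=a>0$. The fact that $Q_1$ is a \emph{node} (a whole sector of orbits, not a single trajectory) is precisely what allows the derivative $f'(0)$ to take any value, since $Y=\xi^{-1}f^{m-2}f'$ and the ratio $Y/X=\xi f'/f$ can approach $0$ along a one-parameter family of orbits corresponding to different slopes; I would make this correspondence explicit by integrating the leading-order relation near $\xi=0$.

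The main obstacle I anticipate is the bookkeeping in the Poincaré transformation itself: the recipe in \cite[Theorem 5]{Pe} prescribes a specific factor $w^{d-1}$ (with $d=2$ the degree) and a specific sign convention for the flow direction near the hypersphere, and it is easy to lose track of signs when deciding whether $Q_1$ is a source or a sink. I would therefore cross-check the eigenvalue signs against the geometric expectation — that orbits with $f(0)=a>0$ should \emph{originate} at the $X=\infty$ point as $\xi$ increases from $0$ — to fix the orientation unambiguously. A secondary delicate point is confirming that the two eigenvalues transverse to the $w$-direction are genuinely of the same sign (and nonzero) throughout the whole parameter range $1<p<m$, $\sigma>0$, rather than vanishing or changing sign at some threshold; this requires checking the relevant algebraic inequalities, but I expect them to hold cleanly given the structure of the coefficients.
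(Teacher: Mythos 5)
Your overall strategy is the same as the paper's: work in the chart $X=1/w$, $Y=y/w$, $Z=z/w$ prescribed by part (a) of \cite[Theorem 5, Section 3.10]{Pe}, read off the linearization at the origin, check that the three eigenvalues (the paper gets $m$, $m(\sigma+2)$ and $2m$) are all of one sign, and fix the orientation from the sign of $\dot X=mX[(m-1)Y-2X]$ near $Q_1$ (where $X/Y\to\infty$ forces $\dot X<0$). Up to that point the proposal is sound and matches the paper step for step, including the $2m$ rate you anticipate in the $w$-direction and the sign cross-check.

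There is, however, a genuine gap in your identification of the profiles. You argue that ``$X=\xi^{-2}f^{m-1}\to\infty$ forces $f^{m-1}/\xi^2\to\infty$, i.e.\ $f$ does not vanish like $\xi^{2/(m-1)}$ but rather tends to a positive constant.'' That inference is false: $X\to\infty$ is compatible with $f(0)=0$, as the point $Q_5$ shows, where $X\to\infty$ with $f(\xi)\sim K\xi^{1/m}$ (Lemma \ref{lem.inf3}); even adding $Y/X=\xi f'/f\to 0$ does not exclude, say, logarithmic vanishing of $f$. What actually pins down $f(0)=a>0$ in the paper is a quantitative consequence of the node structure you computed but did not use: from the eigenvalue ratio one gets $dz/dw\sim\frac{\sigma+2}{2}\,z/w$, hence $z\sim Cw^{(\sigma+2)/2}$, i.e.\ $Z/X\sim K X^{-(\sigma+2)/2}$; substituting the definitions \eqref{PSchange2} the powers of $\xi$ cancel and one is left with $f^{\,p-m+(m-1)(\sigma+2)/2}\to K$, so $f$ tends to a positive constant (the case $K=0$ being the orbit out of the finite point $P_2$ already treated in Lemma \ref{lem.3}). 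Your plan to ``integrate the leading-order relation'' is invoked only for the $f'(0)$ discussion, where it is indeed the right tool (the paper's remark derives $Y\sim KX^{1/2}+\alpha/m$ and hence all possible slopes at $\xi=0$); you need the analogous integration for the $(z,w)$ pair to justify $f(0)=a>0$ in the first place.
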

\begin{proof}
Applying part (a) of \cite[Theorem 5, Section 3.10]{Pe}, we obtain
that the flow of the system in a neighborhood of the point $Q_1$ is
topologically equivalent to the flow defined (in a neighborhood of
the origin) by the system:
\begin{equation}\label{systinf1}
\left\{\begin{array}{ll}-\dot{y}=-my+z-\alpha w+m^2y^2+\beta yw,\\
-\dot{z}=-m(\sigma+2)z+m(m-p)yz,\\
-\dot{w}=-2mw+m(m-1)yw,\end{array}\right.
\end{equation}
where the minus sign has been chosen according to the direction of
the flow in the original system \eqref{PSsyst2}. This follows for example from the
first equation in \eqref{PSsyst2}, which writes
$$
\dot{X}=mX[(m-1)Y-2X],
$$
and as $X/Y\to\infty$ when approaching $Q_1$, it follows that $\dot{X}<0$ in a neighborhood of the point $Q_1$, which gives the minus sign above. Thus, the linearization of the system \eqref{systinf1} in a neighborhood of the origin has the matrix
$$
M(Q_1)=\left(
         \begin{array}{ccc}
           m & -1 & \alpha \\
           0 & m(\sigma+2) & 0 \\
           0 & 0 & 2m \\
         \end{array}
       \right),
$$
showing that $Q_1$ is an unstable node. In order to classify the
profiles contained in the orbits going out of $Q_1$, we notice that,
in the variables of the equivalent system \eqref{systinf1},
$$
\frac{dz}{dw}\sim \frac{\sigma+2}{2}\frac{z}{w},
$$
whence by direct integration, $z\sim Cw^{(\sigma+2)/2}$ for some
$C\in\real$. Coming back to the original variables in the system
\eqref{PSsyst2} and recalling that the projection of the Poincar\'e
hypersphere in order to arrive to \eqref{systinf1} has been done by
dividing with the $X$ variable (as explained in \cite[Section
3.10]{Pe}), we infer
$$
\frac{Z}{X}\sim K\frac{1}{X^{(\sigma+2)/2}},
$$
whence by direct substitution in \eqref{PSchange2} we obtain
$$
f(\xi)\sim K>0, \quad {\rm as} \ \xi\to0,
$$
as stated. The fact that we have to choose $K>0$ above is due to the
fact that the profiles resulting from taking $K=0$ are already
classified in Lemma \ref{lem.3} and they go out of the finite point
$P_2$.
\end{proof}
\noindent \textbf{Remark.} We can delve deeper into the local
behavior as $\xi\to0$ of the profiles going out of $Q_1$. Since
$z\sim Cw^{(\sigma+2)/2}$, we readily get that
$$
\frac{dy}{dw}\sim\frac{1}{2}\frac{y}{w}+\frac{\alpha}{2m},
$$
whence by substitution in $X$, $Y$, $Z$ variables we get
\begin{equation}\label{intermY}
Y\sim KX^{1/2}+\frac{\alpha}{m}, \qquad K\in\real
\end{equation}
Thus, when the constant $K\neq0$, we have profiles with $Y\sim
KX^{1/2}$, thus
$f(\xi)\sim(C+K\xi)^{2/(m-1)}$ as $\xi\to0$, with non-zero
derivative at $\xi=0$. On the other hand, when $K=0$ in
\eqref{intermY}, we get that $Y\sim\alpha/m$, whence by direct
integration
$$
f(\xi)\sim\left(K+\frac{\alpha(m-1)}{2m}\xi^2\right)^{1/(m-1)},
\qquad K>0,
$$
thus obtaining here the profiles with initial conditions
$f(0)=a=K^{1/(m-1)}>0$, $f'(0)=0$ that are interesting for our
goals.
\begin{lemma}\label{lem.inf2}
The critical points at infinity represented as
$Q_{2,3}=(0,\pm1,0,0)$ in the Poincar\'e hypersphere are an unstable
node, respectively a stable node. The orbits going out of $Q_2$ to
the finite part of the phase space contain profiles $f(\xi)$ such
that there exists $\xi_0\in(0,\infty)$ with $f(\xi_0)=0$,
$f'(\xi_0)>0$. The orbits entering the point $Q_3$ and coming from
the finite part of the phase space contain profiles $f(\xi)$ such
that there exists $\xi_0\in(0,\infty)$ with $f(\xi_0)=0$,
$f'(\xi_0)<0$.
\end{lemma}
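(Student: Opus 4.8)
The plan is to treat $Q_2$ and $Q_3$ exactly as the point $Q_1$ was treated in Lemma \ref{lem.inf1}, but using the Poincar\'e chart adapted to the $Y$-direction rather than the $X$-direction. Concretely, I would apply the appropriate part of \cite[Theorem 5, Section 3.10]{Pe}, which here amounts to introducing the local coordinates
$$
u=\frac{X}{Y}, \qquad v=\frac{Z}{Y}, \qquad w=\frac{1}{Y},
$$
so that both $Q_2$ and $Q_3$ are carried to the origin $(u,v,w)=(0,0,0)$, with $w\to0^+$ corresponding to $Q_2$ (where $Y\to+\infty$) and $w\to0^-$ to $Q_3$ (where $Y\to-\infty$). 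Substituting into \eqref{PSsyst2} and rescaling the independent variable by $d\tau=w^{-1}d\eta$ in order to desingularize the resulting field (the degree-two Poincar\'e rescaling), a direct computation should yield an autonomous quadratic system each of whose three right-hand sides carries a common factor $u$, $v$, $w$ respectively, so that the linearization at the origin is diagonal,
$$
M(Q_{2,3})=\left(\begin{array}{ccc} m^2 & 0 & 0 \\ 0 & mp & 0 \\ 0 & 0 & m \end{array}\right),
$$
with the three eigenvalues $m^2$, $mp$ and $m$ all strictly positive (recall $m>1$ and $p>1$).

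Since the rescaling factor $w^{-1}$ is positive at $Q_2$, the orientation of the flow is preserved there and the positivity of all three eigenvalues shows that $Q_2$ is an unstable node, as claimed. For $Q_3$ the same algebraic computation applies verbatim, but now $w<0$, so the time rescaling reverses the orientation; equivalently, one invokes the antipodal symmetry of the Poincar\'e sphere, which for a quadratic (even degree) field reverses the flow between antipodal critical points at infinity. Either way, $Q_3$ inherits three ``positive'' eigenvalues read in reversed time and is therefore a stable node. This settles the first assertion.

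For the profiles it suffices to use the relation $Y=\xi^{-1}f^{m-2}f'$ from \eqref{PSchange2}: as long as $f>0$ one has $\operatorname{sign}Y=\operatorname{sign}f'$, so an orbit leaving $Q_2$ (where $Y\to+\infty$) carries a profile with $f'>0$, while an orbit entering $Q_3$ (where $Y\to-\infty$) carries a profile with $f'<0$. To see that the corresponding zero of $f$ is an interface at finite $\xi_0\in(0,\infty)$, I would combine $d\tau=w^{-1}d\eta=Y\,d\eta$ with $d\eta=\tfrac1m\xi f^{1-m}\,d\xi$ to obtain the clean identity $d\tau=\tfrac1m\,d(\ln f)$, whence $f\sim C\,e^{m\tau}\to0$ precisely as the critical point is approached ($\tau\to-\infty$). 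Along a generic orbit the approach is tangent to the slowest direction, namely the $w$-direction with eigenvalue $m$, so $w\sim A\,e^{m\tau}$ and therefore $Yf\to A^{-1}C\neq0$; since $Yf=\tfrac1m\xi^{-1}(f^m)'$, this forces $(f^m)'(\xi_0)\neq0$ and $f(\xi)\sim c|\xi-\xi_0|^{1/m}$ as $\xi\to\xi_0$, with $c>0$. Thus $f$ vanishes at a finite $\xi_0>0$ with a one-sided derivative of the announced sign (infinite in modulus), so these are genuine interfaces crossed with \emph{nonzero} flux, in sharp contrast with the good, zero-flux interfaces emanating from $P_1$ in Lemma \ref{lem.2}.

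The routine part is the node classification, which reduces entirely to the diagonal linearization above. The two delicate points that I would treat with care are, first, the correct bookkeeping of the rescaling sign that distinguishes the source $Q_2$ from the sink $Q_3$ (most safely phrased through the antipodal symmetry for even-degree fields), and, second, the verification that the limit $\xi_0$ is finite and strictly positive rather than $0$ or $+\infty$. The latter is the main obstacle: it is not automatic from the local linear picture and requires the dominant-balance argument giving $(f^m)'\to$ const together with $\xi-\xi_0\sim c^{-m}f^m\to0$, which confines $\xi$ to a bounded positive range as the interface is approached.
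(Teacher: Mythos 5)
Your proposal is correct and follows essentially the same route as the paper: the Poincar\'e chart in the $Y$-direction yields the diagonal linearization with entries $m^2$, $mp$, $m$, the orientation bookkeeping (via the sign of $w=1/Y$, equivalently the paper's observation that $\dot Y<0$ near $Q_{2,3}$) makes $Q_2$ a source and $Q_3$ a sink, and the translation back to profiles gives a zero of $f$ at finite $\xi_0>0$ with $(f^m)'(\xi_0)\neq0$, i.e.\ $f\sim c|\xi-\xi_0|^{1/m}$, exactly the paper's expansion \eqref{interm13}. Your identity $d\tau=\tfrac1m\,d(\ln f)$ is a slightly cleaner way to reach that local behavior than the paper's integration of $dx/dw\sim mx/w$, but the content is the same.
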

By convention, we will say that all these profiles are of
\emph{changing sign} type.
\begin{proof}
Applying part (b) of \cite[Theorem 5, Section 3.10]{Pe}, we obtain
that the flow of the system in a neighborhood of the points
$Q_{2,3}$ is topologically equivalent to the flow defined (in a
neighborhood of the origin) by the system:
\begin{equation}\label{systinf2}
\left\{\begin{array}{ll}\pm\dot{x}=-m^2x-\beta xw+mx^2+\alpha x^2w-x^2z,\\
\pm\dot{z}=-mpz-m(\sigma+1)xz-\beta zw+\alpha xzw-xz^2,\\
\pm\dot{w}=-mw-\beta w^2-mxw+\alpha xw^2-xzw.\end{array}\right.
\end{equation}
Since approaching the points $Q_{2,3}$ we have $|Y/X|\to\infty$ and $|Y/Z|\to\infty$, it follows from the second equation of the original system \eqref{PSsyst2}, that is,
$$
\dot{Y}=-mY^2-\beta Y-mXY+\alpha X-XZ,
$$
that in a neighborhood of the points $Q_2$ and $Q_3$, $\dot{Y}<0$, whence $Y$ is decreasing. This gives the direction of the flow near these points and shows that in the previous system \eqref{systinf2} we have to choose the minus sign for the point $Q_2=(0,1,0,0)$ and the plus sign for $Q_3=(0,-1,0,0)$. The matrix of the linearization of the system \eqref{systinf2} in a neighborhood of the origin is
$$
M(Q_{2,3})=\left(
             \begin{array}{ccc}
               -m^2 & 0 & 0 \\
               0 & -mp & 0 \\
               0 & 0 & -m \\
             \end{array}
           \right),
$$
hence by the previous choice of signs, $Q_2$ is an unstable node and
$Q_3$ is a stable node. In order to classify the profiles contained
in the orbits going out of $Q_2$ (respectively entering $Q_3$), we
notice that, in the variables of the equivalent system
\eqref{systinf2},
$$
\frac{dx}{dw}\sim m\frac{x}{w},
$$
whence by direct integration, $x\sim Cw^m$ for some $C>0$. Coming
back to the original variables in the system \eqref{PSsyst2} and
recalling that the projection of the Poincar\'e hypersphere in order
to arrive to \eqref{systinf2} has been done by dividing with the $Y$
variable (as explained in \cite[Section 3.10]{Pe}), we infer
$$
\frac{X}{Y}\sim C\frac{1}{Y^m},
$$
and by direct integration we obtain
\begin{equation}\label{interm13}
f(\xi)\sim\left[K+C\xi^{2m/(m-1)}\right]^{1/m}, \quad K, \
C\in\real.
\end{equation}
Let us notice that for the orbits entering $Q_3$, $Y(\xi)<0$ in a
neighborhood of it, which means that the profiles contained in such
orbits have $f'<0$ and $C<0$, thus compulsory $K>0$ in the formula
\eqref{interm13}. Then there exists $\xi_0\in(0,\infty)$ such that
$f(\xi_0)=0$, $f'(\xi_0)<0$. On the other hand, the profiles going
out of $Q_2$ do it with $Y>0$ (although decreasing), hence $C>0$ in
\eqref{interm13}. Thus, for part of these profiles, more precisely
those with $K<0$ in \eqref{interm13}, there exists
$\xi_0\in(0,\infty)$ such that $f(\xi_0)=0$, $f'(\xi_0)>0$, as
stated.
\end{proof}
We next analyze locally the flow in the phase space system near
$Q_5$.
\begin{lemma}\label{lem.inf3}
The critical point at infinity represented as
$Q_5=(m/\sqrt{1+m^2},1/\sqrt{1+m^2},0,0)$ in the Poincar\'e
hypersphere has a two-dimensional unstable manifold and a
one-dimensional stable manifold. The orbits going out from this
point into the finite region of the phase space contain profiles
satisfying $f(0)=0$ and $f(\xi)\sim K\xi^{1/m}$ as $\xi\to0$ in a
right-neighborhood of $\xi=0$.
\end{lemma}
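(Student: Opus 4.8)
The plan is to reuse the Poincar\'e chart from part (a) of \cite[Theorem 5, Section 3.10]{Pe} that was already set up for $Q_1$ in Lemma \ref{lem.inf1}, since $Q_5$ also has $\overline{X}\neq0$ (indeed $\overline{X}=m/\sqrt{1+m^2}$ dominates the other coordinates for $m>1$). In the variables $(y,z,w)=(Y/X,Z/X,1/X)$ of the topologically equivalent system \eqref{systinf1}, the point $Q_5$ is \emph{not} the origin but the finite point $(y,z,w)=(1/m,0,0)$, because at $Q_5$ one has $\overline{Y}/\overline{X}=1/m$ and $\overline{Z}=W=0$. First I would confirm this by setting $z=w=0$ in \eqref{systinf1}: the first equation reduces to $my(my-1)=0$, whose nonzero root $y=1/m$ is exactly $Q_5$ (the root $y=0$ being $Q_1$).

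The classification then reduces to linearizing \eqref{systinf1} at $(1/m,0,0)$. The direction of the flow is fixed as in Lemma \ref{lem.inf1}: along an orbit approaching $Q_5$ we have $Y=X/m$, hence $(m-1)Y-2X=-\tfrac{m+1}{m}X<0$, so $\dot X<0$ and the minus sign in \eqref{systinf1} is the correct one. A direct computation of the Jacobian at $(1/m,0,0)$ yields an upper triangular matrix with diagonal entries $-m$, $m\sigma+m+p$ and $m+1$; thus the eigenvalues are $-m<0$ together with the two positive numbers $m\sigma+m+p>0$ and $m+1>0$. This gives precisely a two-dimensional unstable manifold and a one-dimensional stable manifold, as claimed.

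It remains to identify the profiles carried by the orbits leaving $Q_5$ towards the finite region. The decisive observation is that at $Q_5$ the ratio $Y/X$ equals $1/m$; recalling the definitions in \eqref{PSchange2},
\begin{equation*}
\frac{Y}{X}=\frac{\xi^{-1}f^{m-2}f'}{\xi^{-2}f^{m-1}}=\frac{\xi f'}{f}\longrightarrow\frac{1}{m},
\end{equation*}
so integrating $f'/f\sim 1/(m\xi)$ yields $f(\xi)\sim K\xi^{1/m}$ with $K>0$ and, in particular, $f(0)=0$. I would then check the consistency of this behavior with the approach to $Q_5$ as $\xi\to0$: one has $X=\xi^{-2}f^{m-1}\sim K^{m-1}\xi^{-(m+1)/m}\to\infty$ and $Z/X=\xi^{\sigma+2}f^{p-m}\sim K^{p-m}\xi^{\sigma+1+p/m}\to0$, matching the coordinates $w=1/X\to0$ and $z=Z/X\to0$ of $Q_5$.

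The main obstacle I anticipate is not the linearization (which is routine) but the correct bookkeeping of the chart: one must recognize that $Q_5$ appears \emph{away} from the origin in the part (a) coordinates, and that the Poincar\'e time rescaling (which stretches the finite $\xi$-interval near $\xi=0$ into an infinite flow-time interval) is what makes $Q_5$ a genuine limit point of these orbits even though the original similarity variable $\eta$ stays bounded as $\xi\to0$. Once these identifications are made, the extraction of the local profile behavior $f(\xi)\sim K\xi^{1/m}$ follows directly from the ratio $Y/X=1/m$, and the manifold dimensions follow from the signs of the three eigenvalues.
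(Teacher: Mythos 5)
Your proposal is correct and follows essentially the same route as the paper: the same chart \eqref{systinf1} with $Q_5$ identified as the point $(y,z,w)=(1/m,0,0)$, the same sign choice from $\dot{X}<0$, the same (upper triangular) linearization matrix with eigenvalues $-m$, $m(\sigma+1)+p$, $m+1$, and the same extraction of $f(\xi)\sim K\xi^{1/m}$ from the relation $Y/X\to 1/m$. Your closing consistency check ($X\to\infty$, $Z/X\to0$ as $\xi\to0$) is a nice substitute for the paper's brief eigenvector remark showing the unstable orbits head into the finite region.
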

\begin{proof}
Using once more the results in \cite[Section 3.10]{Pe}, we deduce
that the flow in a neighborhood of the point $Q_5$ is topologically
equivalent to the flow of the same system \eqref{systinf1} but this
time in a neighborhood of the critical point with coordinates
$(y,z,w)=(1/m,0,0)$ in the notation of \eqref{systinf1}. Moreover,
since $X\sim mY$ when approaching $Q_5$,
$$
\dot{X}=mX[(m-1)Y-2X]\sim-m^2(m+1)Y^2<0,
$$
hence we have to choose again the minus sign in the system
\eqref{systinf1}. The linearization of \eqref{systinf1} near $Q_5$
has thus the matrix
$$
M(Q_5)=\left(
         \begin{array}{ccc}
           -m & -1 & -\beta/m+\alpha \\
           0 & m(\sigma+1)+p & 0 \\
           0 & 0 & m+1 \\
         \end{array}
       \right),
$$
whence we obtain a two-dimensional unstable manifold and a
one-dimensional stable manifold. An easy analysis of the
eigenvectors of the matrix $M(Q_5)$ shows that the orbits going out
from $Q_5$ on the unstable manifold go to the finite part of the
phase-space. Passing now to profiles, since $X\sim mY$ in
a neighborhood of $Q_5$, whence by \eqref{PSchange2},
$$
f^{m-1}(\xi)\xi^{-2}\sim mf^{m-2}(\xi)f'(\xi)\xi^{-1},
$$
and by direct integration we obtain
$$
f(\xi)\sim K\xi^{1/m} \ \ {\rm as} \ \xi\to0, \qquad {\rm for} \
K>0,
$$
as stated.
\end{proof}
We still have to analyze the orbits connecting to or from the critical point $Q_4$. This point is a non-hyperbolic one and the local analysis near it might be a difficult task if performed in standard ways. However, we can conclude that \emph{there are no orbits} entering $Q_4$ from the finite part of the phase space or going out of $Q_4$ into the finite part of the phase space. This is a consequence of the following result.
\begin{lemma}\label{lem.Q4}
There are no solutions to \eqref{SSODE} such that
\begin{equation}\label{interm34}
\lim\limits_{\xi\to\infty}\xi^{\sigma}f(\xi)^{p-1}=+\infty.
\end{equation}
\end{lemma}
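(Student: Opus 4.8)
The plan is to argue by contradiction: assume $f$ solves \eqref{SSODE} and satisfies \eqref{interm34}. Since $\xi^{\sigma}f(\xi)^{p-1}\to+\infty$, necessarily $f(\xi)>0$ for all large $\xi$, so fix $\xi_0$ with $f>0$ on $[\xi_0,\infty)$. First I would show that $f$ is eventually monotone. At any critical point $\xi_1>\xi_0$ one has, from \eqref{SSODE}, $(f^m)''(\xi_1)=f(\xi_1)[\alpha-\xi_1^{\sigma}f(\xi_1)^{p-1}]<0$, because the bracket is negative once $\xi_1$ is large; since $(f^m)''=m(m-1)f^{m-2}(f')^2+mf^{m-1}f''$ reduces to $mf^{m-1}f''$ at a critical point, this forces $f''(\xi_1)<0$. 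Hence every critical point beyond $\xi_0$ is a strict local maximum, which precludes local minima and therefore leaves at most one critical point; consequently $f$ is monotone near infinity.

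Next I would integrate the equation. Writing $\beta\xi f'=\beta(\xi f)'-\beta f$ and integrating \eqref{SSODE} on $[\xi_0,R]$ gives
\[
(f^m)'(R)=C_0+(\alpha+\beta)\int_{\xi_0}^R f\,d\xi-\beta R f(R)-\int_{\xi_0}^R \xi^{\sigma}f^p\,d\xi,
\]
with $C_0$ a constant. The hypothesis \eqref{interm34} means $\xi^{\sigma}f^p=(\xi^{\sigma}f^{p-1})\,f\ge N f$ for $\xi\ge\xi_N$ and any prescribed $N$; choosing $N>\alpha+\beta$ yields
\[
(f^m)'(R)\le C_N-(N-\alpha-\beta)\int_{\xi_N}^R f\,d\xi-\beta R f(R).
\]
If $f$ is eventually increasing, or decreasing to a positive limit, or decreasing to $0$ but with $\int^{\infty}f=\infty$, then $\int_{\xi_N}^R f\to\infty$, hence $(f^m)'(R)\to-\infty$ and so $f^m(R)\to-\infty$, contradicting $f\ge0$. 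This disposes of every case but one.

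The remaining delicate case is $f$ decreasing to $0$ with $f$ integrable at infinity; then $Rf(R)\to0$, $\int^{\infty}\xi^{\sigma}f^p<\infty$, and one checks this forces $\sigma>p-1$. To treat it I would rescale by the critical decay rate of \eqref{queue}, setting $\phi(\xi):=\xi^{\sigma/(p-1)}f(\xi)$, so that $\xi^{\sigma}f^{p-1}=\phi^{p-1}$ and \eqref{interm34} becomes $\phi\to+\infty$. Multiplying \eqref{SSODE} by $\xi^{\sigma/(p-1)}$ and using the identity $\alpha+\tfrac{\sigma}{p-1}\beta=\tfrac{1}{p-1}$ recorded in \eqref{eq.gamma}, the profile equation turns into
\[
\beta\xi\phi'=\frac{1}{p-1}\phi-\phi^p-\xi^{\sigma/(p-1)}(f^m)'',
\]
where the diffusion remainder carries the prefactor $\xi^{-[\sigma(m-1)/(p-1)+2]}\to0$. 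In the variable $s=\log\xi$ the leading dynamics read $\beta\dot\phi=\tfrac{1}{p-1}\phi-\phi^p$, for which $\dot\phi<0$ once $\phi>(1/(p-1))^{1/(p-1)}$; thus $\phi$ is pushed back and cannot escape to $+\infty$, contradicting $\phi\to\infty$ and completing the proof.

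The main obstacle is this last step: certifying that the diffusion remainder is genuinely of lower order. Since a priori $\phi\to\infty$, the terms $\phi^{m-1}\ddot\phi$ and $\phi^{m-2}\dot\phi^2$ hidden inside $\xi^{\sigma/(p-1)}(f^m)''$ need not be small merely because of the vanishing prefactor. I expect to close this by a bootstrap: the integrability of $\xi^{\sigma}f^p$ first gives the crude bound $f(\xi)=o(\xi^{-(\sigma+1)/p})$, hence an explicit polynomial ceiling $\phi=o(\xi^{c_0})$ with $c_0=(\sigma-p+1)/(p(p-1))$ small; feeding this back into the rescaled equation to control $\dot\phi$ and $\ddot\phi$ by $O(\phi)$ then shows $\xi^{\sigma/(p-1)}(f^m)''\to0$, after which the reduced autonomous dynamics deliver the contradiction. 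Making the control of these derivative terms rigorous is the crux of the argument.
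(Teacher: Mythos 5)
Your first two steps are sound, and the integration-by-parts step is in fact a cleaner route than the paper's for the cases it covers: the paper disposes of $f\to L\in(0,\infty)$ and $f\to+\infty$ separately (its Steps 2 and 3), invoking a calculus lemma to produce a subsequence along which $(f^m)''(\xi_n)\to0$ and $\xi_nf'(\xi_n)\to0$, whereas your single inequality
\[
(f^m)'(R)\le C_N-(N-\alpha-\beta)\int_{\xi_N}^Rf\,d\xi-\beta Rf(R)
\]
forces $(f^m)'(R)\to-\infty$, hence $f^m(R)\to-\infty$, in every case where $\int^{\infty}f=+\infty$, all at once. The monotonicity argument via strict local maxima is equivalent to the paper's argument via local minima.

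The problem is the residual case, $f$ decreasing to $0$ with $\int^{\infty}f<\infty$: this is not proved, and the bootstrap you sketch does not close. A pointwise ceiling $\phi=o(\xi^{c_0})$ gives no control whatsoever on $\dot\phi$ or $\ddot\phi$ (a small function can oscillate with huge derivatives), and you cannot extract such control from the rescaled equation either, because that equation expresses the diffusion remainder $\xi^{\sigma/(p-1)}(f^m)''$ in terms of $\beta\xi\phi'$ and $\phi$ --- the bound you need on $\dot\phi$ is exactly the unknown you are solving for, so the argument is circular. The statement ``the reduced autonomous dynamics deliver the contradiction'' is therefore unsupported; as written, this case is an open hole, not a technicality. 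For comparison, the paper closes it by an elementary device that avoids estimating derivatives asymptotically: since $f$ decreases to zero it cannot satisfy $f''<0$ on a whole neighborhood of infinity (otherwise $f'$ would be bounded away from $0$ from above and $f$ would become negative), so there is a sequence of intervals $[\xi_n^1,\xi_n^2]$ with $\xi_n^1\to\infty$ on which $f''\ge0$, hence $(f^m)''\ge mf^{m-1}f''\ge0$; there \eqref{SSODE} gives
\[
\left[\xi^{\sigma}f(\xi)^{p-1}-\alpha\right]f(\xi)+\beta\xi f'(\xi)\le0,
\qquad\text{i.e.}\qquad \frac{f'(\xi)}{f(\xi)}\le-\frac{K}{\beta\xi}
\]
with $K$ arbitrarily large by \eqref{interm34}, which is incompatible with the lower bound $f(\xi)\gg\xi^{-\sigma/(p-1)}$ that \eqref{interm34} itself imposes. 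You would need either to import an argument of this type for your last case or to supply a genuine derivative estimate before the perturbation argument can be run.
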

\begin{proof}
Assume for contradiction that \eqref{SSODE} admits such a solution $f$. As an immediate remark, it follows that $f(\xi)>0$ for any $\xi\in(R,\infty)$ with some $R>0$ sufficiently large. The rest of the proof is divided into several steps.

\medskip

\noindent \textbf{Step 1.} In a first step we show that $f$ is monotonic in a neighborhood of infinity. Indeed, assume that $(\xi_{0,n})_{n\geq1}$ is a sequence of local minima for $f$ such that $\xi_{0,n}\to\infty$ as $n\to\infty$. Evaluating \eqref{SSODE} at $\xi=\xi_{0,n}$ and taking into account that $f'(\xi_{0,n})=0$, $(f^m)''(\xi_{0,n})\geq0$, we infer that
$$
\xi_{0,n}^{\sigma}f(\xi_{0,n})^p\leq\alpha f(\xi_{0,n}), \quad n\geq1,
$$
whence $\xi_{0,n}^{\sigma}f(\xi_{0,n})^{p-1}\leq\alpha$, which contradicts \eqref{interm34} as we assumed that the sequence $(\xi_{0,n})_{n\geq1}$ was unbounded. It thus follows that there exists $R>0$ such that $f$ has no local minima in $(R,\infty)$, that is, it is either increasing or having at some point a local maxima and then becoming decreasing in a neighborhood of infinity.

\medskip

\noindent \textbf{Step 2.} Step 1 implies that there exists $\lim\limits_{\xi\to\infty}f(\xi)=:L\geq0$. If $L\in(0,\infty)$, it follows by applying twice a standard calculus lemma (stated explicitly in \cite[Lemma 2.9]{IL13a}) either to the function $f(\xi)-L$ if $f$ decreasing near infinity or to the function $L-f(\xi)$ if $f$ increasing near infinity, that there exists a subsequence $\{\xi_n\}_{n\geq1}$ such that
$$
\lim\limits_{n\to\infty}(f^m)''(\xi_n)=\lim\limits_{n\to\infty}\xi_nf'(\xi_n)=0, \quad \lim\limits_{n\to\infty}\xi_n=\infty.
$$
Evaluating then \eqref{SSODE} at $\xi=\xi_n$ for any $n\geq1$, we get that
$$
\lim\limits_{n\to\infty}(\xi_n^{\sigma}f(\xi_n)^{p}-\alpha f(\xi_n))=0,
$$
which is once more a contradiction with the assumption \eqref{interm34}.

\medskip

\noindent \textbf{Step 3.} If $L=+\infty$, then we infer from Step 1 that $f$ is increasing on an interval $(R,\infty)$, and we further deduce from \eqref{SSODE} and \eqref{interm34} that $(f^m)''(\xi)\to-\infty$ as $\xi\to\infty$. By straightforward calculus techniques we reach a contradiction.

\medskip

\noindent \textbf{Step 4.} If $L=0$, then $f(\xi)$ decreases to zero in an interval of the form $(R,\infty)$ according to Step 1. Since
$$
(f^m)''(\xi)=mf^{m-1}(\xi)f''(\xi)+m(m-1)f^{m-2}(\xi)f'(\xi)^2\geq mf^{m-1}(\xi)f''(\xi),
$$
there exists a sequence of intervals $[\xi_n^1,\xi_n^2]$ with $\xi_n^1\to\infty$ as $n\to\infty$, such that $f''(\xi)\geq0$ in $[\xi_n^1,\xi_n^2]$ (and the same for $(f^m)''(\xi)$), which together with \eqref{SSODE} gives
\begin{equation}\label{interm35}
\left[\xi^{\sigma}f(\xi)^{p-1}-\alpha\right]f(\xi)+\beta\xi f'(\xi)\leq0, \quad \xi\in[\xi_n^1,\xi_n^2].
\end{equation}
We further derive from \eqref{interm34} and \eqref{interm35} that for any $K>0$ large there exists an integer $n=n(K)$ such that
$$
\frac{f'(\xi)}{f(\xi)}\leq-\frac{K}{\beta\xi}, \quad \xi\in[\xi_n^1,\xi_n^2], \ n\geq n(K),
$$
but it can be easily proved that the latter contradicts the fact that $f(\xi)$ decays to zero slower than a fixed tail.
\end{proof}
Going back to the phase space in variables $(X,Y,Z)$, a profile contained in an orbit entering $Q_4$ means that $Z=\xi^{\sigma}f^{p-1}(\xi)\to\infty$ along this orbit, and such an orbit cannot contain any profile according to Lemma \ref{lem.Q4}. We are now prepared to perform the global analysis of the phase space and prove our main results.

\section{Existence of compactly supported self-similar
solutions}\label{sec.exist}

This section is devoted to the proof of Theorem \ref{th.exist}. We
will thus consider good profiles with interface according to
Definition \ref{def1}, either satisfying property (P1) or property
(P2). Let us recall that for the homogeneous case $\sigma=0$,
existence (and uniqueness in the one-dimensional case) of such
profiles is well-known, see for example \cite[Theorem 2, p.
187]{S4}. In this case, in dimension $N=1$ there exists a unique
compactly supported profile $f$ such that $f(0)>0$, $f'(0)=0$ and
$f$ strictly decreasing in $(0,\xi_0)$, where $\xi_0\in(0,\infty)$
is the first point such that $f(\xi_0)=0$. However, the presence of
the weighted reaction term in Eq. \eqref{eq1} lead to some striking
differences with respect to the homogeneous case $\sigma=0$. The
first one of them is illustrated by the following
\begin{lemma}\label{lem.monot}
Let $f$ be a solution to \eqref{SSODE} with $\sigma>0$ such that
$f(0)=a>0$ and $f'(0)=0$. Then $f'(\xi)>0$ in a right neighborhood
of $\xi=0$. In particular, there are no decreasing self-similar good
profiles for Eq. \eqref{eq1}.
\end{lemma}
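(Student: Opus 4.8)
The plan is to exploit the sign of the second derivative at the origin. First I would establish that, because $f(0)=a>0$, the solution is genuinely $C^2$ in a right neighborhood of $\xi=0$: expanding $(f^m)''=mf^{m-1}f''+m(m-1)f^{m-2}(f')^2$ and solving \eqref{SSODE} for $f''$ gives
\begin{equation*}
f''(\xi)=\frac{1}{mf^{m-1}(\xi)}\left[\alpha f(\xi)-\beta\xi f'(\xi)-\xi^{\sigma}f(\xi)^p-m(m-1)f^{m-2}(\xi)f'(\xi)^2\right].
\end{equation*}
Since $f$ is continuous with $f(0)=a>0$, the coefficient $mf^{m-1}$ stays bounded away from zero near $\xi=0$, so the right-hand side is continuous there (note $\xi^{\sigma}\to0$ as $\xi\to0$ because $\sigma>0$), and $f''$ extends continuously up to $\xi=0$.

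The heart of the argument is then a one-line computation. Evaluating the expression above at $\xi=0$ and using the hypotheses $f(0)=a>0$, $f'(0)=0$ together with $\sigma>0$ (which forces the weighted term $\xi^{\sigma}f^p$ to vanish at the origin), every term except the first drops out, leaving
\begin{equation*}
f''(0)=\frac{\alpha a}{m a^{m-1}}=\frac{\alpha}{m}\,a^{2-m}>0,
\end{equation*}
where positivity follows from $\alpha>0$ (see \eqref{SSexp}) and $a>0$. By continuity of $f''$, we have $f''(\xi)>0$ on some interval $(0,\delta)$, and since $f'(0)=0$ this yields $f'(\xi)=\int_0^{\xi}f''(s)\,ds>0$ for $\xi\in(0,\delta)$, which is the claim.

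The final assertion is immediate: a decreasing good profile would, by Definition \ref{def1} (P1), satisfy $f(0)=a>0$, $f'(0)=0$ and $f'\le0$ near the origin, contradicting $f'(\xi)>0$ on $(0,\delta)$; profiles of type (P2) start from $f(0)=0$ and are therefore not decreasing from a positive value. I do not expect a serious obstacle here. The only point requiring mild care is the justification that $\xi^{\sigma}f^p$ contributes nothing to $f''(0)$ despite $\xi^{\sigma}$ being non-smooth at the origin for non-integer $\sigma$; this is resolved purely by continuity and the vanishing of $\xi^{\sigma}$ at $\xi=0$, so no differentiability of the weight is needed. This is precisely the mechanism by which the weight $|x|^{\sigma}$ with $\sigma>0$ departs from the homogeneous case, where $\xi^{0}f^p=f^p$ would instead enter $f''(0)$.
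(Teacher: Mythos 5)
Your proof is correct and follows essentially the same route as the paper: both arguments take the limit of \eqref{SSODE} as $\xi\to0$, use $\sigma>0$ so that the weighted term $\xi^{\sigma}f^p$ vanishes at the origin, conclude that the second derivative is $\alpha a>0$ there (the paper works with $(f^m)''$ and $(f^m)'$, you divide by $mf^{m-1}(\xi)$ --- legitimate since $f(0)=a>0$ --- and work with $f''$ and $f'$, an immaterial difference), and then integrate from $\xi=0$ using $f'(0)=0$. The closing remark about decreasing profiles also matches the paper's intent, so nothing is missing.
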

\begin{proof}
We first notice that
\begin{equation}\label{interm8}
(f^m)'(0)=mf^{m-1}(0)f'(0)=0,
\end{equation}
and that $\lim\limits_{\xi\to0}|\xi|^{\sigma}f^p(\xi)=0$, for
$\sigma>0$. Thus, taking limits as $\xi\to0$ in \eqref{SSODE} we
find that
\begin{equation}\label{interm9}
\lim\limits_{\xi\to0}(f^m)''(\xi)=\alpha f(0)=\alpha a>0.
\end{equation}
Combining \eqref{interm8} and \eqref{interm9} leads to
$$
0<(f^m)'(\xi)=mf^{m-1}(\xi)f'(\xi)
$$
for any $\xi$ in a sufficiently small right neighborhood of the
origin, thus also $f'(\xi)>0$ in this neighborhood, as stated.
\end{proof}
Related to the previous lemma, one can prove an easy, but still very
important property related to the maximums of the profiles. More
precisely:
\begin{lemma}\label{lem.max}
Let $f$ be a solution to \eqref{SSODE} and $\xi_0\in(0,\infty)$ be a
local maximum point for $f$. Then we have:
\begin{equation}\label{eq.max}
f(\xi_0)\geq\alpha^{1/(p-1)}\xi_0^{-\sigma/(p-1)}.
\end{equation}
\end{lemma}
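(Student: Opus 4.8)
The plan is to evaluate the profile equation \eqref{SSODE} directly at the local maximum point $\xi_0$ and to exploit the two elementary consequences of $\xi_0\in(0,\infty)$ being a local maximum, namely $f'(\xi_0)=0$ and $f''(\xi_0)\le 0$. First I would observe that we may assume $f(\xi_0)>0$: since $f$ is nonnegative, if $f(\xi_0)=0$ then $\xi_0$ would be an interior minimum as well, forcing $f$ to vanish in a full neighbourhood of $\xi_0$, a degenerate situation which is excluded for the profiles under consideration (and in which the asserted inequality is vacuous anyway).

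The key step is to control the sign of the degenerate diffusion term at $\xi_0$. Expanding
$$
(f^m)''=mf^{m-1}f''+m(m-1)f^{m-2}(f')^2
$$
and evaluating at $\xi_0$, where $f'(\xi_0)=0$, the second summand drops out, leaving $(f^m)''(\xi_0)=mf^{m-1}(\xi_0)f''(\xi_0)$. Since $f(\xi_0)>0$ and $f''(\xi_0)\le 0$, this yields $(f^m)''(\xi_0)\le 0$. This is the crucial observation; the remainder is bookkeeping.

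Substituting $f'(\xi_0)=0$ into \eqref{SSODE} at $\xi=\xi_0$ gives
$$
(f^m)''(\xi_0)-\alpha f(\xi_0)+\xi_0^{\sigma}f(\xi_0)^p=0,
$$
so that $\xi_0^{\sigma}f(\xi_0)^p=\alpha f(\xi_0)-(f^m)''(\xi_0)\ge \alpha f(\xi_0)$ by the nonpositivity of the diffusion term just established. Dividing by $f(\xi_0)>0$ produces $\xi_0^{\sigma}f(\xi_0)^{p-1}\ge\alpha$, and raising to the power $1/(p-1)>0$ (recall $p>1$) gives exactly the inequality \eqref{eq.max}.

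I do not expect a genuine obstacle here. The only points requiring a word of care are the reduction to the case $f(\xi_0)>0$ and the correct handling of the degenerate operator $(f^m)''$, whose nonpositivity at a maximum is precisely what drives the estimate; the transport term $\beta\xi f'$ vanishes for free at $\xi_0$, and the weight $\xi_0^{\sigma}$ is simply carried through the computation.
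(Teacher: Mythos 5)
Your proof is correct and follows essentially the same route as the paper: evaluate \eqref{SSODE} at $\xi_0$, use $f'(\xi_0)=0$ and $(f^m)''(\xi_0)\leq 0$ to get $\alpha f(\xi_0)\leq\xi_0^{\sigma}f(\xi_0)^p$, then divide by $f(\xi_0)>0$ and take the $(p-1)$-th root. Your explicit justification that $(f^m)''(\xi_0)=mf^{m-1}(\xi_0)f''(\xi_0)\leq 0$ via the expansion of the degenerate operator is a detail the paper leaves implicit, but it is the same argument.
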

\begin{proof}
This follows easily by evaluating Eq. \eqref{SSODE} at the point
$\xi=\xi_0$, taking into account that $f'(\xi_0)=0$ and
$(f^m)''(\xi_0)\leq0$. This implies that
$$
\alpha f(\xi_0)\leq\xi_0^{\sigma}f(\xi_0)^{p},
$$
which, together with the fact that $f(\xi_0)>0$, readily leads to
\eqref{eq.max}.
\end{proof}
In order to establish the existence of self-similar profiles for
\eqref{eq1} satisfying properties (P1) and (P2), a first important
step is to establish a uniqueness result for any given interface
point. We thus prove:
\begin{proposition}\label{prop.uniqFB}
For any given $\xi_0\in(0,\infty)$, there exists a unique solution
$f$ to \eqref{SSODE} such that
$$
f(\xi)>0 \ {\rm for} \ \xi\in(0,\xi_0), \qquad f(\xi)=0 \ {\rm for}
\ \xi\geq\xi_0, \qquad \lim\limits_{\xi\to\xi_0}(f^m)'(\xi)=0,
$$
or in other words, a unique solution to \eqref{SSODE} having $\xi_0$
as interface point.
\end{proposition}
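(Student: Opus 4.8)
The plan is to isolate the genuinely hard degenerate behaviour at the interface point $\xi_0$ from the rest of the problem, which is classical. As long as $f>0$, Eq. \eqref{SSODE} can be solved for $f''$ with a right-hand side that is locally Lipschitz in $(f,f')$, so on any interval where $f$ stays positive the Cauchy--Lipschitz theorem yields existence, uniqueness and continuous dependence; the only place where standard theory fails is at $\xi_0$, where $f(\xi_0)=0$ makes the coefficient $mf^{m-1}$ of $f''$ in $(f^m)''$ degenerate (equivalently, the first-order system for $(f,(f^m)')$ carries the singular factor $f^{1-m}$). First I would therefore establish existence and uniqueness of a solution on a one-sided neighbourhood $(\xi_0-\delta,\xi_0)$ satisfying the interface conditions, and only afterwards extend it backwards by Cauchy--Lipschitz as long as it remains positive.

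For the local analysis near $\xi_0$ I would recast \eqref{SSODE} as a fixed-point equation. Integrating \eqref{SSODE} twice on $(\xi,\xi_0)$ and using $f(\xi_0)=0$, $(f^m)'(\xi_0)=0$, together with one integration by parts on the term $\beta\xi f'$, gives
\begin{equation*}
f^m(\xi)=\int_\xi^{\xi_0}\Big[\beta s\,f(s)+\int_s^{\xi_0}\big((\alpha+\beta)f(r)-r^\sigma f(r)^p\big)\,dr\Big]\,ds=:(\mathcal{T}f)(\xi).
\end{equation*}
Matching the leading order (the term $f^p$ is of higher order since $p>1$) forces any fixed point to vanish at the rate $f(\xi)\sim c(\xi_0-\xi)^{1/(m-1)}$ with $c^{m-1}=\beta(m-1)\xi_0/m$, which is precisely the interface behaviour \eqref{IFeq} rewritten with $C=\beta(m-1)\xi_0^2/(2m)$, as in Lemma \ref{lem.2}. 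I would then write $f=(\mathcal{T}f)^{1/m}$ and run a Banach fixed-point argument in the complete metric space of continuous functions on $[\xi_0-\delta,\xi_0]$ vanishing at $\xi_0$ with this prescribed rate, equipped with the weighted norm $\sup(\xi_0-\xi)^{-1/(m-1)}|f(\xi)|$; for $\delta$ small the integrations supply factors of $\delta$, and the $m$-th root together with the superlinear term $f^p$ make $\mathcal{T}$ a contraction, producing a unique local solution with behaviour \eqref{IFeq}.

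Uniqueness for a prescribed interface point is the crux, and I would prove it directly from the integral formulation. Any solution with interface at $\xi_0$ satisfies $f^m=\mathcal{T}f$ and, by Lemma \ref{lem.2}, the leading behaviour \eqref{IFeq}. Subtracting two such solutions $f_1,f_2$ and setting $d=f_1-f_2$, the mean value theorem rewrites $f_1^m-f_2^m$ and $f_1^p-f_2^p$ as $d$ times weights comparable to $(\xi_0-\xi)$ and $(\xi_0-\xi)^{(p-1)/(m-1)}$ respectively; this turns $f_1^m-f_2^m=\mathcal{T}f_1-\mathcal{T}f_2$ into a singular Volterra inequality of the form $(\xi_0-\xi)|d(\xi)|\lesssim\int_\xi^{\xi_0}(\cdots)|d|$, which a weighted Gronwall estimate forces to vanish on $(\xi_0-\delta,\xi_0)$. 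Backward uniqueness from Cauchy--Lipschitz then propagates $d\equiv0$ throughout the positivity region. I expect the main obstacle of the whole proof to lie exactly here: controlling the singular weight $(\xi_0-\xi)^{-1}$ arising from inverting $f\mapsto f^m$ at the degenerate endpoint so that the Gronwall argument closes.

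Finally I would continue the local solution backwards. By the previous steps it extends uniquely while $f>0$; just to the left of $\xi_0$ one has $f'<0$ by \eqref{IFeq}, so $f$ increases as $\xi$ decreases, and any interior local maximum is bounded below through Lemma \ref{lem.max}. Tracing this unique backward trajectory yields the unique maximal positive solution $f_{\xi_0}$; it is this profile, positive on $(0,\xi_0)$, that serves as the shooting object of the subsequent sections. Extending $f$ by $0$ on $[\xi_0,\infty)$ is admissible because both $f$ and $(f^m)'$ vanish at $\xi_0$, which completes the construction of the unique solution having $\xi_0$ as interface point.
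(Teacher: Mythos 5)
Your proposal is correct in outline but follows a genuinely different route from the paper. The paper works entirely in the phase space associated with the system \eqref{PSsyst1}: the interface point $\xi_0$ becomes the critical point $(0,-\beta\xi_0/m,\xi_0)$ on the half-line $\{x=0,\ my+\beta z=0\}$, whose linearization has one stable, one unstable and one center direction; the authors then argue (deferring the verification to the analogous computation in \cite{IS1}) that the center and unstable manifolds lie in the invariant plane $\{x=0\}$, so exactly one orbit enters from $\{x>0\}$, and that orbit carries the profile. You instead attack the degenerate endpoint directly through the twice-integrated equation $f^m=\mathcal{T}f$ and a weighted contraction, which is more self-contained and more quantitative: it produces the interface rate and the constant in \eqref{IFeq} explicitly rather than reading them off from Lemma \ref{lem.2}, at the price of heavier bookkeeping with the singular weight $(\xi_0-\xi)^{-1}$. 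The computation does close: the leading term $\int_\xi^{\xi_0}\beta sf(s)\,ds$ combined with the factor $\tfrac1m(\mathcal{T}f)^{1/m-1}\sim \tfrac1m c^{1-m}(\xi_0-\xi)^{-1}$ yields a Lipschitz constant tending to $1/m<1$, so your map is a contraction \emph{because} $m>1$, not because the integrations supply powers of $\delta$ (the dominant term gains exactly the power needed to cancel the singularity, no more); this is worth stating precisely since it is the heart of the argument. Two further points need explicit treatment to make the uniqueness claim complete: first, you must show that \emph{every} solution with interface at $\xi_0$ lies in your weighted space with the precise constant $c^{m-1}=\beta(m-1)\xi_0/m$ — this does not follow from quoting Lemma \ref{lem.2} verbatim, but can be extracted from the once-integrated identity $(f^m)'(\xi)=-\beta\xi f(\xi)-(\alpha+\beta)\int_\xi^{\xi_0}f+\int_\xi^{\xi_0}r^\sigma f^p$ after establishing eventual monotonicity of $f$ near $\xi_0$; second, in the Gronwall step the lower bound $\theta^{m-1}\gtrsim(\xi_0-\xi)$ for the mean-value weight requires the lower bound of that same rate for both solutions. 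With these points filled in, your argument is a valid, and arguably more elementary, substitute for the paper's invariant-manifold proof.
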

\begin{proof}
Let $\xi_0\in(0,\infty)$ be given. Throughout this proof, we will be
using the phase space associated to the system \eqref{PSsyst1} and
recall that the interface points and profiles were seen in this
phase space as critical points on the half-line of equations $x=0$, $my+\beta
z=0$ with $z>0$ and $y<0$ and orbits entering these points.
Recalling \eqref{PSchange1}, the critical point corresponding to the
given $\xi_0>0$ has the coordinates $(0,-\beta\xi_0/m,\xi_0)$. The
linearization of the system \eqref{PSsyst1} near this critical point
has the matrix
$$
M=\left(
    \begin{array}{ccc}
      -(m-1)\beta\xi_0 & 0 & 0 \\
      \alpha & \beta\xi_0 & \frac{\beta^2\xi_0}{m} \\
      m & 0 & 0 \\
    \end{array}
  \right)
$$
which shows that the critical point has a one-dimensional stable
manifold, a one-dimensional unstable manifold and a one-dimensional
center manifold. Similarly as in the analysis performed in \cite[Lemma 2.2]{IS1}, one can readily prove that
the center and unstable manifolds are unique and lie in the invariant plane $x=0$,
while there is a unique connection entering this critical point from
outside the plane $x=0$ which contains the profile we look for.
\end{proof}
\noindent \textbf{Remark.} One can also establish how a compactly
supported profile $f$ approaches its interface point $\xi_0$.
Indeed, we obtain that
$$
(f^{m-2}f')(\xi)\sim-\frac{\beta}{m}\xi, \qquad {\rm as} \
\xi\to\xi_0,
$$
whence by integration,
$$
f(\xi)\sim\left[C-\frac{\beta(m-1)}{2m}\xi^2\right]^{1/(m-1)}, \ \
C=\frac{\beta(m-1)}{2m}\xi_0^2, \qquad {\rm as} \ \xi\to\xi_0.
$$
This obviously reminds of the Barenblatt solutions to the standard
porous medium equation (see for example \cite{ISV}).

According to Proposition \ref{prop.uniqFB}, for $\eta\in(0,\infty)$
given, we denote by $f_{\eta}$ the unique solution to \eqref{SSODE}
having $\eta>0$ as interface point. The next step in the existence
proof is to display a shooting method from the interface point, that
is, to trace the trajectory of $f_{\eta}$ when moving
$\eta\in(0,\infty)$, as already explained in the Introduction. The
idea of the shooting is to show that, when either $\eta>0$ is too
close to $+\infty$, or too close to 0, then $f_{\eta}$ is not a good
profile. We then have
\begin{proposition}\label{prop.far}
There exists $\eta^*\in(0,\infty)$ such that, for any $\eta>\eta^*$,
the profile $f_{\eta}$ solution to \eqref{SSODE} with interface
point at $\eta$ changes sign "backward" at some positive point. More
precisely, for any $\eta>\eta^*$, there exists $\theta\in(0,\eta)$
such that
$$
f_{\eta}(\theta)=0, \quad (f_{\eta}^m)'(\theta)>0, \quad
f_{\eta}(\xi)>0 \ {\rm for} \ \xi\in(\theta,\eta).
$$
\end{proposition}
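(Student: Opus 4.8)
The plan is to exploit the fact that for $\eta$ large the interface sits far out, where the weight $\xi^\sigma$ is strong, and to extract a limiting boundary-layer profile near $\xi=\eta$ that bends the profile back down to zero. First I would record the behaviour of $f_\eta$ at its interface: by Proposition \ref{prop.uniqFB} and the Remark following it, near $\xi=\eta$ one has the Barenblatt-type expansion $f_\eta(\xi)\sim[\tfrac{\beta(m-1)}{2m}(\eta^2-\xi^2)]^{1/(m-1)}$, so that $f_\eta$ leaves the interface increasing as $\xi$ decreases, and its amplitude over a backward distance $\eta-\xi=O(1)$ is of order $\eta^{1/(m-1)}$.

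Next I would rescale around the interface. Setting $\xi=\eta-L\tau$ and $f_\eta=\Lambda F_\eta$, with $\Lambda=\Lambda(\eta)$ and $L=L(\eta)$ chosen so that the transport and reaction coefficients in the transformed equation are normalised to $1$ (this forces $\Lambda\sim\eta^{(2-\sigma)/(m+p-2)}$ and, crucially, $L=o(\eta)$), the profile equation \eqref{SSODE} becomes
\begin{equation*}
(F_\eta^m)''-c_1(\eta)F_\eta-c_2(\eta)F_\eta'+c_3(\eta)F_\eta^p=0,
\end{equation*}
where $c_2,c_3\to1$ while the coefficient of the linear term satisfies $c_1(\eta)\to0$ as $\eta\to\infty$. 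The point is that the limiting equation is \emph{independent of} $\sigma$: the rescaled profiles converge to a solution $F$ of
\begin{equation*}
(F^m)''-F'+F^p=0,\qquad F(0)=0,\qquad (F^m)'(0)=0,
\end{equation*}
with the Barenblatt departure $F(\tau)\sim c\,\tau^{1/(m-1)}$ as $\tau\to0^+$ inherited from the interface.

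I would then show that this limiting equation has a compactly supported bump. Writing $u=F^m$ and $E=\tfrac12(u')^2+\tfrac{m}{m+p}u^{(m+p)/m}$, a direct computation gives $E'=\tfrac1m u^{(1-m)/m}(u')^2\ge0$, so the energy is nondecreasing. The orbit leaves the degenerate rest point $u=u'=0$ through the Barenblatt branch (near which the kinetic term dominates the potential), climbs while the restoring force $-u^{p/m}$ decelerates it, turns around at a finite height, and then descends with $u''<0$, reaching $u=0$ again at a finite $\tau_*$ with $u'(\tau_*)=-\ell<0$. This is exactly the right-hand zero of a bump: $F(\tau_*)=0$ and $(F^m)'(\tau_*)<0$, which after undoing the rescaling corresponds to $(f_\eta^m)'>0$ at the backward zero, matching the sign of the profiles emanating from $Q_2$ in Lemma \ref{lem.inf2}.

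Finally I would transfer this to $f_\eta$ by continuous dependence: since $F_\eta$ solves an equation whose coefficients and interface data converge to those of the limiting problem, $F_\eta\to F$ locally uniformly, so for $\eta$ large $F_\eta$ also vanishes at some $\tau_\eta\to\tau_*$ with negative slope while staying positive on $(0,\tau_\eta)$. Undoing $\xi=\eta-L\tau$ produces $\theta=\eta-L\tau_\eta\in(0,\eta)$ (positive because $L\tau_\eta=o(\eta)$) with $f_\eta(\theta)=0$, $(f_\eta^m)'(\theta)>0$ and $f_\eta>0$ on $(\theta,\eta)$, which is the claim; $\eta^*$ is any threshold past which this convergence is close enough. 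The main obstacle is the rigorous justification of $F_\eta\to F$: the limit is taken near a degenerate (porous-medium) zero of $F$ at both endpoints $\tau=0$ and $\tau=\tau_*$, where the equation fails to be Lipschitz and naive continuous dependence does not apply. I would handle this by working throughout with the pressure variable $u=F^m$, matching the explicit Barenblatt behaviour at $\tau=0$, and using a barrier/comparison argument near $\tau_*$ to pin the location and sign of the second zero uniformly in $\eta$. As a consistency check, the same conclusion is visible in the phase space of \eqref{PSsyst2}: for $\eta$ large the orbit attains arbitrarily large values of $Z=\xi^\sigma f^{p-1}$ near $\xi=\eta$, and the term $+XZ$ in the reversed $\dot Y$ equation then drives $Y\to+\infty$, i.e. the backward orbit limits onto $Q_2$.
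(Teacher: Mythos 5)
Your route is genuinely different from the paper's, and in fact the two are two faces of the same limit. The paper works entirely in the compactified phase space: it passes to the system \eqref{PSsyst3} (with $\overline{Z}=XZ$), shows that inside the invariant plane $\{X=0\}$ the unique orbit entering the interface point $P_1$ emanates from the point at infinity $Q_2$, and then uses that $Q_2$ is an unstable node (Lemma \ref{lem.inf2}) together with continuity with respect to data to produce, for every small $\epsilon>0$, genuine orbits from $Q_2$ to $P_1$ with $X<\epsilon$; the maximum estimate of Lemma \ref{lem.max} then forces the interface points of the corresponding profiles to be large, and Lemmas \ref{lem.2} and \ref{lem.inf2} identify the backward zero with positive slope. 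Your reduced problem is precisely the phase-space shadow of that plane: along your boundary layer one has $Y\to-\beta F^{m-2}F_\tau$ and $\overline{Z}\to\beta^2F^{m+p-2}$, and the reduced system \eqref{interm10} is the phase-plane form of your limit ODE $(F^m)''-F'+F^p=0$. What the paper's formulation buys is that every delicate step happens at nondegenerate objects ($Q_2$ is a hyperbolic node, and $P_1$ has a one-dimensional transversal stable direction by Proposition \ref{prop.uniqFB}), so continuous dependence is invoked away from the degenerate zeros of $f$. What yours would buy, if completed, is quantitative information the paper does not state: the backward zero sits at $\theta=\eta-O(L)$ with $L\sim\eta^{(m-1)(2-\sigma)/(m+p-2)-1}=o(\eta)$, and the bump has amplitude $\sim\eta^{(2-\sigma)/(m+p-2)}$.

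As written, however, there are two genuine gaps. First, the existence of the compactly supported bump for the limit equation is asserted by a mechanical metaphor, but your energy $E$ is \emph{increasing}, so the particle is being pumped rather than damped and it is not automatic that it turns around instead of escaping to infinity. This can be repaired: writing $w=\frac12(u')^2$ as a function of $u$ while $u'>0$, the inequality $dw/du\leq\frac{\sqrt{2}}{m}u^{(1-m)/m}\sqrt{w}$ gives $w\lesssim u^{2/m}$, hence $dw/du\leq Cu^{(2-m)/m}-u^{p/m}<0$ for $u$ large because $p>2-m$; so a turning point exists, after which concavity does yield the second zero with negative slope. Second, and more seriously, the convergence $F_\eta\to F$ together with the persistence of the second zero \emph{is} the content of the proposition, and you explicitly defer it: the limit is taken at a non-Lipschitz rest point where uniqueness of the departing solution must itself be established (the equation \eqref{SSODE} is exactly of the type where $f(0)=0$ data do not determine the solution), and the barrier argument near $\tau_*$ is only named, not constructed. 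Until that step is carried out the argument is a program rather than a proof; the paper's detour through the Poincar\'e sphere and the unstable node $Q_2$ is precisely the device that replaces it.
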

In the proof of Proposition \ref{prop.far}, due to technical
problems, we will switch to another quadratic phase plane system,
slightly different from \eqref{PSsyst2}. To this end, we replace $Z$
by the new variable
$$
\overline{Z}:=XZ
$$
obtaining the following autonomous system
\begin{equation}\label{PSsyst3}
\left\{\begin{array}{ll}\dot{X}=mX[(m-1)Y-2X],\\
\dot{Y}=-mY^2-\beta Y+\alpha X-mXY-\overline{Z},\\
\dot{\overline{Z}}=m\overline{Z}[(m+p-2)Y+(\sigma-2)X].\end{array}\right.
\end{equation}
Let us notice that in terms of profiles, we have
$\overline{Z}=\xi^{\sigma-2}f(\xi)^{m+p-2}$. This shows why we do not
use always in our analysis the system \eqref{PSsyst3}: in many
situations, using it will lead us to a discussion of whether
$\sigma>2$ or $\sigma<2$. But it is useful in the proof of
Proposition \ref{prop.far}.
\begin{proof}
This proof is divided into several steps. The first of them is
establishing a \emph{limit connection} which lies in the invariant
plane $X=0$ and connects with a point at space infinity. Then, in a
second step we show that the critical point at space infinity is a
node, and finally, this allows us to establish that there exist
connections in the phase space associated to \eqref{PSsyst3} but
outside the plane $X=0$, connecting the same two critical points,
which can be translated into profiles $f(\xi)$.

\medskip

\noindent\textbf{Step 1. Analysis in the invariant plane $X=0$.}
Letting $X=0$, we arrive to the following reduced phase plane:
\begin{equation}\label{interm10}
\left\{\begin{array}{ll}\dot{Y}=-mY^2-\beta Y-\overline{Z},\\
\dot{\overline{Z}}=m(m+p-2)Y\overline{Z}.\end{array}\right.
\end{equation}
In this plane, the interface point $P_1$ reduces to the critical
point with coordinates $P_1=(-\beta/m,0)$, which is a saddle point
(we omit the easy verification). We are interested in the (unique)
orbit of the plane entering $P_1$. We notice that there exists a
special curve in the phase plane associated to the system
\eqref{interm10}, which is
\begin{equation}\label{interm11}
\overline{Z}=-mY^2-\beta Y,
\end{equation}
which together with the two axis divide the plane into three regions
of monotonicity of $\overline{Z}$ as a function of $Y$: the region
"interior" to the curve \eqref{interm11}, where $\overline{Z}$
decreases with respect to $Y$, the first quadrant $\overline{Z}>0$,
$Y>0$ where again $\overline{Z}$ is decreasing with respect to $Y$,
and the region where $Y<0$ and $\overline{Z}>-mY^2-\beta Y$, in
which $\overline{Z}$ is increasing with respect to $Y$ along the
orbits. With this splitting of the plane in mind, we readily observe
that the orbit entering the saddle point $P_1$ in the plane has to
enter through the region $Y<0$, $\overline{Z}>-mY^2-\beta Y$.

We next show that the orbit entering $P_1$ in the plane has to cross
the axis $Y=0$ at a positive height $\overline{Z}$. Assume by
contradiction that this is not the case. It follows that the orbit
entering $P_1$ comes from a critical point (has a vertical
asymptote) with $Y\in(-\beta/m,0)$ and $\overline{Z}=\infty$. We can
then write that, along this curve, we have:
\begin{equation*}
\begin{split}
\frac{d\overline{Z}}{dY}&=\frac{m(m+p-2)\overline{Z}Y}{-\overline{Z}-mY^2-\beta
Y}=\frac{m(m+p-2)\overline{Z}(-Y)}{\overline{Z}+(mY^2+\beta Y)}\\
&\leq-\frac{m(m+p-2)\overline{Z}Y}{\overline{Z}}=-m(m+p-2)Y,
\end{split}
\end{equation*}
whence by integration we get that
$$
\overline{Z}\leq K-\frac{m(m+p-2)}{2}Y^2, \qquad {\rm for \ some } \
K>0,
$$
a contradiction with the fact that $Y\in(-\beta/m,0)$ and
$\overline{Z}\to\infty$.

Thus, the orbit entering $P_1$ will cross the axis $Y=0$ at some
finite positive height $\overline{Z}>0$, thus entering the first
quadrant where it remains decreasing forever with
$\lim\limits_{Y\to\infty}\overline{Z}(Y)=L\geq0$. In this case, for
a sufficiently large constant $K>0$, we find that
$$
\frac{d\overline{Z}}{dY}=\frac{m(m+p-2)\overline{Z}Y}{-\overline{Z}-mY^2-\beta
Y}\leq-\frac{m(m+p-2)\overline{Z}Y}{K+mY^2+\beta Y},
$$
so that $Z\leq\Theta$, where $\Theta$ is the solution to the
equation
\begin{equation}\label{interm12}
\frac{d\Theta}{dY}=-\frac{m(m+p-2)\Theta Y}{mY^2+\beta Y+K}.
\end{equation}
But an easy integration of \eqref{interm12} gives
$$
\Theta(Y)=C\exp\left[-m(m+p-2)\int_0^Y\frac{s}{ms^2+\beta
s+K}ds\right]\longrightarrow0 \quad {\rm as} \ Y\to\infty,
$$
hence also $\overline{Z}(Y)\to0$ as $Y\to\infty$.

\medskip

\noindent \textbf{Step 2. Critical point at infinity}. From Step 1
above, we deduce that on the orbit included in the plane $X=0$ one
gets $Y\to\infty$ and $Z\to0$. This suggests that this orbit comes
from a critical point at infinity whose coordinates (in generalized
sense) would be $(0,+\infty,0)$. But we already know that such point
exists for the whole phase space and is seen on the Poincar\'e
hypersphere as $Q_2=(0,1,0,0)$ according to the analysis done in
Lemma \ref{lem.inf2}. In particular, it also follows from Lemma
\ref{lem.inf2} that this critical point is an unstable node. We can
thus combine this fact with the theorem of continuity with respect
to data and parameters to prove that there exists $\epsilon_0>0$
such that for any $\epsilon\in(0,\epsilon_0)$, there exists a
connection between the critical points $Q_2$ and $P_1$ in the whole
phase space with $X(\xi)<\epsilon$ for $\xi\in(0,\infty)$. We will
analyze the profiles contained in these orbits.

\medskip

\noindent \textbf{Step 3. Analysis of the profiles.} Fix for the
moment $\epsilon\in(0,\epsilon_0)$ as explained in Step 2 above and
consider an orbit on which $X(\xi)<\epsilon$ for any
$\xi\in(0,\infty)$. Let us notice first that any profile contained
in such orbit has a maximum point. Indeed, as the orbit connects
$Q_2$ and $P_1$, it follows from Lemmas \ref{lem.2} and
\ref{lem.inf2} that there exist $0<\xi_1<\eta<\infty$ such that
$f(\xi_1)=f(\eta)=0$, $f'(\xi_1)>0$ and $f'(\eta)=0$. Thus, there
exists a maximum point $\xi_M\in(\xi_1,\eta)$ for $f$. On the one
hand, we derive from Lemma \ref{lem.max} that
$$
f(\xi_M)\geq\alpha^{1/(p-1)}\xi_M^{-\sigma/(p-1)}.
$$
On the other hand, since $X(\xi_M)<\epsilon$, we get
$$
f(\xi_M)<\epsilon\xi_M^{2/(m-1)},
$$
whence
\begin{equation}\label{interm14}
\xi_M^{\frac{2(p-1)+\sigma(m-1)}{(m-1)(p-1)}}>\frac{1}{\epsilon}\alpha^{1/(p-1)}.
\end{equation}
The estimate \eqref{interm14} shows that, by making
$\epsilon\in(0,\epsilon_0)$ as small as we want, the point $\xi_M$
(and then, also the interface point of $f$, $\eta>\xi_M$) can be as
large as we want.

We next easily remark that the following transformation
$$
X=xz^{-2}, \qquad Y=yz^{-1}, \qquad
\overline{Z}=x^{1+\frac{p-1}{m-1}}z^{\sigma-2},
$$
is a diffeomorphism between the following regions of the phase
planes associated to the systems \eqref{PSsyst3}, respectively
\eqref{PSsyst1}:
$$
\left\{X>0, Y\in\real, \overline{Z}>0\right\}\longmapsto\left\{x>0,
y\in\real, z>0\right\},
$$
mapping the orbits entering $P_1$ in the phase plane associated to
the system \eqref{PSsyst3} into the unique orbits entering the
points on the critical line $x=0$, $my+\beta z=0$ in the phase-space
associated to the system \eqref{PSsyst1} (we leave to the reader the
easy verification of this fact). Thus, the orbits entering $P_1$
with $X(\xi)<\epsilon$ analyzed previously are mapped into orbits
entering a point of the critical line $x=0$, $my+\beta z=0$ of the
system \eqref{PSsyst1} with $z$ as large as we want. Since the set of interface points $\eta\in(0,\infty)$ such
that $f_{\eta}$ belongs to an orbit connecting $Q_2$ to $P_1$ is an open set, we get that all the profiles with interface in some neighborhood of infinity belong to connections between $Q_2$ and $P_1$ in the phase space associated to the system \eqref{PSsyst3}. We reach the conclusion by Lemmas \ref{lem.2} and \ref{lem.inf2}.
\end{proof}
We are now ready to study the behavior of the profiles whose
interface lies very close to the origin.
\begin{proposition}\label{prop.close}
There exists $\eta_*\in(0,\infty)$ such that, for any
$0<\eta<\eta_*$, the profile $f_{\eta}$ solution to \eqref{SSODE}
with interface point at $\eta$ satisfies
$$
f_{\eta}(0)=a>0, \qquad f_{\eta}'(0)<0, \qquad f_{\eta}(\xi)>0  \
{\rm and \ decreasing \ for \ } \xi\in(0,\eta).
$$
\end{proposition}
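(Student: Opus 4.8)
The plan is to run the backward shooting exactly as in the proof of Proposition \ref{prop.far}, but now exploiting the \emph{other} invariant plane, namely $\{Z=0\}$, on which the reaction term drops out of the system \eqref{PSsyst2}. The heuristic is that, since $Z=\xi^{\sigma}f^{p-1}$, a profile whose interface $\eta$ sits close to $0$ has $Z(\xi)$ uniformly small on its whole support, so its orbit is a small perturbation of an orbit lying in $\{Z=0\}$. The relevant limit connection in that plane will join the critical point at infinity $Q_1$ (which governs profiles with $f(0)=a>0$, by Lemma \ref{lem.inf1}) to the interface point $P_1$ (Lemma \ref{lem.2}), and I will show it carries a \emph{strictly decreasing} profile. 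The three ingredients are thus: the planar analysis in $\{Z=0\}$, a continuity-with-respect-to-parameters argument in the spirit of Step 2 of Proposition \ref{prop.far}, and Lemmas \ref{lem.max} and \ref{lem.monot} to pin down the behaviour at $\xi=0$.

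\textbf{Step 1 (limit connection in $\{Z=0\}$).} Setting $Z=0$ in \eqref{PSsyst2} yields the planar system $\dot X=mX[(m-1)Y-2X]$, $\dot Y=-mY^2-\beta Y+\alpha X-mXY$, whose finite critical points are $P_0$, $P_1$ and $P_2$. Restricted to this plane, $P_1$ is a saddle (the in-plane eigenvalues of $M(P_1)$ are $-\beta(m-1)<0$ and $\beta>0$), while $P_2$ is a sink, since both eigenvalues $\lambda_1,\lambda_2$ computed in Lemma \ref{lem.3} are negative. Hence the one-dimensional stable manifold of $P_1$ enters from the region $X>0$, and its backward continuation cannot originate at the sink $P_2$; the only available $\alpha$-limit is the unstable node $Q_1$ at infinity (Lemma \ref{lem.inf1}), which produces the desired connection $Q_1\to P_1$ inside $\{Z=0\}$. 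The profile it carries has $f(0)=a>0$ and an interface, and it is decreasing: at a hypothetical interior local maximum $\xi_0$ the reaction-free equation forces $(f^m)''(\xi_0)=\alpha f(\xi_0)>0$, contradicting $(f^m)''(\xi_0)\le 0$. Finally Lemma \ref{lem.monot} excludes $f'(0)=0$, so $f'(0)<0$.

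\textbf{Step 2 (transfer to $Z>0$ and to small $\eta$).} Since $Q_1$ is an unstable node, continuity with respect to data and parameters (exactly as in Step 2 of Proposition \ref{prop.far}) gives $\epsilon_0>0$ such that for every $\epsilon\in(0,\epsilon_0)$ there is a connection $Q_1\to P_1$ in the full phase space with $Z(\xi)<\epsilon$ for all $\xi$. Along such an orbit the profile stays close to the bounded reaction-free profile of Step 1, so $f(\xi)\lesssim\eta^{2/(m-1)}$ on $(0,\eta)$; equivalently, after the rescaling $\xi=\eta s$, $f=\eta^{2/(m-1)}g$, the constraint $Z<\epsilon$ amounts to $\epsilon\sim\eta^{\sigma+2(p-1)/(m-1)}$, so $Z$ small along the orbit corresponds to $\eta$ small, and the interface points of these connections fill a neighborhood $(0,\eta_*)$ of the origin (an estimate analogous to \eqref{interm14}, with the roles of $X$ and $Z$ interchanged). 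For such $\eta$ we then have $\sup_{(0,\eta)}f_{\eta}\ll\alpha^{1/(p-1)}\eta^{-\sigma/(p-1)}\le\alpha^{1/(p-1)}\xi_0^{-\sigma/(p-1)}$ for any $\xi_0\in(0,\eta)$, so Lemma \ref{lem.max} forbids interior local maxima and $f_{\eta}$ is decreasing on $(0,\eta)$. Being positive and decreasing it satisfies $f_{\eta}(0)=a>0$, and Lemma \ref{lem.monot} once more yields $f_{\eta}'(0)<0$, as claimed.

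The hard part is Step 1 combined with the transfer in Step 2: on one side one must verify that the backward continuation of the stable manifold of $P_1$ inside $\{Z=0\}$ genuinely reaches $Q_1$ and does not accumulate on $P_0$ or on some other invariant set of the plane; on the other side one must make the perturbative equivalence ``$Z$ small along the orbit $\Longleftrightarrow$ interface $\eta$ small'' quantitative and uniform, in the same way that \eqref{interm14} quantified ``$X$ small'' for large interfaces in Proposition \ref{prop.far}. A clean consistency check for both points is the rescaling above, under which \eqref{SSODE} becomes $(g^m)''-\alpha g+\beta s g'+\eta^{\sigma+2(p-1)/(m-1)}s^{\sigma}g^p=0$ with interface fixed at $s=1$; as $\eta\to0$ the reaction coefficient vanishes and $g$ converges to the decreasing reaction-free profile of Step 1, making the conclusions $g(0)>0$ and $g'(0)<0$ transparent.
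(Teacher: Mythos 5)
Your strategy coincides with the paper's: a limit connection $Q_1\to P_1$ inside the invariant plane $\{Z=0\}$, a perturbation argument based on $Q_1$ being an unstable node, and Lemma \ref{lem.max} to exclude interior maxima once $Z<\epsilon<\alpha$ along the orbit. However, the two points you yourself flag as ``the hard part'' are exactly where the paper's proof does its real work, and your sketch does not fill them. First, the claim that the backward continuation of the in-plane stable manifold of $P_1$ must reach $Q_1$ does not follow merely from $P_2$ being a planar sink: one must also exclude $P_0$ (which emits orbits along its center manifold inside $\{Z=0\}$), the points $Q_2$ and $Q_5$ at infinity, and non-convergent behavior. The paper does this by a nullcline analysis of the reduced system \eqref{interm15}: the orbit entering $P_1$ lies inside the curve \eqref{interm16} where $dY/dX<0$, so $Y$ decreases as $X$ increases along the backward orbit, and the logarithmic estimate \eqref{interm17} rules out a finite limit for $Y$, forcing $Y\to-\infty$ with $X\to\infty$ and thereby identifying $Q_1$ as the only admissible $\alpha$-limit.

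Second, your rescaling argument only gives the easy implication (interface $\eta$ small $\Rightarrow$ $Z$ small, assuming $g$ bounded); what the proof needs is the converse, that an orbit with $Z(\xi)<\epsilon$ throughout has a small interface point. That direction requires a \emph{lower} bound on $f$ away from the interface, which the paper obtains from $(f^m)''\geq(\alpha-\epsilon)f$ (valid since $f$ is decreasing there), integrated twice to give $f(\xi)\geq K(\eta-\xi)^{2/(m-1)}$; plugging this into $\xi^{\sigma}f(\xi)^{p-1}<\epsilon$ at $\xi=\eta/2$ yields \eqref{interm18} and forces $\eta$ small. Without such a lower bound, an orbit could in principle have $Z$ small because $f$ itself is small, not because $\eta$ is. Finally, to pass from ``these particular connections have small interface'' to the statement ``\emph{every} $\eta\in(0,\eta_*)$ works,'' one still needs the diffeomorphism onto the system \eqref{PSsyst1} and the openness argument from Step 3 of Proposition \ref{prop.far}, which you assert but do not carry out. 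Your use of Lemma \ref{lem.monot} to upgrade $f'(0)\leq0$ to $f'(0)<0$ is a correct and clean touch.
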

\begin{proof}
This proof is divided into several steps following a similar
strategy as in the proof of Proposition \ref{prop.far}. The first of
them is establishing a \emph{limit connection} which lies in the
invariant plane $Z=0$ and connects with a point at space infinity.
In a second step we show that the critical point at space infinity
is an unstable node, which allows us to establish that there exist
orbits in the phase space associated to \eqref{PSsyst2} outside the
plane $Z=0$, connecting the same two critical points. Finally, we
undo the change of variable to characterize the profiles contained
in such orbits.

\medskip

\noindent\textbf{Step 1. Analysis in the invariant plane $Z=0$.}
Letting $Z=0$ in \eqref{PSsyst2}, we arrive to the following reduced
phase plane:
\begin{equation}\label{interm15}
\left\{\begin{array}{ll}\dot{X}=mX[(m-1)Y-2X],\\
\dot{Y}=-mY^2-\beta Y-mXY+\alpha X.\end{array}\right.
\end{equation}
In this plane, the interface point $P_1$ reduces to the critical
point with coordinates $P_1=(0,-\beta/m)$, which is a saddle point
(we omit the easy verification). We are interested in the (unique)
orbit of the plane entering $P_1$. We notice that there exists a
special curve in the phase plane associated to the system
\eqref{interm15}, given by the equation $dY/dX=0$, that is
\begin{equation}\label{interm16}
X=\frac{mY^2+\beta Y}{\alpha-mY},
\end{equation}
which passes through all the three finite critical points of the
system \eqref{interm15}, corresponding to $O$, $P_1$ and $P_2$ in
the big phase space, and having a horizontal asymptote at
$Y=\alpha/m$. The curve \eqref{interm16} together with the two axis
and the line $(m-1)Y-2X=0$ divide the plane into several regions of
monotonicity of $Y$ as a function of $X$ according to the sign of
the quantity
$$
\frac{dY}{dX}=-\frac{mY^2+\beta Y+mXY-\alpha X}{mX[(m-1)Y-2X]}.
$$
In particular, the fourth quadrant $\{X>0,\,Y<0\}$ is divided into
two regions of monotonicity: one "inside the curve"
\eqref{interm16}, where $dY/dX<0$, and the other "below the curve"
\eqref{interm16}, where $dY/dX>0$. The orbit entering $P_1$ in the
plane $Z=0$ has to do it through the region "inside the curve"
\eqref{interm16} due to monotonicity reasons. Indeed, assume by
contradiction that the orbit enters $P_1$ from the region below the
curve where $dY/dX>0$. Then on the orbit, $Y$ increases with respect
to $X$, which is a contradiction as the orbit would come from the
region $\{X>0,\,Y<-\beta/m\}$ to approach the point $X=0$,
$Y=-\beta/m$. Thus, the unique orbit entering $P_1$ comes from the
interior of the curve \eqref{interm16}, where $dY/dX<0$, hence along
this orbit, variable $Y$ decreases as $X$ increases. It is also
obvious that the orbit entering $P_1$ cannot cross the curve
\eqref{interm16} at some later point, thus $Y$ decreases with $X$
forever. In particular, there exists a limit $L>\beta/m$ such that
$$
Y=Y(X)\to-L, \qquad {\rm as} \ X\to\infty.
$$
Assume for contradiction that $L<\infty$. Then, for $X$ sufficiently
large (and $Y(X)$ then sufficiently close to $-L$), we have
$$
\left(\alpha+\frac{m}{2}L\right)X\leq(\alpha-mY)X-(mY^2+\beta
Y)\leq(\alpha+mL)X,
$$
and since $Y<0$ all along the orbit we are dealing with, we infer
that
$$
\frac{dY}{dX}\leq-\frac{(\alpha+mL/2)X}{2mX^2}=-\frac{2\alpha+mL}{4m}\frac{1}{X}.
$$
Integrating the previous differential inequality between $X=1$ and
$X=X_0>1$, we get
\begin{equation}\label{interm17}
Y(X_0)-Y(1)\leq-\frac{2\alpha+mL}{4m}\log X_0.
\end{equation}
As $X_0>1$ is arbitrarily chosen, passing to the limit as
$X_0\to\infty$ in \eqref{interm17} we obtain
$$
-L\leq Y(1)-\frac{2\alpha+mL}{4m}\lim\limits_{X_0\to\infty}\log
X_0=-\infty,
$$
and a contradiction. It follows that $Y\to-\infty$ on the orbit
entering $P_1$ contained in the plane $Z=0$.

\medskip

\noindent \textbf{Step 2. Critical point at infinity.} By the
previous analysis, it follows that the orbit entering $P_1$
contained in the plane $Z=0$ of \eqref{PSsyst2} comes from a
critical point at infinity for which $Z=0$. From the classification
performed in Lemmas \ref{lem.inf1}, \ref{lem.inf2} and
\ref{lem.inf3}, we find that the only critical point at infinity
from which it may come is $Q_1$, whose analysis is performed in
Lemma \ref{lem.inf1}. The fact that $Q_1$ is an unstable node
together with the theorem of continuity with respect to data and
parameters give that there exists $\epsilon_0>0$ such that for any
$\epsilon\in(0,\epsilon_0)$, there exists a connection between the
critical points $Q_1$ and $P_1$ in the whole phase space with
$Z(\xi)<\epsilon$ for $\xi\in(0,\infty)$. We will analyze next the
profiles contained in these orbits.

\medskip

\noindent \textbf{Step 3. Analysis of the profiles.} Since along the
orbits under consideration, we have $Z(\xi)<\epsilon$, it first
follows that
$$
\xi^{\sigma}f^{p-1}(\xi)<\epsilon, \qquad {\rm for \ any} \
\xi\in(0,\infty).
$$
In particular, choosing $\epsilon\in(0,\alpha)$ sufficiently small,
it follows from Lemma \ref{lem.max} that $f(\xi)$ decreases for
$\xi\in(0,\eta)$, where we recall that we denote by $\eta$ the
interface point of the profile $f$. Indeed, for any
$\epsilon\in(0,\alpha)$, the inequality \eqref{eq.max} is false,
hence $f$ does not admit local maximum points. Moreover, coming back
to the original equation \eqref{SSODE}, one finds that
\begin{equation}\label{interm17}
(f^m)''(\xi)=f(\xi)(\alpha-\xi^{\sigma}f^{p-1}(\xi))-\beta\xi
f'(\xi)\geq Lf(\xi), \quad L=\alpha-\epsilon,
\end{equation}
where we took into account that $f'(\xi)\leq0$ for $\xi\in(0,\eta)$.
By letting first $g=f^m$ \eqref{interm17} and then doing a standard
integration in two steps in the resulting differential inequality
(taking into account that $g'<0$), we find that
\begin{equation*}
f(\xi)\geq K(\eta-\xi)^{2/(m-1)}, \qquad {\rm for} \ \xi\in(0,\eta),
\end{equation*}
for a constant $K>0$ (that can be explicitly calculated, but we omit
its expression) depending on $m$, $\alpha-\epsilon$ and the initial
value $a=f(0)$. In particular,
$$
\xi^{\sigma}K^{p-1}(\eta-\xi)^{2(p-1)/(m-1)}<\epsilon, \qquad {\rm
for} \ \xi\in(0,\eta),
$$
and we infer by evaluating this inequality at $\xi=\eta/2$ that
\begin{equation}\label{interm18}
K^{p-1}\left(\frac{\eta}{2}\right)^{\sigma+2(p-1)/(m-1)}<\epsilon.
\end{equation}
We deduce from \eqref{interm18} that $\eta$ is forced to be as small
as we want, provided $\epsilon>0$ is chosen sufficiently small.
Noticing next that the following transformation:
$$
X=xz^{-2}, \qquad Y=yz^{-1}, \qquad Z=x^{(p-1)/(m-1)}z^{\sigma},
$$
is a diffeomorphism between the following regions of the phase
planes associated to the systems \eqref{PSsyst2}, respectively
\eqref{PSsyst1}:
$$
\left\{X>0, Y\in\real, Z>0\right\}\longmapsto\left\{x>0, y\in\real,
z>0\right\},
$$
mapping the orbits entering $P_1$ in the phase plane associated to
the system \eqref{PSsyst2} into the unique orbits entering the
points on the critical line $x=0$, $my+\beta z=0$ in the phase-space
associated to the system \eqref{PSsyst1}, we end up the proof as in
the last part of Step 3 in the proof of Proposition \ref{prop.far},
we leave these details to the reader.
\end{proof}
With all these preparations, we are now ready to prove the existence
theorem.
\begin{proof}[Proof of Theorem \ref{th.exist}]
Denote by $A_{-}$ the set of $\eta\in(0,\infty)$ such that the
profile $f_{\eta}$ with interface exactly at $\xi=\eta$ satisfies
$$
f_{\eta}(0)=a>0, \qquad f_{\eta}'(0)<0,
$$
and by $A_+$ the set of $\eta\in(0,\infty)$ such that there exists
$\xi_0\in(0,\eta)$ with
$$
f_{\eta}(\xi_0)=0, \qquad f_{\eta}'(\xi_0)>0.
$$
It is easy to see (by continuity with respect to the parameters)
that $A_-$ and $A_+$ are both open sets. Moreover, Proposition
\ref{prop.close} insures that $A_-\neq\emptyset$, while Proposition
\ref{prop.far} insures that $A_+\neq\emptyset$ and there exists an
interval $(\eta^*,\infty)\subseteq A_+$. Let then
$$
\eta_0=\sup A_-<\eta^*<\infty.
$$
We want to analyze the behavior of the unique profile
$f_{\eta_0}(\xi)$ having an interface at $\xi=\eta_0$. First of all,
since both $A_+$ and $A_-$ are open sets, $\eta_0$ does not belong
to any of them.

Moreover, $f_{\eta_0}$ cannot have a vertical asymptote at $\xi=0$, since according to the local analysis in Section \ref{sec.local}, there is no such behavior in the phase space.

Moreover, there is no point $\xi_1\in(0,\eta_0)$ such that
$f_{\eta_0}(\xi_1)=0$ and $f_{\eta_0}>0$ in $(\xi_1,\eta_0)$.
Indeed, assuming for contradiction that such $\xi_1$ exists,

\indent $\bullet$ either $f_{\eta_0}'(\xi_1)>0$, meaning that
$\eta_0\in A_+$. As $A_+$ is open, there exists $\delta>0$ such that
$(\eta_0-\delta,\eta_0)\subseteq A_+$. But by the definition of
$\eta_0$ as supremum of $A_-$, it follows that $A_+\cap
A_-\neq\emptyset$, and a contradiction.

\indent $\bullet$ or $f_{\eta_0}'(\xi_1)=0$, which is a
contradiction as such behavior does not exist in the phase space
system \eqref{PSsyst2} as it follows from Section \ref{sec.local}.

From these facts and the continuity with respect to the parameter we
deduce that $f_{\eta_0}$ intersects the axis $\xi=0$ either at the
origin (which is one case of "good solution") or at some point
$a>0$. In the latter case, we easily find by continuity that
$f_{\eta_0}'(0)\leq0$, and since $\eta_0\not\in A_-$, the case
$f_{\eta_0}'(0)<0$ is impossible, whence we reach the conclusion.
\end{proof}

\section{Self-similar blow up profiles for $\sigma$
small}\label{sec.small}

In this section, we deal with the blow up profiles to Eq.
\eqref{eq1} for $\sigma>0$ sufficiently small, in particular proving
Theorems \ref{th.small} and \ref{th.decay}. Let us recall that, by
Theorem \ref{th.exist}, there exists at least a good profile with
interface, that is, a solution $f$ to \eqref{SSODE} satisfying one
of the hypothesis (P1) and (P2). Our next goal is to prove that, for
$\sigma>0$ sufficiently small, only condition (P1) occurs.
\begin{proof}[Proof of Theorem \ref{th.small}]
Assume for contradiction that the statement of Theorem
\ref{th.small} is not true, that is, there exists a sequence
$\{\sigma_n\}_{n}$ such that $\sigma_n\to0$ as $n\to\infty$ and
corresponding good profiles with interface of type (P2) denoted by
$f_n$ with corresponding interface points $\eta_n\in(0,\infty)$. We
first prove the following important technical step.
\begin{lemma}\label{lem.unibound}
There exists $\sigma_0>0$ and constants $C_1$, $C_2>0$ which are
independent of $\sigma$ such that $C_1<\eta<C_2$ for any good
profile $f_{\eta}$ to \eqref{SSODE} with exponent
$\sigma\in(0,\sigma_0)$ and with interface at point
$\eta\in(0,\infty)$.
\end{lemma}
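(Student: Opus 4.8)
The plan is to bound $\eta$ above and below by constants depending only on $m$ and $p$ (through the coefficients $\alpha,\beta$ of \eqref{SSexp}, which are continuous and nondegenerate on $\sigma\in[0,\sigma_0]$), treating the two bounds by different mechanisms. Throughout, for a good profile $f=f_\eta$ with interface at $\eta$ I denote by $\xi_M\in(0,\eta)$ a point where $f$ attains its maximum $M:=f(\xi_M)>0$; such a point exists because at $\xi=0$ the profile either satisfies $f(0)=a>0,\ f'(0)=0$ (and then increases, by Lemma \ref{lem.monot}) or satisfies $f(0)=0$ with a positive local behavior, while $f(\eta)=0$.

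\textbf{Lower bound.} The key is a pair of matching estimates for $M$. Integrating \eqref{SSODE} from $\xi$ to the interface and using $f(\eta)=0$, $(f^m)'(\eta)=0$ yields the exact identity
$$(f^m)'(\xi)=-(\alpha+\beta)\int_\xi^\eta f-\beta\xi f(\xi)+\int_\xi^\eta s^\sigma f(s)^p\,ds,$$
so that, integrating once more from $\xi_M$ to $\eta$, discarding the nonpositive contribution of the reaction term and using $0\le f\le M$,
$$M^m=-\int_{\xi_M}^\eta (f^m)'(\xi)\,d\xi\le (\alpha+2\beta)\,M\,\eta^2,$$
whence $M\le(\alpha+2\beta)^{1/(m-1)}\eta^{2/(m-1)}$. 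On the other hand, Lemma \ref{lem.max} applied at $\xi_M$ gives $M\ge\alpha^{1/(p-1)}\xi_M^{-\sigma/(p-1)}$. If $\eta\le 1$ then $\xi_M\le 1$ and the weight is harmless, so $M\ge\alpha^{1/(p-1)}$; combining the two bounds gives $\eta\ge\big(\alpha^{1/(p-1)}/(\alpha+2\beta)^{1/(m-1)}\big)^{(m-1)/2}$. Taking $C_1$ to be the minimum of $1$ and the infimum of this quantity over $\sigma\in(0,\sigma_0)$ produces the uniform lower bound, since $\alpha,\beta$ stay bounded and positive as $\sigma\to0$.

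\textbf{Upper bound.} Here the maximum estimates are not enough, since for large $\eta$ both bounds on $M$ remain compatible. Instead I would upgrade Proposition \ref{prop.far} to a statement uniform in $\sigma$: that proposition shows that for $\eta>\eta^*(\sigma)$ the backward trajectory $f_\eta$ crosses zero with positive slope and so fails to be a good profile, whence every good profile satisfies $\eta\le\eta^*(\sigma)$. It therefore suffices to bound $\eta^*(\sigma)$ from above uniformly. The argument of Proposition \ref{prop.far} rests on the connection $Q_2\to P_1$ with $X<\epsilon$, on $Q_2$ being an unstable node, and on the conversion \eqref{interm14}. I would exploit that the linearization at $Q_2$ in Lemma \ref{lem.inf2} is $\sigma$-independent (eigenvalues $-m^2,-mp,-m$) and that the reduced planar system \eqref{interm10} governing the limit connection in $\{X=0\}$ has coefficients independent of, or bounded uniformly in, $\sigma$; this allows the continuity-in-parameters threshold $\epsilon_0$ to be chosen uniformly on $(0,\sigma_0)$, while in \eqref{interm14} the exponent and the factor $\alpha^{1/(p-1)}$ are likewise uniformly bounded. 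Fixing one such $\epsilon$ then yields a finite $C_2$, independent of $\sigma$, beyond which all interfaces belong to sign-changing connections.

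\textbf{Main obstacle.} The delicate point is exactly this uniformity of the upper bound: one must ensure that neither the threshold $\epsilon_0$ coming from the continuous dependence near $Q_2$ nor the resulting $\eta^*$ degenerates as $\sigma\to0$. The $\sigma$-independence of $M(Q_2)$ and the uniform control of the system \eqref{interm10} make this plausible, but turning the qualitative continuity argument of Proposition \ref{prop.far} into an estimate that is explicitly uniform over the whole one-parameter family of connections requires care. By contrast, the lower bound is entirely explicit and manifestly uniform, so I expect the effort to concentrate on the upper estimate.
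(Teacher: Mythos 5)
Your lower bound is correct, and it is a genuinely different argument from the paper's. The double integration of \eqref{SSODE} between an interior maximum $\xi_M$ and the interface (the existence of $\xi_M$ is justified exactly as you say, via Lemma \ref{lem.monot} in case (P1)), combined with Lemma \ref{lem.max} and the fact that $\alpha$, $\beta$ stay bounded and positive on $[0,\sigma_0]$, yields the explicit uniform bound $\eta\geq\min\bigl\{1,\bigl(\alpha^{1/(p-1)}/(\alpha+2\beta)^{1/(m-1)}\bigr)^{(m-1)/2}\bigr\}$. The paper obtains this half softly instead, from Proposition \ref{prop.close} at each fixed $\sigma$ plus continuity in the parameter; your version is more quantitative and self-contained, and is a valid alternative for that half.

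The upper bound, however, has a genuine gap, located exactly where you placed your ``main obstacle'' caveat, and the repair you sketch does not close it. In Proposition \ref{prop.far}, the quantities you propose to make uniform in $\sigma$ --- the threshold $\epsilon_0$ from the continuity argument near $Q_2$, and the exponent and constant in \eqref{interm14} --- control only the \emph{special} connections from $Q_2$ to $P_1$ along which $X<\epsilon$: inequality \eqref{interm14} says that \emph{those particular orbits} have large interface points. This produces sign-changing profiles with arbitrarily large interface; it does not produce the converse statement that \emph{every} profile with sufficiently large interface is sign-changing. That converse --- the actual content of Proposition \ref{prop.far}, with its threshold $\eta^*(\sigma)$ --- comes from the separate, purely topological step at the end of Step 3 of its proof (openness of the set of interface points belonging to $Q_2\to P_1$ connections), where $\eta^*$ is produced non-constructively and bears no stated relation to $\epsilon_0$ or to the constants in \eqref{interm14}. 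Consequently, ``fixing one such $\epsilon$'' does not yield any finite $C_2$, uniform or otherwise; what would have to be made uniform in $\sigma$ is the topological step itself, and nothing in your sketch addresses that.

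It is instructive to compare with what the paper actually does, because it bypasses quantitative uniformity altogether and rests on an ingredient absent from your proposal: it invokes Propositions \ref{prop.close} and \ref{prop.far} only for each fixed $\sigma$, obtaining $C_1(\sigma)<\eta<C_2(\sigma)$, and then uses continuity of the system \eqref{PSsyst1} with respect to the parameter $\sigma$ \emph{down to $\sigma=0$}, together with the uniqueness of the good profile with interface in the homogeneous case (\cite[Theorem 2, p. 187]{S4}), i.e.\ $C_1(0)=C_2(0)=\eta_0$. Anchoring at the fully classified case $\sigma=0$ is what rules out interface points escaping to $0$ or $\infty$ along a sequence $\sigma_n\to0$; some input of this kind (or a genuinely quantitative substitute for the openness step in Proposition \ref{prop.far}) seems unavoidable for your upper estimate.
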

\begin{proof}
For $\sigma=0$, there exists a unique good profile $f_{\eta_0}$ with
interface at some $\eta_0\in(0,\infty)$ \cite[Theorem 2, p. 187 and
Lemma 3, pp. 264-265]{S4}. We also infer from Propositions
\ref{prop.close} and \ref{prop.far} that for any $\sigma>0$ fixed
there exist constants $C_1(\sigma)<C_2(\sigma)\in(0,\infty)$ such
that for any good profile $f_{\eta}$, his interface point $\eta$
satisfies
$$
C_1(\sigma)<\eta<C_2(\sigma).
$$
Thus we readily reach the conclusion from the continuity with
respect to the parameter $\sigma$ in the system \eqref{PSsyst1}
\cite[Theorem 3, Chapter 15]{HS} (valid up to $\sigma=0$) and the
fact that $C_1(0)=C_2(0)=\eta_0$.
\end{proof}
We obtain from Lemma \ref{lem.unibound} that there exists a positive
integer $n_0$ such that
\begin{equation}\label{interm21}
C_1<\eta_n<C_2, \qquad {\rm for} \ n\geq n_0.
\end{equation}
Without losing on generality, we may relabel the sequences
$\{\sigma_n\}_{n}$ and $\{f_n\}_{n}$ such that $n_0=1$, thus
\eqref{interm21} holds true for any $n\geq1$. Moreover, since
$\eta_n$ is uniformly bounded, it has a convergent subsequence, so
that we can once more relabel (retaining only a subsequence) the
sequences $\{\sigma_n\}_{n}$ and $\{f_n\}_{n}$ such that
$\eta_n\to\eta_0$ as $n\to\infty$. We need now a second technical
step.
\begin{lemma}\label{lem.unibound2}
There exists a positive integer $n_1\geq1$ such that the sequence
$\{f_n\}_{n}$ is uniformly bounded (independent of $\sigma$) for
$n\geq n_1$.
\end{lemma}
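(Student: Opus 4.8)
The plan is to bound the global maximum $M_n:=\max_{[0,\eta_n]}f_n$ uniformly in $n$, using only Eq.~\eqref{SSODE}, the uniform interface bound $\eta_n<C_2$ supplied by Lemma~\ref{lem.unibound}, and the structural inequality $1<p<m$. Since each $f_n$ is a good profile of type (P2) with interface at $\eta_n$, we have $f_n(0)=f_n(\eta_n)=0$ and $f_n>0$ on $(0,\eta_n)$; hence $M_n=f_n(\xi_M)$ is attained at an interior point $\xi_M=\xi_{M,n}\in(0,\eta_n)$, where $f_n'(\xi_M)=0$ and therefore $(f_n^m)'(\xi_M)=0$. I would also record that, because $\sigma_n\to0$, the exponents $\alpha=\alpha_n$ and $\beta=\beta_n$ from \eqref{SSexp} stay bounded, and $s^{\sigma_n}\le C:=\max(1,C_2)^{\sigma_0}$ for all $s\in[0,C_2]$.

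First I would integrate \eqref{SSODE} once from $\xi_M$ to an arbitrary $\xi\in(\xi_M,\eta_n)$. Dropping the nonnegative contribution $\alpha\int_{\xi_M}^\xi f_n\,ds\ge0$, bounding the reaction term by $\int_{\xi_M}^\xi s^{\sigma_n}f_n^{\,p}\,ds\le C M_n^p(\xi-\xi_M)$, and treating the transport term by integration by parts, which gives $-\beta\int_{\xi_M}^\xi s f_n'\,ds=-\beta\,s f_n\big|_{\xi_M}^{\xi}+\beta\int_{\xi_M}^\xi f_n\,ds\ge-\beta C_2 M_n$, I obtain the one-sided estimate
\[
(f_n^m)'(\xi)\ge -\beta C_2 M_n - C M_n^p(\xi-\xi_M).
\]
Integrating once more from $\xi_M$ to $\eta_n$, using $f_n(\eta_n)=0$ and $\eta_n-\xi_M\le\eta_n<C_2$, yields
\[
0=f_n^m(\eta_n)\ge M_n^m-\beta C_2^2 M_n-\tfrac{C}{2}C_2^2 M_n^p .
\]

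The conclusion is then purely algebraic: since $1<p<m$, dividing by $M_n^p$ (assuming $M_n\ge1$, otherwise the bound is trivial) gives $M_n^{m-p}\le \beta C_2^2 M_n^{1-p}+\tfrac{C}{2}C_2^2\le C_2^2(\beta+\tfrac{C}{2})$, whence $M_n\le[C_2^2(\beta_n+C/2)]^{1/(m-p)}$. Because $\beta_n$ converges to $(m-p)/2(p-1)$ and $C,C_2$ are fixed, this bound is independent of $n$, so the claim holds with any $n_1$ large enough that $\sigma_n<\sigma_0$ and $C_1<\eta_n<C_2$.

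The delicate points I anticipate are essentially bookkeeping: I must justify that the two integrations are legitimate up to the interface, where \eqref{SSODE} degenerates, which follows from the interface behavior \eqref{IFeq} recorded in Lemma~\ref{lem.2} and Proposition~\ref{prop.uniqFB}, and check that the maximum is genuinely interior so that the boundary term $(f_n^m)'(\xi_M)$ vanishes. I want to emphasize that \emph{no monotonicity of $f_n$ is used} — which is crucial, since by Lemma~\ref{lem.monot} these profiles need not be monotone — precisely because the transport term is controlled globally via integration by parts rather than pointwise through a sign condition on $f_n'$. The single conceptual ingredient is the gap $m-p>0$, which makes the diffusion term $M_n^m$ outweigh both the reaction ($M_n^p$) and transport ($M_n$) contributions over the uniformly bounded support.
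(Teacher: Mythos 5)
Your argument is correct, and it takes a genuinely different route from the paper. The paper proves Lemma \ref{lem.unibound2} by invoking continuous dependence on the parameter $\sigma$ in the system \eqref{PSsyst1} (via \cite[Theorem 3, Chapter 15]{HS}), comparing $f_n$ with the limiting profile $f_0$ for $\sigma=0$ and inheriting the bound from $f_0\in L^{\infty}$; the authors themselves note this is only formal at first (one cannot take a critical point as initial datum) and must be repaired by starting from the intersection of the trajectories with a small ball around the interface point. Your proof replaces all of this with a direct double integration of \eqref{SSODE} from the interior maximum $\xi_M$ (where $(f_n^m)'(\xi_M)=0$) to the interface $\eta_n$, handling the transport term by integration by parts so that no monotonicity of $f_n$ is needed, and closing with the algebraic inequality $M_n^m\leq \beta C_2^2M_n+\tfrac{C}{2}C_2^2M_n^p$, which forces $M_n\leq[C_2^2(\beta_n+C/2)]^{1/(m-p)}$ because $m-p>0$. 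I checked the signs: dropping $\alpha\int f_n\geq0$ and bounding $-\beta\int s f_n'\geq-\beta C_2M_n$ both go in the right direction, and the second integration is legitimate since $f_n^m$ is $C^1$ up to the interface by \eqref{IFeq}. What your approach buys is self-containedness and robustness: it uses only the interface bound $\eta_n<C_2$ from Lemma \ref{lem.unibound} and boundedness of $\sigma_n$ and $\beta_n$ (not $\sigma_n\to0$, nor the $\sigma=0$ existence theory from \cite{S4}), and it yields an explicit bound; what the paper's approach buys is that the same continuity machinery is reused immediately afterwards to identify the limit $g$ of the sequence, so it integrates more smoothly into the surrounding compactness argument.
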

\begin{proof}
We use once more the continuity with respect to the parameter
$\sigma>0$ (and to the data) in the non-autonomous first order
system \eqref{PSsyst1} (see for example \cite[Theorem 3, Chapter
15]{HS}). Let $f_0$ be the unique profile to \eqref{SSODE} with
$\sigma=0$ having interface at $\xi=\eta_0$. At a formal level, we
know that $f_0\in L^{\infty}[0,\infty)$ and by the above mentioned
result, for any given $\delta>0$, there exist $n_{\delta}\geq1$ and
$K>0$ such that for any $n\geq n_{\delta}$ and
$\xi\in[0,\max\{\eta_0,\eta_n\}]$ we have
\begin{equation}\label{intermX}
|f_n(\xi)-f_0(\xi)|\leq\delta\left(K\exp{|\xi-\eta_0|}-1\right).
\end{equation}
This is only formal, as it cannot be applied rigorously when taking
as initial data a critical point in \eqref{PSsyst1}. But it can be
made rigorous by taking a very small ball $B(\eta_0,r_0)$ around the
point $\xi=\eta_0$, which contains an infinity of the points
$\eta_n$, and taking as data the intersections of the trajectories
in the phase plane associated to the system \eqref{PSsyst1} with
this ball. The conclusion follows obviously from \eqref{intermX} and
the fact that $f_0\in L^{\infty}[0,\infty)$.
\end{proof}
Relabeling the sequences such that $n_1=1$ in order to simplify the
notation, we deduce from Lemma \ref{lem.unibound2} and \eqref{SSODE}
that
$$
(f_n^m)''(\xi)+\beta_n\xi f_n'(\xi)=\alpha_n
f_n(\xi)-\xi^{\sigma_n}f_n(\xi)^p\in[-K,K], \qquad \xi\in[0,C_2]
$$
for some $K>0$ sufficiently large but independent of $\sigma$. Here
and in the sequel, we denote by
$$
\alpha_n:=\frac{\sigma_n+2}{2(p-1)+\sigma_n(m-1)}, \quad
\beta_n:=\frac{m-p}{2(p-1)+\sigma_n(m-1)}
$$
the self-similarity exponents corresponding to our sequence
$\{\sigma_n\}_n$. We thus find that
$$
-K\leq(f_n^m)''(\xi)+\beta_n\xi f_n'(\xi)\leq K, \qquad
\xi\in[0,C_2],
$$
whence by integration by parts on $[0,\xi]$,
\begin{equation}\label{interm22}
|(f^m)'(\xi)|\leq K\xi+\left|\beta_n\xi f_n(\xi)-\int_0^{\xi}\beta_n
f_n(s)\,ds\right|\leq\overline{K}, \qquad \xi\in[0,C_2],
\end{equation}
for $\overline{K}>0$ independent of $\sigma$, where we have used
once more for the last inequality in \eqref{interm22} Lemmas
\ref{lem.unibound} and \ref{lem.unibound2}. Since
$$
f_n'(\xi)=\frac{1}{m}(f_n^m)'(\xi)f_n(\xi)^{1-m}, \qquad
\xi\in[0,C_2]
$$
it follows that $(f_n)'$ is uniformly bounded (independent of
$\sigma_n$) far from $\xi=0$ and $\xi=\eta_n$. Notice for example
that $(f_n)'$ may not be uniformly bounded at $\xi=0$ or
$\xi=\eta_n$ (a closer inspection of the behavior of $f_n$ shows
that this is only true when $1<m\leq2$). From \eqref{SSODE} we also
deduce that
\begin{equation}\label{interm23}
(f_n^m)''(\xi)=\alpha_n
f_n(\xi)-\beta_n\xi(f_n)'(\xi)+\xi^{\sigma_n}f_n(\xi)^p, \qquad
\xi\in[0,C_2],
\end{equation}
whence on the one hand, recalling that we are under the assumption
that $f_n(0)=0$, $(f_n^m)'(0)=0$ and that there are only two
possible behaviors for $f$ as $\xi\to0$, established in Lemmas
\ref{lem.1} and \ref{lem.3}, we infer that $(f_n^m)''(0)=0$ whatever
the behavior of $f_n$ is (among the two possibilities). On the other
hand, we also get from \eqref{interm23} that $(f_n^m)''$ is
uniformly bounded on any interval $[0,\xi]$ with $\xi<\eta_n$. Even
more, we derive from \eqref{interm23} that $(f_n^m)''$ is uniformly
Holder in any interval $[0,\xi]$ with $\xi<\eta_n$, as the right
hand side of \eqref{interm23} has obviously this property.

Using the Arzel\`a-Ascoli Theorem (and in particular the compact
embedding of the Holder space $C^{2,\gamma}[0,\xi]$ in $C^2[0,\xi]$
for any $\gamma\in(0,1)$), we find that there exist functions $g_1$,
$g_2$ and $g_3$ such that for any $0<\xi_1<\xi_2<C_2$, the following
hold true:

\indent $\bullet$ $f_n\longmapsto g_1$ uniformly in $[0,\xi_2]$,

\indent $\bullet$ $(f_n)'\longmapsto g_2$ uniformly in
$[\xi_1,\xi_2]$

\indent $\bullet$ $(f_n^m)''\longmapsto g_3$ uniformly in
$[0,\xi_2]$.

It is now easy to identify the functions $g_1$, $g_2$ and $g_3$. In
a first step, the first convergence in the list above holds also
pointwisely for any $\xi\in(0,\eta_0)$, where we recall that
$\eta_0=\lim\limits_{n\to\infty}\eta_n$ according to a convention we
made at the beginning of the proof. Thus, one readily gets that
$g_1(0)=g_1(\eta_0)=0$ and that $g_1$ is supported in $[0,\eta_0]$.
Moreover, using the uniform convergence and the definition of the
derivative, we have for any $\xi\in(0,\eta_0)$:
\begin{equation*}
\begin{split}
g_2(\xi)&=\lim\limits_{n\to\infty}f_n'(\xi)=\lim\limits_{n\to\infty}\left(\lim\limits_{h\to0}\frac{f_n(\xi+h)-f_n(\xi)}{h}\right)\\
&=\lim\limits_{h\to0}\left(\lim\limits_{n\to\infty}\frac{f_n(\xi+h)-f_n(\xi)}{h}\right)=\lim\limits_{h\to0}\frac{g_1(\xi+h)-g_1(\xi)}{h}=g_1'(\xi).
\end{split}
\end{equation*}
By a similar argument of commuting limits, one can also show that
$$
g_3(\xi)=(g_1^m)''(\xi), \qquad \xi\in(0,\eta_0).
$$
Thus, relabeling for simplicity $g_1\equiv g$, we can pass to the
limit as $n\to\infty$ in \eqref{SSODE} (applied to $f_n$) to show
that $g$ solves the ordinary differential equation
$$
(g^m)''(\xi)+\beta(0)\xi g'(\xi)-\alpha(0)g(\xi)+g^p(\xi)=0, \qquad
\alpha(0)=\frac{1}{p-1}, \ \beta(0)=\frac{m-p}{2(p-1)},
$$
for any $\xi\in(0,\eta_0)$. Moreover, ${\rm supp}\,g=[0,\eta_0]$ and
the interface condition at $\xi=\eta_0$ is fulfilled by $g$:
$$
(g^m)'(\eta_0)=\lim\limits_{n\to\infty}(g^m)'(\eta_n)=\lim\limits_{n\to\infty}(f_n^m)'(\eta_n)=0,
$$
where the second equality was allowed by the uniform convergence of
$(f_n^m)'$ to $(g^m)'$ in any compact interval included in
$(0,\eta_0)$. It thus follows that $g$ is a solution to the
homogeneous equation corresponding to \eqref{SSODE} for $\sigma=0$,
with interface point at $\eta_0$ and with $g(0)=0$, $(g^m)'(0)=0$.

We still need to prove that the limit function $g$ is not the
trivial function. To this end, recalling that
$f_n(0)=f_n(\eta_n)=0$, let $z_n\in(0,\eta_n)$ be such that
$$
f_n(z_n)=\max\limits_{\xi\in(0,\eta_n)}f_n(\xi).
$$
We then infer from Lemma \ref{lem.max} that
$$f_n(z_n)\geq\alpha_n^{1/(p-1)}z_n^{-\sigma_n/(p-1)},$$
hence passing to the limit and taking into account the uniform
convergence on compact sets in $(0,\eta_0)$, implies
$$
\|g\|_{\infty}\geq\alpha(0)^{1/(p-1)}=\left(\frac{1}{p-1}\right)^{1/(p-1)}>0,
$$
and a contradiction to well-established results concerning the
homogeneous case $\sigma=0$, which show that such solution $g$ as
above does not exist \cite[Theorem 2, p. 187 and Lemma 3, pp.
264-265]{S4}.
\end{proof}

\section{Self-similar profiles with $f(0)=0$}\label{sec.5}

We devote this section to the study of self-similar profiles satisfying property (P2) in Definition \ref{def1}, that is, profiles $f(\xi)$ solving \eqref{SSODE} with initial conditions $f(0)=0$, $(f^m)'(0)=0$. In particular, we prove here Theorem \ref{th.large}. Let us recall here with the title of an example the following exact solution with interface to \eqref{SSODE} for the limit case $p=1$, already established in \cite[Theorem 1.2]{IS1}, satisfying property (P2) in Definition \ref{def1}
\begin{equation}\label{sol.exp}
f_1(\xi):=\xi^{2/(m-1)}\left(\frac{m-1}{2m(m+1)}-B\xi^{\sigma}\right)_{+}^{1/(m-1)}, \qquad B=\frac{(m-1)^2}{m(\sigma+2)(m\sigma+m+1)},
\end{equation}
where $\sigma=\sqrt{2(m+1)}$. We show here that such solutions exist also for $p\in(1,m)$, although they are no longer explicit. We refer the reader to our companion work \cite{IS1} for a complete study of the interesting case $p=1$.

\subsection{General properties of self-similar profiles with $f(0)=0$}

Before passing to the actual proof of Theorem \ref{th.large}, we gather here several facts related to profiles $f$ solutions to \eqref{SSODE} with $f(0)=0$, $(f^m)'(0)=0$. Besides their use in the next subsections to help with the proof of Theorem \ref{th.large}, we think that these results have interest by themselves. Throughout all this subsection, $f$ is assumed to be a self-similar profile with $f(0)=0$, $(f^m)'(0)=0$, and by $\xi_0$ we will understand any (local) maximum point of $f$.
\begin{proposition}\label{lem.unifbound}
In the previous notation and conditions

\begin{itemize}
\item[(a)] The following upper bound holds true:
\begin{equation}\label{upper}
f(\xi)\leq\left[\frac{\alpha(m-1)}{2m}\right]^{1/(m-1)}\xi^{2/(m-1)}, \qquad {\rm for \ any} \ \xi>0.
\end{equation}
\item[(b)] Recalling that $\xi_0$ is a maximum point of $f$, we have
\begin{equation}\label{unif.max}
\xi_0\geq\overline{\xi}:=\alpha^{\beta}\left[\frac{2m}{m-1}\right]^{(p-1)\beta/(m-p)}.
\end{equation}
\end{itemize}
\end{proposition}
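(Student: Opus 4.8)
The plan is to prove the two bounds in sequence, part (b) being a short algebraic consequence of part (a). For part (a), I would work in the quadratic phase space \eqref{PSsyst2} and control the variable $Y(\xi)=\xi^{-1}f^{m-2}(\xi)f'(\xi)$ from \eqref{PSchange2}. The key reformulation is that \eqref{upper}, after raising to the power $m-1$, is equivalent to the pointwise inequality $f^{m-1}(\xi)\le\frac{\alpha(m-1)}{2m}\xi^2$. Since the hypothesis $f(0)=0$ gives $f^{m-1}(0)=0$, and since $(f^{m-1})'(\xi)=(m-1)\xi\,Y(\xi)$, this inequality will follow by integration from $0$ to $\xi$ once I establish the single barrier
\[
Y(\xi)\le\frac{\alpha}{m},\qquad \xi>0 .
\]
Thus all of part (a) reduces to this bound on $Y$.

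To prove $Y\le\alpha/m$ I would run a first-crossing argument along the flow. Evaluating the second equation of \eqref{PSsyst2} on the level set $\{Y=\alpha/m\}$, a direct computation gives $\dot Y=-\frac{\alpha(\alpha+\beta)}{m}-XZ$, which is \emph{strictly negative} for all admissible $X,Z\ge0$; in particular the first-order contributions conspire to cancel on this level set. Hence $Y$ can never cross $\alpha/m$ upward. It remains to verify that $Y$ starts below $\alpha/m$: for a profile of type (P2) the local analysis near $\xi=0$ (Lemmas \ref{lem.1} and \ref{lem.3}) shows that either $f$ behaves as in \eqref{Beh02}, whence $Y(\xi)\to0$, or $f$ behaves as in \eqref{Beh01}, whence $Y(\xi)\to\frac{1}{m(m+1)}$; using the explicit exponents \eqref{SSexp} one checks $\alpha>\frac{1}{m+1}$, so in both cases the limiting value of $Y$ is strictly below $\alpha/m$. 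Therefore, if $Y$ reached $\alpha/m$ at some first point $\xi_1>0$ we would have $\tfrac{dY}{d\xi}(\xi_1)\ge0$, and since $d\eta/d\xi>0$ this means $\dot Y(\xi_1)\ge0$, contradicting $\dot Y(\xi_1)<0$. This proves the barrier, and integrating $(f^{m-1})'=(m-1)\xi Y\le\frac{\alpha(m-1)}{m}\xi$ yields \eqref{upper}.

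For part (b), I would combine \eqref{upper} with the lower bound at maxima from Lemma \ref{lem.max}. Evaluating both at a local maximum point $\xi_0$ gives the two-sided estimate
\[
\alpha^{1/(p-1)}\xi_0^{-\sigma/(p-1)}\le f(\xi_0)\le\left[\frac{\alpha(m-1)}{2m}\right]^{1/(m-1)}\xi_0^{2/(m-1)} .
\]
The left-hand side decreases and the right-hand side increases in $\xi_0$, so the inequality forces $\xi_0$ from below. Collecting the powers of $\xi_0$ produces the exponent $\frac{2}{m-1}+\frac{\sigma}{p-1}=\frac{2(p-1)+\sigma(m-1)}{(m-1)(p-1)}$, and solving for $\xi_0$ while simplifying with the identities in \eqref{SSexp} (in particular $\beta=(m-p)/[2(p-1)+\sigma(m-1)]$) reproduces exactly the constant $\overline{\xi}$ of \eqref{unif.max}. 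This last step is a purely algebraic manipulation that I would carry out at the end.

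The only genuinely delicate point is part (a). A naive comparison of $f$ with the power barrier $[\frac{\alpha(m-1)}{2m}]^{1/(m-1)}\xi^{2/(m-1)}$ fails, because the degenerate diffusion and the first-order term $\beta\xi f'$ leave an uncontrollable sign at a contact point of $f-h$; attempting to bound $f$ at an interior maximum of $f/\xi^{2/(m-1)}$ only yields a lower-bound relation. Re-encoding \eqref{upper} as the barrier $Y\le\alpha/m$ is precisely what removes this obstruction, since on $\{Y=\alpha/m\}$ the offending terms cancel and leave a manifestly negative $\dot Y$. Verifying this cancellation and the strict negativity of $\dot Y$ (together with checking $\alpha>1/(m+1)$, which also places $P_2$ below the barrier) is the heart of the argument; everything else is integration and elementary algebra.
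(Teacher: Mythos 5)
Your proposal is correct and follows essentially the same route as the paper: part (a) is proved there by exactly the same barrier argument on the plane $\{Y=\alpha/m\}$ (same computation of the flow sign $-\frac{\alpha^2+\alpha\beta}{m}-XZ<0$, same verification that the starting points $P_0$ and $P_2$ lie in the half-space $\{Y\le\alpha/m\}$ via $\alpha>1/(m+1)$, then integration of $(f^{m-1})'\le\frac{(m-1)\alpha}{m}\xi$), and part (b) is obtained there by the identical combination of \eqref{eq.max} and \eqref{upper} at the maximum point. Your closing algebra for $\overline{\xi}$ checks out, so nothing is missing.
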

Before proving Proposition \ref{lem.unifbound}, let us notice here that its part (a) can be seen as a uniform bound (with respect to the exponent $\sigma$) for $f$ over compact sets in $[0,\infty)$. Indeed, there is a dependence on $\alpha$ in the right hand side of \eqref{upper}, but recall that $\alpha=\alpha(\sigma)$ is uniformly bounded with respect to $\sigma$. We also infer from part (b) that a maximum point $\xi_0$ cannot lie as close as the $\xi=0$ as we want.
\begin{proof}[Proof of Proposition \ref{lem.unifbound}]
(a) We consider the plane $\{Y=\alpha/m\}$ in the phase space associated to the system \eqref{PSsyst2}. The direction of the flow of the phase space over the plane $\{Y=\alpha/m\}$ is given by the sign of the following expression:
$$
F(X,Z):=-m\left(\frac{\alpha}{m}\right)^2-\beta\frac{\alpha}{m}+\alpha X-m\frac{\alpha}{m}X-XZ=-\frac{\alpha^2+\alpha\beta}{m}-XZ<0,
$$
since $X$, $Z\geq0$. Thus, once an orbit in the phase space lies in the half-space $\{Y\leq\alpha/m\}$, it cannot cross the plane $\{Y=\alpha/m\}$. But in particular, any profile $f$ with $f(0)=0$ and $(f^m)'(0)=0$ belongs to an orbit starting from one of the points $P_0=(0,0,0)$ or $P_2=((m-1)/2m(m+1),1/m(m+1),0)$. Noticing that
$$
\sigma(m-1)+2(p-1)<(\sigma+2)(m-1)<(\sigma+2)(m+1),
$$
we readily find that both points $P_0$ and $P_2$ lie in the half-space $\{Y\leq\alpha/m\}$. This implies that along any orbit containing profiles $f$ with $f(0)=0$, $(f^m)'(0)=0$, we have $Y<\alpha/m$. Recalling the definition of $Y$ in \eqref{PSchange2}, we get
\begin{equation}\label{interm28}
(f^{m-1})'(\xi)\leq\frac{(m-1)\alpha}{m}\xi, \qquad \xi\in(0,\infty)
\end{equation}
whence by integrating over $(0,\xi)$ one readily obtains \eqref{upper}.

\medskip

%

\noindent (b) Since $\xi_0$ is a maximum point for $f$, we gather the estimates \eqref{eq.max} and \eqref{upper} to obtain
$$
\alpha^{1/(p-1)}\xi_0^{-\sigma/(p-1)}\leq f(\xi_0)\leq\left[\frac{\alpha(m-1)}{2m}\right]^{1/(m-1)}\xi_0^{2/(m-1)},
$$
which leads to the estimate \eqref{unif.max} after straightforward calculations.
\end{proof}
The following technical result shows that for any $\sigma>0$, there exist good profiles with $f(0)=0$ and with the tail behavior \eqref{queue}.
\begin{lemma}\label{lem.tail}
For any $p\in(1,m)$ and $\sigma>0$, there exists at least an orbit in the phase space associated to \eqref{PSsyst2} connecting the points $P_0$ and $P_{\gamma_0}$. These orbits contain profiles such that $f(0)=0$, $(f^m)'(0)=0$, and behaving as in \eqref{case2} at $\xi=0$ and as in \eqref{queue} as $\xi\to\infty$.
\end{lemma}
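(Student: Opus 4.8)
The plan is to run a shooting argument along the orbits issuing from $P_0$. By Lemma \ref{lem.1} the orbits leaving $P_0$ fill its two-dimensional center manifold and form a one-parameter family $\{O_k\}_{k>0}$, each carrying a profile with the local behaviour $f(\xi)\sim k\,\xi^{(\sigma+2)/(m-p)}$ of \eqref{case2}; since $m(\sigma+2)/(m-p)>1$, every such profile automatically satisfies $f(0)=0$ and $(f^m)'(0)=0$. Dually, by Lemma \ref{lem.4} any orbit reaching the attractor $P_{\gamma_0}$ carries a profile with the tail \eqref{queue}. Hence the local and asymptotic assertions of the lemma are automatic, and the whole point is to exhibit one $O_k$ that converges to $P_{\gamma_0}$.

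First I would confine the family to a compact set. Along any orbit carrying a profile with $f(0)=0$ one has $Y<\alpha/m$ by Proposition \ref{lem.unifbound}(a); the same bound forces $X$ to remain bounded, since $\dot X<0$ once $X>\tfrac{m-1}{2}Y$ in \eqref{PSsyst2}; and $Z=\xi^{\sigma}f^{p-1}$ stays bounded by Lemma \ref{lem.Q4}, no orbit being able to reach $Q_4$. Thus each $O_k$ remains, while its profile is positive, in a fixed compact subset of $\{X\ge0,\ Z\ge0,\ Y<\alpha/m\}$. An orbit can leave the region $\{X>0,Z>0\}$ only through $f\to0$, and this happens in exactly one of three ways: the profile develops an interface and the orbit enters $P_1$ (Lemma \ref{lem.2}); the profile changes sign and the orbit reaches the sink $Q_3$ at infinity (Lemma \ref{lem.inf2}); or the profile stays positive on all of $(0,\infty)$. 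In this last case the $\omega$-limit of $O_k$ is a nonempty compact connected invariant set of \eqref{PSsyst2} inside the above region, and inspecting the critical points one checks that the only admissible limit is $P_{\gamma_0}$: the orbit cannot return to $P_0$ (it is off its one-dimensional stable manifold), cannot limit to $P_2$ (whose stable manifold lies in $\{Z=0\}$, Lemma \ref{lem.3}) nor to $P_1$ (which is approached only at a finite interface), and the remaining points at infinity are repelling or excluded. Therefore $\mathcal T:=\{k>0:\ f_k>0\text{ on }(0,\infty)\}$ consists exactly of the parameters for which $O_k\to P_{\gamma_0}$.

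It remains to prove $\mathcal T\neq\emptyset$, which is the heart of the matter. The sign-change set is open, because $Q_3$ is a stable node (Lemma \ref{lem.inf2}), and $\mathcal T$ is open, because $P_{\gamma_0}$ is an attractor with an open basin (Lemma \ref{lem.4}); the interface parameters form the thin separating set. I stress that a mere bracketing between interface and sign-change would only produce an interface profile and not the tail we want, so $\mathcal T\neq\emptyset$ must be established directly. The route I would take is to construct a compact trapping region $R$ for \eqref{PSsyst2} that contains $P_{\gamma_0}$ as its unique equilibrium and across whose boundary the flow points strictly inward, built from the invariant half-space $\{Y<\alpha/m\}$, a slab controlling the sign of $\dot Z=mZ[(p-1)Y+\sigma X]$ around $Z=\gamma_0$, and a cap bounding $X$; I would then show that at least one orbit $O_k$ crosses into $R$. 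Once inside, the orbit is trapped and, $P_{\gamma_0}$ being the only equilibrium there, must converge to it, which gives the connection and the profile with behaviours \eqref{case2} and \eqref{queue}.

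The main obstacle is precisely this nonemptiness, that is, guaranteeing for every $\sigma>0$ and $p\in(1,m)$ that not all orbits from $P_0$ end at $Q_3$ or $P_1$. The delicate verification is that the flow indeed points inward on all of $\partial R$ and that the sweep $\{O_k\}_{k>0}$ produces an orbit entering $R$. An alternative I would test in parallel is to let $k\to0^+$, where $O_k$ shadows the planar orbit $P_0\to P_2$ in $\{Z=0\}$ and then follows the unstable manifold of $P_2$ carrying the behaviour \eqref{case1}, and to use the attractor property to capture these shadowing orbits; however, this second route seems to control only moderate ranges of $\sigma$ and may not by itself cover all $\sigma>0$, which is why I expect the trapping-region construction to be the decisive and most technical ingredient.
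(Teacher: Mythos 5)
Your setup is sound and you correctly isolate the crux: everything reduces to showing that at least one orbit leaving $P_0$ actually reaches $P_{\gamma_0}$, and you rightly observe that the open/open/closed trichotomy among the fates $P_{\gamma_0}$, $P_1$, $Q_3$ cannot produce this by bracketing. But that crux is exactly the step you do not carry out. The trapping region $R$ is only described as a plan ("the route I would take is to construct\dots"), with no verification that the flow points inward on its boundary, no identification of the slab and cap, and no argument that some $O_k$ enters $R$ --- and this would have to work uniformly for \emph{every} $\sigma>0$ and $p\in(1,m)$, which is precisely where the difficulty lies. A secondary gap: in the case where the profile stays positive on $(0,\infty)$ you conclude that the $\omega$-limit set must be $P_{\gamma_0}$ by "inspecting the critical points", but the system \eqref{PSsyst2} is three-dimensional, so Poincar\'e--Bendixson is unavailable and you must also exclude non-equilibrium invariant sets (periodic orbits, etc.) in your compact region; this is asserted, not proved.

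The paper's argument is entirely different and purely local, and it is worth knowing because it sidesteps both difficulties. One passes to the variable $\overline{Z}=XZ$, obtaining the system \eqref{PSsyst3}, under which $P_0$ and the whole line $P_{\gamma}$ collapse to the single non-hyperbolic critical point $(0,0,0)$. A center manifold computation reduces the flow there to a planar homogeneous quadratic system of the form $dZ_1/dX_1=f(Z_1/X_1)$, and Lyagina's classification \cite{Ly51} shows the origin has an \emph{elliptic sector} in the relevant quadrant. Orbits in the elliptic sector both leave and re-enter the merged point, i.e.\ they are exactly connections from $P_0$ to $P_{\gamma_0}$ in the original phase space, for every $p\in(1,m)$ and $\sigma>0$. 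If you want to salvage your global route you would need to actually build $R$ and verify the boundary inequalities; as written, the existence claim at the heart of the lemma remains unproved.
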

\begin{proof}
We once more replace $Z$ by the new variable $\overline{Z}=XZ$, as we previously did in Proposition \ref{prop.far}, to obtain the autonomous system \eqref{PSsyst3}. We analyze the critical point $(X,Y,\overline{Z})=(0,0,0)$ in \eqref{PSsyst3}. Let us first notice that the above change of variable gather in the origin of the system \eqref{PSsyst3} the orbits connecting to any points $P_{\gamma}$ with $\gamma>0$. But we know from Lemma \ref{lem.4} that there is only one point among them that attracts orbits in the phase space, that is the attractor $P_{\gamma_0}$ with $\gamma_0=1/(p-1)$. Thus, it is easy to check that the new point $(X,Y,\overline{Z})=(0,0,0)$ puts together the orbits of $P_0$ and $P_{\gamma_0}$. The linearization of the system \eqref{PSsyst3} near the origin has the matrix
$$
\left(
      \begin{array}{ccc}
        0 & 0 & 0 \\
        \alpha & -\beta & -1 \\
        0 & 0 & 0 \\
      \end{array}
    \right)
$$
having thus a one-dimensional stable manifold and a two-dimensional center manifold. We analyze the center manifold following, as usual, the recipe given in \cite[Section 2.12]{Pe}. To this end, we replace $Y$ by the new variable
$$
T:=\beta Y-\alpha X+\overline{Z},
$$
deriving the system
\begin{equation*}
\left\{\begin{array}{ll}\dot{X}=\frac{m}{\beta}X[(m-1)T+X-(m-1)\overline{Z}],\\
\dot{T}=-\beta T-\frac{m}{\beta}T^2-\frac{m(2\alpha+3\beta+1)}{\beta}TX+\frac{m(m+p)}{\beta}T\overline{Z}\\
-\frac{m\alpha(\alpha+\beta+1)}{\beta}X^2+\frac{m(3\beta+2\alpha+3)}{\beta}X\overline{Z}-\frac{m(m+p-1)}{\beta}\overline{Z}^2,\\
\dot{\overline{Z}}=\frac{m}{\beta}\overline{Z}[(m+p-2)T+2X-(m+p-2)\overline{Z}].\end{array}\right.
\end{equation*}
After rather long calculations, one can find that the center manifold is given by
$$
T(X,\overline{Z})=-\frac{m\alpha(\alpha+\beta+1)}{\beta^2}X^2+\frac{m(3\beta+2\alpha+3)}{\beta^2}X\overline{Z}-\frac{m(m+p-1)}{\beta^2}\overline{Z}^2
+O(|(X,\overline{Z})|^3),
$$
and the flow on the center manifold is given (discarding the terms containing $T$, which are of higher order) by the reduced system
\begin{equation}\label{interm50}
\left\{\begin{array}{ll}\dot{X}=\frac{m}{\beta}X[X-(m-1)\overline{Z}]+O(|(X,\overline{Z})|^3),\\
\dot{\overline{Z}}=\frac{m}{\beta}\overline{Z}[2X-(m+p-2)\overline{Z}]+O(|(X,\overline{Z})|^3).\end{array}\right.
\end{equation}
In order to study the orbits near the nonhyperbolic critical point $(X,\overline{Z})$ in the system \eqref{interm50}, we first do an affine change of variable by letting:
\begin{equation}\label{interm51}
X_1:=X-(p-1)\overline{Z}, \quad Z_1:=-(p-1)\overline{Z},
\end{equation}
which transforms \eqref{interm50} into the following topologically equivalent system (in which we only keep the quadratic terms and we omit the higher order ones for simplicity)
\begin{equation}\label{interm52}
\left\{\begin{array}{ll}\dot{X_1}=\frac{m}{\beta}X_1\left[\frac{m-1}{p-1}Z_1+X_1\right],\\
\dot{Z_1}=\frac{m}{\beta}Z_1\left[2X_1+\frac{m-p}{p-1}Z_1\right],\end{array}\right.
\end{equation}
which can be written in the following form
\begin{equation*}
\frac{dZ_1}{dX_1}=f\left(\frac{Z_1}{X_1}\right), \quad f(k)=\frac{a+bk+ck^2}{d+ek+fk^2},
\end{equation*}
with coefficients
$$
a=f=0, \ b=2, \ c=\frac{m-p}{p-1}, \ d=1, \ e=\frac{m-1}{p-1}.
$$
We are thus in the framework of the general classification given in the paper \cite{Ly51}, and more precisely, noticing that
$$
f'(0)=\frac{b}{d}=2>1, \ f'(1)=\frac{c+d}{e+d}=\frac{m-1}{m+p-2}\in(0,1), \ f'(\infty)=\frac{c}{e}=\frac{m-p}{m-1}\in(0,1),
$$
the phase portrait of the system \eqref{interm52} near the origin corresponds to Case 9 in \cite[Page 177]{Ly51}, that is, a nonhyperbolic point having an elliptic sector in the quadrant $\{X_1>0, Z_1<0\}$ and a hyperbolic sector in the quadrant $\{X_1>0, Z_1>0\}$. Undoing the affine transformation \eqref{interm51} to come back to the original variables, we have
$$
X=X_1-Z_1, \quad \overline{Z}=-\frac{1}{p-1}Z_1,
$$
thus the phase portrait of the system \eqref{interm50} in a neighborhood of the origin has an elliptic sector in a part of the quadrant $\{X>0, \overline{Z}>0\}$ and a hyperbolic sector in the remaining part of this quadrant, see Figure \ref{fig5}. In particular, this implies that there are orbits going out and entering the origin of the system \eqref{PSsyst3} along the center manifold of this critical point. Since we identified the orbits entering $(X,Y,\overline{Z})=(0,0,0)$ in \eqref{PSsyst3} tangent to the center manifold of this point to the orbits entering the critical point $P_{\gamma_0}$ tangent to its center manifold in the system \eqref{PSsyst2}, it follows that there is always at least an orbit connecting $P_0$ to $P_{\gamma_0}$ in the phase space associated to the system \eqref{PSsyst2}. The conclusion follows from Lemmas \ref{lem.1} and \ref{lem.4}.
\end{proof}
We plot in Figure \ref{fig5} the phase portrait of the system \eqref{interm50} near the origin, with the two sectors, one elliptic and one hyperbolic, separated by the line $\{X=\overline{Z}\}$.

\begin{figure}[ht!]
  \begin{center}
  \includegraphics[width=9cm,height=8cm]{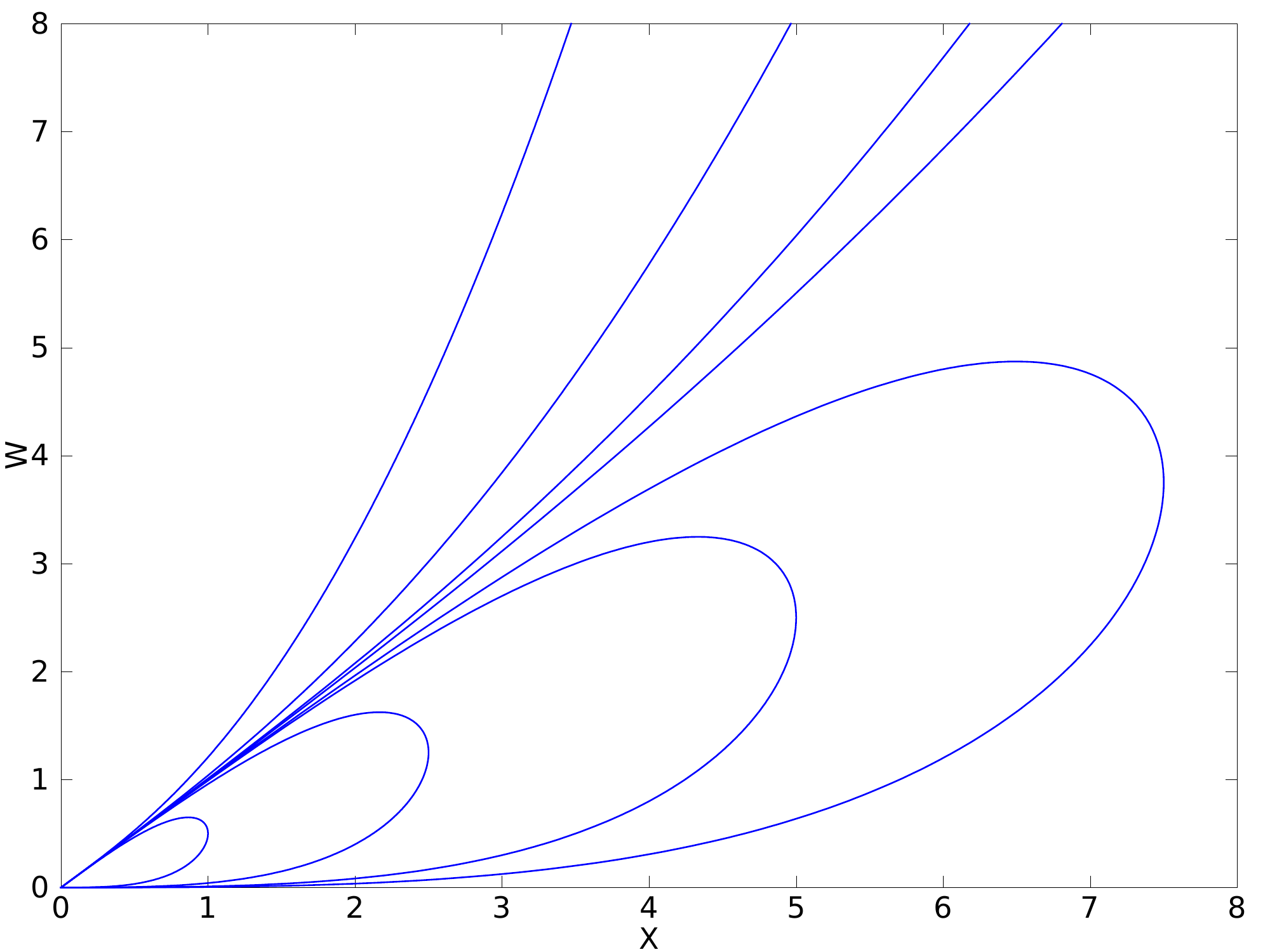}
  \end{center}
  \caption{Local behavior of the system \eqref{interm50} with one elliptic sector and one hyperbolic sector at the origin.}\label{fig5}
\end{figure}

One more interesting remark noticed first in the numerical experiments was the monotonicity of the components $X$ and $Y$ along any orbit going out of $P_2$ in the system (at least in the region where $Y>0$). This is in fact a very important feature and is established below.
\begin{lemma}\label{lem.flow2}
Consider the orbit in the phase space coming out of $P_2$ for any $\sigma>0$. Then the $X$ component is decreasing along the whole orbit and the $Y$ component is also decreasing in the region $\{Y\geq0\}$. In particular, $X<X(P_2):=(m-1)/2m(m+1)$ and $Y<Y(P_2):=1/m(m+1)$ at any point different from $P_2$.
\end{lemma}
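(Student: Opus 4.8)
The plan is to show that, along the orbit $(X,Y,Z)(\xi)$ emanating from $P_2$ in the system \eqref{PSsyst2}, the two nullcline surfaces $S_X:=\{(m-1)Y-2X=0\}$ (on which $\dot X=0$) and $S_Y:=\{\dot Y=0\}$ act as one-sided barriers that the orbit never crosses in the wrong direction. First I record the starting data: by Lemma \ref{lem.3} the orbit leaves $P_2$ tangent to the eigenvector of $\lambda_3$, along which both $X$ and $Y$ decrease while $Z$ increases from $0$; hence immediately past $P_2$ we have $\dot X<0$, $\dot Y<0$, $X>0$ and $Z>0$, and (since the planes $\{X=0\}$ and $\{Z=0\}$ are invariant) $X>0$, $Z>0$ persist for all later $\xi$. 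Note also that the assertion $\dot X<0$ is automatic in the region $\{Y\le 0\}$, since there $(m-1)Y-2X<0$ because $X>0$; thus only the region $\{Y\ge 0\}$ requires work, and there I will prove $\dot X<0$ and $\dot Y<0$ simultaneously.

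The crux is a sign computation on $S_Y$. Writing $G:=(m-1)Y-2X$, on $S_X$ one has $\dot X=0$ and therefore $\dot G=(m-1)\dot Y$; consequently, as long as $\dot Y<0$ the orbit cannot cross $S_X$ from $\{G<0\}$ into $\{G>0\}$. To control $S_Y$, I differentiate $\dot Y=-mY^2-\beta Y+X(\alpha-mY-Z)$ along the flow and evaluate at a point where $\dot Y=0$; a short calculation gives
$$\ddot Y\big|_{\dot Y=0}=\dot X\,(\alpha-mY-Z)-X\dot Z.$$
At such a point the relation $\dot Y=0$ forces $X(\alpha-mY-Z)=mY^2+\beta Y$, that is $\alpha-mY-Z=Y(mY+\beta)/X\ge 0$ whenever $Y\ge 0$ (as $\beta>0$). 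Since on the orbit $\dot Z=mZ[(p-1)Y+\sigma X]>0$ for $Y\ge 0$ (using $Z>0$, $X>0$, $p>1$, $\sigma>0$), and since $\dot X\le 0$ at any point we have not yet crossed $S_X$, both terms on the right are $\le 0$ with the second strictly negative; hence $\ddot Y<0$ at every point of $S_Y$ lying in $\{Y\ge 0\}$ with $\dot X\le 0$.

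These two facts close the argument by a single first–crossing (continuity) argument. Let $\xi_*$ be the first $\xi$ past $P_2$, while $Y\ge 0$, at which either $G=0$ or $\dot Y=0$; by the starting data $\xi_*$ is strictly past $P_2$, and on the preceding interval $G<0$ (so $\dot X<0$) and $\dot Y<0$. If $\dot Y(\xi_*)=0$, then $\dot X(\xi_*)\le 0$ (we have not crossed $S_X$), so the identity above gives $\ddot Y(\xi_*)<0$, contradicting that $\dot Y$ increased up to $0$; if instead $G(\xi_*)=0$ with $\dot Y(\xi_*)<0$, then $\dot G(\xi_*)=(m-1)\dot Y(\xi_*)<0$, contradicting that $G$ increased up to $0$. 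Hence no such $\xi_*$ exists, so $\dot X<0$ and $\dot Y<0$ throughout $\{Y\ge 0\}$. Combining with the trivial case $\{Y\le 0\}$ yields $\dot X<0$ on the whole orbit, and strict monotonicity then gives $X<X(P_2)=(m-1)/2m(m+1)$ everywhere and $Y<Y(P_2)=1/m(m+1)$ everywhere (in $\{Y<0\}$ the latter is trivial since $Y<0<Y(P_2)$).

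I expect the main obstacle to be handling the coupling cleanly: guaranteeing that $\dot X\le 0$ holds precisely at the first instant $S_Y$ might be touched, and treating the borderline possibility that $G=0$ and $\dot Y=0$ occur simultaneously (which the identity still resolves, since then $\dot X=0$ and $\ddot Y=-X\dot Z<0$). Care is also needed at the endpoint $P_2$ itself, where $\dot X=\dot Y=0$ and $Z=0$, so the argument must be initialized just past $P_2$ using the eigenvector information from Lemma \ref{lem.3} rather than at $P_2$.
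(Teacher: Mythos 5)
Your argument is correct and is essentially the paper's own proof in slightly different packaging: the key identity $\ddot Y\big|_{\dot Y=0}=\dot X(\alpha-mY-Z)-X\dot Z$, the observation that $\dot Y=0$ and $Y\geq 0$ force $\alpha-mY-Z\geq 0$, the strict positivity of $\dot Z$ in $\{Y\geq 0\}$, and the initialization from the eigenvector at $P_2$ all coincide with the paper's first-crossing contradiction (your $\dot G=(m-1)\dot Y$ on $S_X$ is exactly the paper's computation $\ddot X=m(m-1)X\dot Y$ at the first zero of $\dot X$). Your explicit treatment of the borderline case where both nullclines are touched simultaneously matches the paper's case $\eta_2=\eta_1$.
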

\begin{proof}
We readily get from the analysis in Lemma \ref{lem.3} that the unique orbit coming out of $P_2$ starts with components $X$ and $Y$ decreasing in a (small) neighborhood of $P_2$. Moreover, where $Y<0$, it is obvious that $\dot{X}<0$ from the first equation in \eqref{PSsyst2}. Assume by contradiction that $X$ is not decreasing in the region $\{Y>0\}$, thus there exists a first point $\eta=\eta_1$ on the connection starting from $P_2$ where $X$ changes its monotonicity, that is, $\dot{X}(\eta_1)=0$ and $X''(\eta_1)\geq0$. That is, $(m-1)Y(\eta_1)=2X(\eta_1)$, hence
$$
0\leq X''(\eta_1)=m(m-1)X(\eta_1)Y'(\eta_1)
$$
and we infer that $Y$ should have already changed monotonicity either before $\eta_1$ or at the same point. Let then $\eta_2\in(0,\eta_1]$ be the first point where the $Y$ component of the orbit coming out of $P_2$ changes monotonicity, that is, $\dot{Y}(\eta_2)=0$ and $Y''(\eta_2)\geq0$. In a first case, if $\eta_2=\eta_1$, taking into account that also $\dot{X}(\eta_1)=0$, we compute from the second equation in \eqref{PSsyst2} that
$$
Y''(\eta_2)=-X(\eta_2)\dot{Z}(\eta_2)=-mX(\eta_2)Z(\eta_2)[(p-1)Y(\eta_2)+\sigma X(\eta_2)]<0,
$$
and a contradiction, since in the region $\{Y>0\}$ component $Z$ is strictly increasing and thus $Z(\eta_2)>0$. In a second case, if $\eta_2<\eta_1$ (the point where the monotonicity of $Y$ changes for the first time lies on the orbit before the one where $X$ does the same), we thus have $\dot{X}(\eta_2)<0$. Since $\dot{Y}(\eta_2)=0$ we get that
\begin{equation}\label{interm36}
\alpha-mY(\eta_2)-Z(\eta_2)=\frac{mY^2(\eta_2)+\beta Y(\eta_2)}{X(\eta_2)}>0,
\end{equation}
thus we deduce from \eqref{interm36} that
\begin{equation*}
\begin{split}
Y''(\eta_2)&=\alpha\dot{X}(\eta_2)-m\dot{X}(\eta_2)Y(\eta_2)-\dot{X}(\eta_2)Z(\eta_2)-X(\eta_2)\dot{Z}(\eta_2)\\
&=\dot{X}(\eta_2)[\alpha-mY(\eta_2)-Z(\eta_2)]-mX(\eta_2)Z(\eta_2)[(p-1)Y(\eta_2)+\sigma X(\eta_2)]<0,
\end{split}
\end{equation*}
and a contradiction, ending the proof.
\end{proof}
Before stating the main proposition of this subsection, we need one more technical result concerning the reduced phase-plane inside the invariant plane $\{Z=0\}$.
\begin{lemma}\label{lem.Z0}
For any $p>1$, $\sigma>0$ there exists an orbit connecting $P_0$ to $P_2$, lying in the plane $\{Z=0\}$ of the phase space associated to the system \eqref{PSsyst2}.
\end{lemma}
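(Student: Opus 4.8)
The plan is to pass to the reduced planar system obtained by setting $Z=0$ in \eqref{PSsyst2}, namely the system \eqref{interm15}
\[
\dot{X}=mX[(m-1)Y-2X],\qquad \dot{Y}=-mY^2-\beta Y+\alpha X-mXY,
\]
and to produce the desired connection as a heteroclinic orbit inside the invariant plane $\{Z=0\}$ via a Poincar\'e--Bendixson argument. The only finite critical points here are $P_0=(0,0)$, $P_1=(0,-\beta/m)$ and $P_2$. From Lemma \ref{lem.1}, restricting to $\{Z=0\}$ the point $P_0$ has a one-dimensional stable direction along the $Y$-axis (eigenvalue $-\beta$) and a one-dimensional center manifold on which the flow goes \emph{out} into $\{X>0\}$, with local behavior $Y\sim(\alpha/\beta)X$ and $\dot X\sim (m/\beta)X^2>0$; since $\alpha/\beta=(\sigma+2)/(m-p)>0$, this unique outgoing orbit enters the open first quadrant $\{X>0,\,Y>0\}$. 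From Lemma \ref{lem.3}, within $\{Z=0\}$ the point $P_2$ is a stable node (both in-plane eigenvalues are negative), so it attracts orbits in the plane. The point $P_1$ has $Y<0$ and will play no role. The goal is then to show that the orbit leaving $P_0$ converges to $P_2$.

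Next I would trap this orbit in a bounded, positively invariant region of the open first quadrant. The axis $\{X=0\}$ is invariant, while on $\{Y=0,\,X>0\}$ one has $\dot Y=\alpha X>0$, so the flow points upward; hence the open first quadrant is forward invariant. On $\{Y=\alpha/m\}$ one computes $\dot Y=-(\alpha^2+\alpha\beta)/m<0$ (this is exactly the computation in the proof of Proposition \ref{lem.unifbound}(a) specialized to $Z=0$), so $\{Y<\alpha/m\}$ is invariant; consequently, once $X>(m-1)\alpha/(2m)$ we get $(m-1)Y-2X<(m-1)\alpha/m-2X<0$ and thus $\dot X<0$, which bounds $X$. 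Therefore the rectangle
\[
\mathcal{R}=\Bigl\{0<X<\tfrac{(m-1)\alpha}{2m},\ 0<Y<\tfrac{\alpha}{m}\Bigr\}
\]
is positively invariant, contains $P_2$ in its interior, and captures the orbit leaving $P_0$; on the boundary segments the vector field has no equilibria other than the corner $P_0$, so the only critical points in $\overline{\mathcal{R}}$ are $P_0$ and $P_2$.

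To conclude I would exclude closed orbits and polycycles by the Bendixson--Dulac criterion with the Dulac function $B(X,Y)=1/X$ on $\{X>0\}$. A direct computation gives
\[
\frac{\partial}{\partial X}\bigl(B\dot X\bigr)+\frac{\partial}{\partial Y}\bigl(B\dot Y\bigr)
=-2m-m-\frac{2mY+\beta}{X}=-3m-\frac{2mY+\beta}{X}<0
\]
throughout the (simply connected) first quadrant. Hence there are no periodic orbits nor graphics bounding a region in the quadrant. Applying Poincar\'e--Bendixson to the bounded forward orbit in $\overline{\mathcal{R}}$, its $\omega$-limit set must be a single equilibrium: it cannot be a cycle by Bendixson--Dulac, it cannot be a heteroclinic/homoclinic loop since $P_2$ is a sink in the plane (nothing leaves it) and $P_0$ can only be approached along the invariant $Y$-axis, and it cannot be $P_0$ itself because the center flow is outgoing ($\dot X>0$ for small $X>0$ pushes orbits away). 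Thus the $\omega$-limit is $P_2$, yielding the connection from $P_0$ to $P_2$ inside $\{Z=0\}$, for every $p>1$ and $\sigma>0$.

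The main obstacle is not the Poincar\'e--Bendixson step itself but the preliminary control of the orbit: verifying that the outgoing center-manifold orbit of the \emph{non-hyperbolic} point $P_0$ genuinely enters the open first quadrant (which relies on the local expansion from Lemma \ref{lem.1}) and that the quadrant together with the bounds $Y<\alpha/m$ and $X<(m-1)\alpha/(2m)$ form a trapping region with no spurious equilibria on its boundary. The choice $B=1/X$ is the device that makes the exclusion of cycles immediate; an alternative, in the style of the monotonicity analysis of Step~1 in the proof of Proposition \ref{prop.close}, would be to track the orbit directly against the nullclines $Y=\tfrac{2}{m-1}X$ and the curve \eqref{interm16}, but the Dulac argument is shorter and cleaner.
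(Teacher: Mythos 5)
Your proof is correct, and it follows the same overall strategy as the paper's (restrict to the invariant plane $\{Z=0\}$, trap the orbit emanating from $P_0$ along its center direction, and use that $P_2$ attracts in the plane), but the implementation of the global step is genuinely different. The paper takes as trapping region the lens bounded by the two nullclines, namely the line $(m-1)Y-2X=0$ and the curve \eqref{interm16}, both of which join $P_0$ to $P_2$; inside that region $\dot X$ and $\dot Y$ have constant sign, so the orbit is monotone in both coordinates and converges to a critical point, which must be $P_2$ --- no appeal to Poincar\'e--Bendixson or exclusion of cycles is needed. You instead use a rectangular trapping region $\{0<X<(m-1)\alpha/(2m),\,0<Y<\alpha/m\}$ (your boundary computations, including $\dot Y=-(\alpha^2+\alpha\beta)/m<0$ on $\{Y=\alpha/m\}$ and the verification that $P_2$ lies inside, which reduces to $\alpha>1/(m+1)$, are all correct for $1<p<m$), and then rule out periodic orbits and graphics via the Dulac function $1/X$, whose divergence computation $-3m-(2mY+\beta)/X<0$ checks out. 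Your route costs a little more machinery --- Poincar\'e--Bendixson plus the saddle--node argument that no orbit with $X>0$ can limit onto $P_0$ (which is right, since the center flow $\dot X\sim(m/\beta)X^2$ is repelling for $X>0$ and the stable manifold is the $Y$-axis, but deserves the explicit appeal to the center-manifold reduction you only sketch) --- whereas the paper's nullcline region makes convergence immediate; on the other hand, your Dulac computation is clean and does not require identifying the exact shape of the $\dot Y=0$ curve. Two minor points: $P_2$ could a priori be a stable focus rather than a node in the plane (the paper only shows $\lambda_1\lambda_2>0$ and $\lambda_1+\lambda_2<0$), which does not affect your argument; and the statement's hypothesis should be read as $1<p<m$, since $\beta>0$ and the positivity of $\alpha/\beta$ that you use both require $p<m$.
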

\begin{proof}
We restrict to the invariant plane $\{Z=0\}$, and the system \eqref{PSsyst2} reduces to
\begin{equation}\label{PPsystZ0}
\left\{\begin{array}{ll}\dot{X}=mX[(m-1)Y-2X],\\
\dot{Y}=-mY^2-\beta Y+\alpha X-mXY.\end{array}\right.
\end{equation}
We consider next two important curves in the phase plane associated to the system \eqref{PPsystZ0}. The first of them is the line of equation $(m-1)Y-2X=0$, passing through the critical points $P_0$ and $P_2$. The flow of the system \eqref{PPsystZ0} over this curve is given by the sign of the expression
$$
-mY^2-\beta Y+\alpha X-mXY=\frac{2m(m+1)}{(m-1)^2}X[X(P_2)-X],
$$
which is positive in the region $\{0\leq X<X(P_2)\}$. The second curve we consider is the curve where $dY/dX=0$, that is, of equation
$$
-mY^2-\beta Y+\alpha X-mXY=0,
$$
and the flow of the system \eqref{PPsystZ0} across this curve is given by the sign of the expression
$$
mX(\alpha-mY)[(m-1)Y-2X],
$$
which is negative in the region $Y<Y(P_2)$ since $\alpha>1/(m+1)=mY(P_2)$. Since both curves above connect $P_0$ to $P_2$, they bound an interior region in which an orbit of the phase plane associated to the system \eqref{PPsystZ0} may enter from outside, but never go out of it. Since the point $P_2$ is an attractor for the phase plane associated to \eqref{PPsystZ0}, it is an easy verification that the orbits going out of $P_0$ inside the region enclosed by the two curves will enter $P_2$. The same proof for the case $p=1$ is given in great detail as Lemma 5.4 in \cite{IS1}.
\end{proof}
Let us next introduce:
$$
Y_0:=\frac{(m-1)(\sigma+2)}{2m[\sigma(m-1)+2(p-1)]}=\frac{(m-1)(\sigma+2)}{2(m-p)}\frac{\beta}{m}, \quad p\in(1,m), \ \sigma>0.
$$
Notice first that $Y_0>\beta/m$ for any $p>1$, $\sigma>0$. We then have
\begin{lemma}\label{lem.flow}
If an orbit in the phase space associated to the system \eqref{PSsyst2} crosses the plane $\{Y=-Y_0\}$ for $p\in(1,m)$, $\sigma>0$ and at the crossing point the coordinate $X<X(P_2):=(m-1)/2m(m+1)$, then it cannot reenter the half-space $\{Y>-Y_0\}$ later on.
\end{lemma}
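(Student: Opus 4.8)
The plan is to show that the plane $\{Y=-Y_0\}$ acts as a one-sided barrier for the flow of \eqref{PSsyst2}: as long as $X<X(P_2)$, orbits may cross it only in the direction of decreasing $Y$. The natural starting point is to evaluate the second equation of \eqref{PSsyst2} along this plane. Setting $Y=-Y_0$ I would obtain
\[
\dot{Y}\big|_{Y=-Y_0}=-mY_0^2+\beta Y_0+(\alpha+mY_0)X-XZ=:g(X)-XZ,
\]
where $g(X)=(\alpha+mY_0)X+\beta Y_0-mY_0^2$ is affine in $X$ with strictly positive slope $\alpha+mY_0>0$, and the remaining term $-XZ\leq0$ for $X,Z\geq0$. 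Hence on $\{Y=-Y_0\}$ one always has $\dot{Y}\leq g(X)$, so it suffices to control the sign of $g$.

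The crux is the algebraic identity $g(X(P_2))=0$, which is precisely the reason $Y_0$ is defined as it is. Using \eqref{SSexp} one checks the two relations $\alpha=2mY_0/(m-1)$ and $mY_0=\beta+\tfrac12$ (both immediate from the definition of $Y_0$). These give $\alpha+mY_0=mY_0(m+1)/(m-1)$, and combined with $X(P_2)=(m-1)/[2m(m+1)]$ they yield $(\alpha+mY_0)X(P_2)=Y_0/2$, whence $g(X(P_2))=Y_0\bigl(-mY_0+\beta+\tfrac12\bigr)=0$. Since $g$ is strictly increasing, it follows that $g(X)<0$ for every $0\leq X<X(P_2)$, and therefore $\dot{Y}|_{Y=-Y_0}=g(X)-XZ<0$ at every point of the plane with $X<X(P_2)$ and $Z\geq0$. (In the degenerate case $X=0$ this reads $\dot{Y}=Y_0(\beta-mY_0)<0$, consistent with the already noted fact $Y_0>\beta/m$.)

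It then remains to propagate the smallness of $X$ forward in time. The key observation is that on $\{Y<0\}$, and in particular on $\{Y\leq-Y_0\}$, the first equation of \eqref{PSsyst2} gives $\dot{X}=mX[(m-1)Y-2X]\leq0$, so $X$ is non-increasing while the orbit remains in $\{Y<0\}$. The argument then runs by contradiction: suppose the orbit crosses $\{Y=-Y_0\}$ downward at some $\eta_0$ with $X(\eta_0)<X(P_2)$ but reenters $\{Y>-Y_0\}$, and let $\eta_1>\eta_0$ be the first return to the plane. On $(\eta_0,\eta_1)$ we have $Y<-Y_0<0$, hence $X$ is non-increasing there and $X(\eta_1)\leq X(\eta_0)<X(P_2)$. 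By the barrier computation this forces $\dot{Y}(\eta_1)<0$, contradicting the fact that an upward return requires $\dot{Y}(\eta_1)\geq0$. Thus no reentry is possible.

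I expect the main obstacle to be the verification of $g(X(P_2))=0$, which is where the specific threshold $Y_0$ is used and which requires careful bookkeeping of the relations between $\alpha$, $\beta$, $Y_0$ and $X(P_2)$ arising from \eqref{SSexp}. Everything else is a soft monotonicity/barrier argument; the only other point worth checking is that $\dot{X}\leq0$ holds uniformly on $\{Y<0\}$ (with equality only at $X=0$, a case already covered directly via $Y_0>\beta/m$ and the invariance of $\{X=0\}$).
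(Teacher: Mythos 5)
Your proof is correct and follows essentially the same strategy as the paper: you show that on the plane $\{Y=-Y_0\}$ with $X<X(P_2)$ and $Z\geq0$ the flow satisfies $\dot{Y}<0$, and you propagate $X<X(P_2)$ forward using $\dot{X}\leq0$ in $\{Y<0\}$. The only cosmetic difference is that the paper locates $-Y_0$ as the smallest root of the quadratic $\dot{Y}|_{X=X(P_2),Z=0}$ in $Y$ and compares roots, whereas you evaluate at $Y=-Y_0$ and exploit that the result is affine and increasing in $X$ with zero at $X(P_2)$ --- the same identity $(m-1)\alpha=2\beta+1$ underlies both computations.
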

\begin{proof}
The flow of the system \eqref{PSsyst2} on the plane $\{Y=-Y_0\}$ is given by the sign of $P(-Y_0)$, where
$$
P(Y):=-mY^2-(\beta+mX)Y+\alpha X-XZ,
$$
whose smallest real root (if any, otherwise $P(y)<0$ for any $y\in\real$ and the conclusion becomes obvious) is
$$
\overline{y}:=-\frac{(\beta+mX)+\sqrt{(\beta+mX)^2+4m(\alpha X-Z)}}{2m}.
$$
We readily get that
$$
\overline{y}>-\frac{(\beta+mX(P_2))+\sqrt{(\beta+mX(P_2))^2+4m\alpha X(P_2)}}{2m}=-Y_0,
$$
whence $P(-Y_0)<0$. Since in the half-space $\{Y<0\}$ we have $\dot{X}<0$, the inequality $X<X(P_2)$ is also preserved along the orbit after the crossing point, thus the orbit cannot come back and cross again the plane $\{Y=-Y_0\}$.
\end{proof}
\noindent \textbf{Remark.} It readily follows from Lemma \ref{lem.flow} and the inequality \eqref{interm28} that if $f$ is a profile with interface, then its orbit in the phase space does not cross the plane $\{Y=-Y_0\}$ and there exists a constant $C>0$ depending only on $m$ and $p$, but \emph{independent of $\sigma$ and $f$}, such that
\begin{equation}\label{unif.deriv}
|(f^{m-1})'(\xi)|\leq C\xi, \qquad {\rm for \ any} \ \xi>0.
\end{equation}
Moreover,
\begin{equation}\label{unif.deriv2}
(f^m)'(\xi)=\frac{m}{m-1}(f^{m-1})'(\xi)f(\xi)\leq K(m,p)\xi^{(m+1)/(m-1)},
\end{equation}
which is an immediate consequence of \eqref{upper} and \eqref{unif.deriv}, the constant $K(m,p)$ depending only on $m$ and $p$.

We are now in a position to state the main result concerning the good profiles contained in the orbit starting from the critical points $P_2$ and $P_0$ for sufficiently large $\sigma>0$.
\begin{proposition}\label{prop.P2large}
Given $p\in(1,m)$ fixed, there is $\sigma_1>0$ (depending on $p$) such that, for any $\sigma\in(\sigma_1,\infty)$, the orbit going out of $P_2$ in the phase space associated to the system \eqref{PSsyst2} enters the critical point at infinity denoted by $Q_3$ on the Poincar\'e hypersphere. Moreover, for any $\sigma\in(\sigma_1,\infty)$ there are orbits connecting $P_0$ and $Q_3$ in the same phase space.
\end{proposition}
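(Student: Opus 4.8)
The plan is to establish the two assertions in turn. For the first one I will use the plane $\{Y=-Y_0\}$ as a one-way gate (Lemma~\ref{lem.flow}) and a singular-limit contradiction argument as $\sigma\to\infty$; for the second I will chain the resulting connection with the orbit $P_0\to P_2$ supplied by Lemma~\ref{lem.Z0}. \textbf{Step 1 (reduction to a crossing).} By Lemma~\ref{lem.flow2} the unique orbit leaving $P_2$ has $X$ strictly decreasing, so $X<X(P_2)$ everywhere except at $P_2$; hence by Lemma~\ref{lem.flow} it is enough to show that $Y$ reaches a value below $-Y_0$, because then the orbit can never re-enter $\{Y>-Y_0\}$. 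Suppose, for contradiction, that there is a sequence $\sigma_n\to\infty$ along which the orbit stays in $\{Y\ge-Y_0\}$. Then it never crosses $\{Y=-Y_0\}$, so the $\sigma$-uniform bounds of the Remark after Lemma~\ref{lem.flow} apply: \eqref{upper}, the two-sided estimate \eqref{unif.deriv}, and \eqref{unif.deriv2}. These give equicontinuity of $f_n$ and of $(f_n^m)'$ on compact subsets of $[0,\infty)$, so along a subsequence $f_n\to f_\infty$ locally uniformly with $f_\infty(0)=0$; since these orbits leave $P_2$, the local behavior \eqref{Beh01} together with $\psi(\sigma)\to0$ forces $f_\infty(\xi)\sim c_1\xi^{2/(m-1)}$ near $\xi=0$, where $c_1=[(m-1)/2m(m+1)]^{1/(m-1)}$.

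\textbf{Step 2 (the singular limit).} On each interval $[0,R]$ with $R<1$ one has $\xi^{\sigma_n}\to0$ uniformly, while $\alpha_n\to 1/(m-1)$ and $\beta_n\to0$. Passing to the limit in \eqref{SSODE}, $f_\infty$ solves $(f_\infty^m)''=\tfrac{1}{m-1}f_\infty$ on $(0,1)$, and the unique solution with $f_\infty(0)=0$ and the prescribed behavior at the origin is $f_\infty(\xi)=c_1\xi^{2/(m-1)}$; in particular $f_\infty$ is continuous with $f_\infty(1)=c_1>0$, so $f_\infty\ge c_1/2$ on some $[1,1+\delta']$. On the other hand, for fixed $\xi>1$ we have $\xi^{\sigma_n}\to\infty$, and since $f_n\ge c_1/2$ there, \eqref{SSODE} forces $(f_n^m)''\to-\infty$ uniformly on $[1+\delta,1+\delta']$ (the terms $\alpha_nf_n$ and $\beta_n\xi f_n'$ staying bounded, using \eqref{upper} and \eqref{unif.deriv}). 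Integrating across this interval contradicts the uniform two-sided bounds on $(f_n^m)'$ coming from \eqref{unif.deriv} and \eqref{unif.deriv2}. This contradiction shows that for all $\sigma$ large the orbit leaving $P_2$ does cross $\{Y=-Y_0\}$, hence remains in $\{Y<-Y_0\}$ thereafter.

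\textbf{Step 3 (convergence to $Q_3$).} Once trapped in $\{Y<-Y_0\}$ the orbit cannot approach any finite critical point, since $P_0,P_1,P_2$ and the $P_\gamma$ all satisfy $Y\ge-\beta/m>-Y_0$; it cannot approach $Q_1,Q_2,Q_5$ (orbits leave these) nor $Q_4$ (excluded by Lemma~\ref{lem.Q4}). Because $X$ is strictly decreasing it tends to a limit, which must be $0$ (any positive limit would force $\dot X=0$, i.e. $(m-1)Y=2X>0$, impossible for $Y<0$); on $\{X=0\}$ with $Y<-\beta/m$ one has $\dot Y=-Y(mY+\beta)<0$, so $Y\to-\infty$. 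Thus the orbit enters the stable node $Q_3$ (Lemma~\ref{lem.inf2}), which proves the first assertion and fixes the threshold $\sigma_1$.

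\textbf{Step 4 ($P_0$–$Q_3$ connections) and main obstacle.} For the second assertion I chain the connection just obtained with the orbit $P_0\to P_2$ lying in the invariant plane $\{Z=0\}$ (Lemma~\ref{lem.Z0}). The point $P_2$ is a hyperbolic saddle with a two-dimensional stable manifold inside $\{Z=0\}$ and a one-dimensional unstable manifold whose orbit enters $Q_3$ for $\sigma>\sigma_1$. Hence the orbits leaving $P_0$ along its centre manifold with small $Z>0$ (those carrying the behavior \eqref{case2}, cf.\ Lemma~\ref{lem.tail}) shadow $P_0\to P_2$, are ejected near $P_2$ along its unstable manifold, and, since $Q_3$ is a stable node with an open basin of attraction, are captured by $Q_3$; rigorously this is the Inclination ($\lambda$-)Lemma at the saddle $P_2$ combined with the robustness of $Q_3$, and yields an open set of genuine $P_0\to Q_3$ orbits. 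The hard part is Step~2: controlling the profiles \emph{uniformly in $\sigma$} precisely across the thin layer around $\xi=1$ where the weight $\xi^\sigma$ switches from negligible to dominant, and extracting the contradiction from the singular limit; Steps~1 and~3 are direct flow computations and Step~4 is a standard invariant-manifold chaining argument.
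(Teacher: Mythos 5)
Your Steps 1, 3 and 4 are sound and, in fact, Steps 3--4 coincide with what the paper does: once the orbit from $P_2$ has crossed $\{Y=-Y_0\}$ with $X<X(P_2)$, Lemma \ref{lem.flow} traps it, the $\omega$-limit argument sends it to $Q_3$, and the $P_0$--$Q_3$ connections are obtained exactly as in your Step 4 by chaining the $\{Z=0\}$ orbit of Lemma \ref{lem.Z0} with the saddle structure at $P_2$ and the stability of $Q_3$. The real divergence is in how the crossing of $\{Y=-Y_0\}$ is forced: the paper does this by constructing two explicit planar barriers through $P_2$ (the planes \eqref{plane1} and \eqref{plane2}) and checking by direct computation that, for $\sigma$ large, the flow across them confines the orbit to a region disjoint from $P_1$ and $P_{\gamma_0}$ until $Y$ drops below $-Y_0$. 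You replace this by a compactness argument in the singular limit $\sigma\to\infty$, and that is where the gap lies.

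The gap is in Step 2: you assert that the locally uniform limit $f_\infty$ equals $c_1\xi^{2/(m-1)}$ on $[0,1]$, but nothing you invoke rules out $f_\infty\equiv 0$ (or, if the orbits under the contradiction hypothesis happen to enter $P_1$, a limit supported in some $[0,\xi^*]$ with $\xi^*\le 1$). The estimates \eqref{upper}, \eqref{unif.deriv}, \eqref{unif.deriv2} are all upper bounds on $f_n$ or bounds on derivatives; none of them gives a lower bound $f_n(\xi^*)\ge\delta>0$ at a fixed $\xi^*\in(0,1)$ uniformly in $n$, and without such a bound the contradiction in the layer $[1+\delta,1+\delta']$ cannot be launched (if $f_\infty\equiv0$ near $\xi=1$, the term $\xi^{\sigma_n}f_n^p$ need not blow up there). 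The two-term expansion \eqref{Beh01} with $\psi(\sigma)\to0$ cannot supply this lower bound either, because it is an asymptotic statement as $\xi\to0$ valid only on a neighborhood of the origin that depends on $\sigma_n$ and may shrink to $\{0\}$; turning it into a uniform-in-$\sigma$ statement on a fixed interval is precisely the quantitative control that the paper's barrier construction provides and that your argument omits. (The uniqueness claim for the degenerate limit ODE $(f^m)''=\tfrac1{m-1}f$ with the prescribed behavior at $\xi=0$ is a secondary issue: once one knows $f_\infty\not\equiv0$ near the origin, convexity of $f_\infty^m$ already yields $f_\infty(1)>0$, so what is really needed is only the nondegeneracy of the limit --- but that is exactly what is missing.)
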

\begin{proof}
The proof is technically involved and consists in constructing suitable geometric barriers in form of planes in the phase space associated to the system \eqref{PSsyst2}, limiting the way for the orbit coming out of $P_2$ (at least for suitably large $\sigma$). Some of the calculations required in the proof are very technical and were performed with the help of a computer program. We divide the proof into several steps.

\medskip

\noindent \textbf{Step 1.} Consider the following two planes of equations
\begin{equation}\label{plane1}
Z=E-DY, \quad D=\frac{2m(m+1)^2}{m-1}, \ E=\frac{2(m+1)}{m-1},
\end{equation}
and respectively
\begin{equation}\label{plane2}
X=BY+C, \quad B=\frac{m(m-1)}{2m^2+5m+1}, \ C=\frac{(2m+1)(m-1)}{2m(2m^2+5m+1)}
\end{equation}
Notice that both planes contain the point $P_2$. After some calculations, the flow of the system \eqref{PSsyst2} on the plane given by \eqref{plane1} is given by the sign of the following complicated expression:
\begin{equation*}
\begin{split}
F(X,Y&;m,p,\sigma):=-mpY^2-\frac{m^2+m\sigma-mp-mp\sigma+m+p\sigma-2p^2+3p-\sigma-2}{(m+1)(\sigma(m-1)+2(p-1))}Y\\
&+\frac{m(2m^2-m\sigma+3m+\sigma+3)}{m-1}XY-\frac{L(m,p,\sigma)}{(m+1)(m-1)(\sigma(m-1)+2(p-1))}X,
\end{split}
\end{equation*}
where
$$
L(m,p,\sigma):=-(m-1)^2\sigma^2+\sigma(m-1)(2m^2+3m+3-2p)+4p(m+1)^2-2(m+1)(3m+1).
$$
We show first that in the region of the phase space where $-Y_0<Y<Y(P_2):=1/m(m+1)$, the term multiplying $X$ in the expression of $F(X,Y;m,p,\sigma)$ is positive, that is
\begin{equation}\label{interm37}
K(Y):=\frac{m(2m^2-m\sigma+3m+\sigma+3)}{m-1}Y-\frac{L(m,p,\sigma)}{(m+1)(m-1)(\sigma(m-1)+2(p-1))}>0,
\end{equation}
for $\sigma>0$ sufficiently large. Indeed, we notice that the coefficient of $Y$ in \eqref{interm37} is negative for $\sigma>(2m^2+3m+3)/(m-1)$ and the same happens for the expression $L(m,p,\sigma)$ for $\sigma$ large enough. Thus it is obvious that $K(Y)>0$ for $Y\leq0$. Moreover, replacing $Y=Y(P_2)=1/m(m+1)$ in \eqref{interm37}, we get
$$
K(Y(P_2))=\frac{2(m-p+\sigma+2)}{(m+1)(\sigma(m-1)+2(p-1))}>0.
$$
Since $K(Y)$ is a linear expression, it follows that $K(Y)>0$ for $Y\in(-Y_0,Y(P_2))$. Let us restrict now to the region of the plane \eqref{plane1} limited by the intersection with the plane \eqref{plane2} in which we have $X>BY+C$, where $B$, $C$ are given in \eqref{plane2}. According to the positivity of $K(Y)$, we infer that for $-Y_0<Y<Y(P_2)$ and $\sigma>0$ sufficiently large holds
\begin{equation*}
\begin{split}
F(X,Y;m,p,\sigma)&>F(BY+C,Y;m,p,\sigma)\\&=\frac{(m(m+1)Y-1)(K_1Y+K_2)}{2m(m+1)(2m^2+5m+1)(\sigma(m-1)+2(p-1))},
\end{split}
\end{equation*}
where
\begin{equation*}
\begin{split}
K_1&=2m(\sigma(m-1)+2(p-1))\left[-m(m-1)\sigma+2m^3+3m^2+3m-2m^2p-5mp-p\right],\\
K_2&=(2m+1)\left[-(m-1)^2\sigma+(m-1)(2m^2+3m-2p+3)\sigma\right.\\&\left.+2(m+1)(2mp-3m+2p-1)\right]
\end{split}
\end{equation*}
One can next prove (we omit here the rather tedious but straightforward calculations, for example by estimating at $Y=-Y_0$, $Y=0$ and noticing that $K_1<0$ for $\sigma>0$ large) that for $\sigma>0$ large enough, the quantity $K_1Y+K_2$ is negative for $-Y_0<Y<Y(P_2)$. Moreover, since $Y<Y(P_2)$, it follows that $m(m+1)Y<1$, hence $F(BY+C,Y;m,p,\sigma)>0$ in the region we are concerned with, that is $-Y_0<Y<Y(P_2)$. Thus, the flow in this region of the plane \eqref{plane1} is in the positive direction of the normal to the plane \eqref{plane1}.

\medskip

\noindent \textbf{Step 2.} We consider now the plane given by the equation \eqref{plane2} and we restrict ourselves to the region where $Z>E-DY$ with $D$, $E$ as in \eqref{plane1}. The flow of the system \eqref{PSsyst2} on the plane \eqref{plane2} is given by the sign of the following expression:
\begin{equation*}
\begin{split}
H(Y,Z;&m,p,\sigma)=\frac{2m^3(m-1)(m+1)^2}{(2m^2+5m+1)^2}Y^2+\frac{(m-1)(m+1)J(m,p,\sigma)}{2(\sigma(m-1)+2(p-1))(2m^2+5m+1)^2}Y\\
&+\left[\frac{m^2(m-1)^2}{(2m^2+5m+1)^2}Y+\frac{(2m+1)(m-1)^2}{2(2m^2+5m+1)^2}\right]Z\\
&-\frac{(2m^2\sigma+4mp-2m+2p-\sigma-2)(2m+1)(m-1)^2}{2m(\sigma(m-1)+2(p-1))(2m^2+5m+1)^2},
\end{split}
\end{equation*}
where
$$
J(m,p,\sigma):=\sigma(4m^4-6m^3+m+1)+(8m^3-8m^2-6m-2)p-4m^3+6m^2+4m+2.
$$
Since the coefficient of $Z$ in the expression of $H(Y,Z;m,p,\sigma)$ is $BY+C=X>0$, we can replace $Z$ in the above mentioned region by $E-DY$ and thus obtain
\begin{equation*}
\begin{split}
H(Y,Z;m,p,\sigma)&>H(Y,E-DY;m,p,\sigma)\\&=-\frac{(m-1)(2m+1)S(m,p,\sigma)}{2m(\sigma(m-1)+2(p-1))(2m^2+5m+1)^2}(m^2Y+mY-1),
\end{split}
\end{equation*}
with
$$
S(m,p,\sigma):=(2m+1)(m-1)\sigma-2m^2+6mp-4m+2p-2>0,
$$
provided $\sigma>0$ is large enough. Noticing that $m^2Y+mY-1=m(m+1)(Y-Y(P_2))$ which is negative on any connection coming out of $P_2$ according to Lemma \ref{lem.flow2}, we readily infer that the flow on the considered region of the plane \eqref{plane2} has positive sign for $Y\in(-Y_0,Y(P_2))$.

\medskip

\noindent \textbf{Step 3. Connections from $P_2$ to $Q_3$.} Let us now take $\sigma>0$ sufficiently large in order to fulfill all the conditions for the positivity of the flows in the previous Steps 1 and 2. Recall that a connection coming out of the critical point $P_2$ starts tangent to the eigenvector $e_3=(x(\sigma),-1,z(\sigma))$, where $x(\sigma)$, $z(\sigma)$ are defined in the proof of Lemma \ref{lem.3}. Noticing that the scalar product between the normal to the plane given by \eqref{plane1} and $e_3$ is
$$
(0,D,1)\cdot e_3=z(\sigma)-D>0,
$$
for $\sigma>0$ large enough, we infer that the orbit coming out of $P_2$ starts in the region where $Z>E-DY$ (and $Y<Y(P_2)$ by Lemma \ref{lem.flow2}). Moreover, the same can be said about the plane given by \eqref{plane2}, since
$$
(1,-B,0)\cdot e_3=x(\sigma)+B=\frac{m(m-1)}{2m^2+5m+1}-\frac{(m-1)^2}{2(m+p-2)+\sigma(m-1)}>0,
$$
provided $\sigma>0$ is very large. Thus, the orbit coming out of $P_2$ also begins in the region where $X>BY+C$ for $\sigma>0$ large enough. But then, the outcome of Steps 1 and 2 above proves that the orbit coming out of $P_2$ has to remain in the region where simultaneously
$$
X>BY+C, \ Z>E-DY, \ {\rm for} \ -Y_0<Y<Y(P_2).
$$
Indeed, due to the positive signs of the flows given by $F(X,Y;m,p,\sigma)$ and $H(Y,Z;m,p,\sigma)$ in the corresponding region, the orbit cannot intersect any of the two planes given by the equations \eqref{plane1} and \eqref{plane2}. This means that, in particular, taking $\sigma>\sigma_1>0$ with $\sigma_1$ large enough so that all the positivity conditions in Steps 1, 2 and 3 are satisfied, the orbit coming out of $P_2$ will remain in the geometric region $\{X>BY+C\}$ while $Y>-Y_0$. It is then obvious that the critical point $P_{\gamma_0}$ does not lie in the region $\{X>BY+C\}$. Moreover, the same is true for the interface critical point $P_1=(0,-\beta/m,0)$ given $\sigma$ large enough, since
$$
-\frac{\beta}{m}B+C=\frac{m-1}{2m^2+5m+1}\left[\frac{2m+1}{2m}-\frac{m-p}{\sigma(m-1)+2(p-1)}\right]>0.
$$
It thus follows that, provided $\sigma>0$ sufficiently large, the orbit coming out of $P_2$ cannot enter any of the critical points $P_{\gamma_0}$ and $P_1$ before, in a first step, crossing the plane $\{Y=-Y_0\}$. But then Lemma \ref{lem.flow} implies that the orbit cannot go back to enter again the half-space $\{Y>-Y_0\}$, in which both critical points $P_{\gamma_0}$ and $P_1$ lie. This argument discards that, for $\sigma>0$ large enough so that all the previous conditions of positivity hold true, the orbit coming out of $P_2$ may enter the points $P_{\gamma_0}$ and $P_1$. From the monotonicity of the components $X$ (see Lemma \ref{lem.flow2}) and $Y$ in the region $\{Y<-Y_0\}$ (see the proof of Lemma \ref{lem.flow}), and from the invariance of the $\omega$-limit set \cite[Theorem 2, Section 3.2]{Pe}, we infer that the orbit coming out of $P_2$ for $\sigma>0$ sufficiently large must enter a critical point in the closure of the region $\{Y<-Y_0\}$, and the only such critical point is $Q_3$.

\medskip

\noindent \textbf{Step 4. Connections from $P_0$ to $Q_3$.} Let $p\in(1,m)$ and $\sigma>\sigma_1$ be fixed, where $\sigma_1>0$ is large enough such that all the positivity conditions in Steps 1, 2, 3 are simultaneously satisfied and thus the orbit coming out of $P_2$ in the phase space connects to the stable node $Q_3$ at infinity. Since $Q_3$ is a stable node and $P_2$ a saddle point, there exists $\delta>0$ sufficiently small such that for any (non-critical) point in a small half-ball near $P_2$, namely $(X,Y,Z)\in B(P_2,\delta)\cap\{Z>0\}$, the unique orbit passing through this point in the phase-space enters $Q_3$. It also follows from Lemma \ref{lem.Z0} that there is an orbit connecting $P_0$ to $P_2$ inside the plane $\{Z=0\}$. Again by continuity, there exists an orbit going out of $P_0$ and entering the ball $B(P_2,\delta)$ (without connecting to $P_2$). We thus infer that this orbit in the phase space coming out of $P_0$ enters $Q_3$, as desired.
\end{proof}

\subsection{Proof of Theorems \ref{th.large}\,(b) and \ref{th.decay}}\label{subsec.55}

We begin with the following preparatory result:
\begin{lemma}\label{lem.P0P1}
(a) Let $m>1$, $p\in(1,m)$ be such that there exist $\sigma_1$, $\sigma_2>0$ such that for $\sigma=\sigma_1$, the orbit coming out of $P_2$ in the phase space associated to the system \eqref{PSsyst2} enters $P_{\gamma_0}$ and for $\sigma=\sigma_2$ the orbit coming out of $P_2$ enters $Q_3$. Then there exists $\sigma_0>0$ such that for $\sigma=\sigma_0$ there is a good profile with interface with behavior \eqref{case1} near $\xi=0$.

(b) Let $m>1$, $p\in(1,m)$ and $\sigma>0$ be such that in the phase space associated to the system \eqref{PSsyst2} there are orbits connecting $P_0$ to $P_{\gamma_0}$ and orbits connecting $P_0$ to $Q_3$. Then there exists at least one good profile with interface with behavior \eqref{case2} near $\xi=0$.
\end{lemma}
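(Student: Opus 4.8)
The plan is to prove both parts by the same \emph{connectedness (continuity) argument}, differing only in the shooting parameter: in part (a) we vary $\sigma$, while in part (b) we fix $\sigma$ and shoot over the one-parameter family of orbits leaving $P_0$ along its two-dimensional center manifold (indexed by the constant $k>0$ in Lemma \ref{lem.1}). In both cases the relevant orbit has exactly three admissible asymptotic fates: entering the attractor $P_{\gamma_0}$ (a decaying-tail profile, by Lemma \ref{lem.4}), entering the stable node $Q_3$ at infinity (a changing-sign profile, by Lemma \ref{lem.inf2}), or entering the interface point $P_1$ (a good profile with interface, by Lemma \ref{lem.2}). The first two are \emph{open} conditions, since $P_{\gamma_0}$ and $Q_3$ are respectively an attractor and a stable node, so their basins are open and the fate is preserved under small perturbations of the parameter (continuity with respect to data and parameters, as in \cite[Theorem 3, Chapter 15]{HS}); the interface fate is the non-open boundary case we want to capture.

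For part (a), set $\Sigma_{\gamma_0}=\{\sigma>0:\ \text{the orbit from }P_2\text{ enters }P_{\gamma_0}\}$ and $\Sigma_{Q_3}=\{\sigma>0:\ \text{the orbit from }P_2\text{ enters }Q_3\}$. By hypothesis $\sigma_1\in\Sigma_{\gamma_0}$ and $\sigma_2\in\Sigma_{Q_3}$, so both sets are nonempty, open, and disjoint; in particular $\sigma_1\neq\sigma_2$, and we may assume $\sigma_1<\sigma_2$. Letting $\sigma_0:=\sup\{\sigma\in[\sigma_1,\sigma_2]:\sigma\in\Sigma_{\gamma_0}\}$, the usual supremum argument forces $\sigma_0\notin\Sigma_{\gamma_0}\cup\Sigma_{Q_3}$: it cannot lie in the open set $\Sigma_{\gamma_0}$ (this would contradict it being a supremum), nor in the open set $\Sigma_{Q_3}$ (a left-neighborhood would then lie in $\Sigma_{Q_3}$, while by construction every left-neighborhood of $\sigma_0$ meets $\Sigma_{\gamma_0}$, which is disjoint from $\Sigma_{Q_3}$). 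It remains to classify the fate of the $P_2$-orbit at $\sigma=\sigma_0$. Using compactness on the Poincar\'e hypersphere, the invariance of the $\omega$-limit set \cite[Theorem 2, Section 3.2]{Pe}, and the monotonicity of the $X$ component together with the one-way trapping across the plane $\{Y=-Y_0\}$ (Lemmas \ref{lem.flow2} and \ref{lem.flow}), the $\omega$-limit reduces to a single critical point. We then discard $Q_1,Q_2$ (unstable nodes, which no orbit may enter), $Q_4$ (by Lemma \ref{lem.Q4}), $Q_5$ (ruled out by the sign of $X$ and the location of its one-dimensional stable manifold), and self-return to $P_0$ or $P_2$; since $\sigma_0\notin\Sigma_{\gamma_0}\cup\Sigma_{Q_3}$ excludes $P_{\gamma_0}$ and $Q_3$, the only surviving possibility is $P_1$. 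As the orbit leaves $P_2$, the profile it contains satisfies \eqref{case1} near $\xi=0$ by Lemma \ref{lem.3}, and entering $P_1$ supplies the interface via Lemma \ref{lem.2}.

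For part (b), $\sigma$ is fixed and the shooting parameter is $k\in(0,\infty)$ indexing the orbits leaving $P_0$ on its center manifold, which contain precisely the profiles with behavior \eqref{case2} near $\xi=0$ (Lemma \ref{lem.1}). Writing $K_{\gamma_0}$ and $K_{Q_3}$ for the sets of $k$ whose orbit enters $P_{\gamma_0}$, respectively $Q_3$, the hypothesis gives $K_{\gamma_0},K_{Q_3}\neq\emptyset$, and both are open and disjoint by the same continuity argument. Since $(0,\infty)$ is connected, $K_{\gamma_0}\cup K_{Q_3}$ cannot exhaust it, so there exists $k_0$ lying in neither; the analogous $\omega$-limit classification, now applied to the orbit leaving $P_0$ corresponding to $k_0$, shows that this orbit must enter $P_1$. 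It thus contains a good profile with interface having behavior \eqref{case2} near $\xi=0$, as claimed.

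The main obstacle is precisely the \textbf{exhaustive classification of the boundary orbit}: once we know it enters neither $P_{\gamma_0}$ nor $Q_3$, we must exclude every other $\omega$-limit candidate (the remaining finite and infinite critical points, self-returns, and the non-hyperbolic point $Q_5$) \emph{and} rule out the orbit failing to converge to a critical point at all. This is where the full force of the earlier structural results enters: the exclusion of $Q_4$ through Lemma \ref{lem.Q4}, the monotonicity of $X$ and the one-way trapping across $\{Y=-Y_0\}$ from Lemmas \ref{lem.flow2} and \ref{lem.flow} to confine the orbit and forbid recurrent (e.g. periodic) behavior, and the invariance of the $\omega$-limit set to force convergence to $P_1$. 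A secondary point requiring care is adapting these confinement estimates from the $P_2$-orbit to the $P_0$-orbit in part (b), where the monotonicity of $X$ must be re-examined since $X$ now grows from zero rather than decreasing from $X(P_2)$.
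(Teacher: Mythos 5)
Your proposal is correct and follows essentially the same route as the paper: both proofs rest on the openness of the basins of the two attractors $P_{\gamma_0}$ and $Q_3$ (in the shooting parameter $\sigma$ for part (a), resp. $k$ for part (b)), the connectedness of $(0,\infty)$, and the exhaustive classification from Section \ref{sec.local} showing that the only remaining admissible fate of the boundary orbit is the interface point $P_1$. Your supremum argument is just a rephrasing of the paper's observation that the complement of two disjoint open nonempty sets in $(0,\infty)$ is closed and nonempty, and you are in fact somewhat more explicit than the paper about the $\omega$-limit classification step, which the paper dispatches by citing the local analysis of Section \ref{sec.local}.
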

\begin{proof}
(a) We define the following three sets:
\begin{equation*}
\begin{split}
&A:=\{\sigma>0: {\rm the \ orbit \ from \ }P_2 \ {\rm enters} \ P_{\gamma_0}\},\\
&B:=\{\sigma>0: {\rm the \ orbit \ from \ }P_2 \ {\rm enters} \ P_1\},\\
&C:=\{\sigma>0: {\rm the \ orbit \ from \ }P_2 \ {\rm enters} \ Q_3\}.
\end{split}
\end{equation*}
Since $P_{\gamma_0}$ and $Q_3$ are both attractors, it is easy to see that the sets $A$ and $C$ are open. Moreover, both sets are non-empty, since $\sigma_1\in A$ and $\sigma_2\in C$. It follows from the analysis in Section \ref{sec.local} that $A\cup B\cup C=(0,\infty)$ and the three sets are obviously disjoint. It then follows that $B$ is nonempty and closed, hence any element $\sigma_0\in B$ gives the conclusion.
(b) Inspecting the proof of Lemma \ref{lem.1}, we readily observe that the orbits going out of $P_0$ tangent to the center manifold form a one-parameter family depeding on the constant $k>0$ such that $Z\sim kX$ near $\xi=0$. We also remark from the local analysis done in Section \ref{sec.local} that the orbits coming out of $P_0$ can only enter the attractors $P_{\gamma_0}$ and $Q_3$ and the interface critical point $P_1$. We can thus define the sets:
\begin{equation*}
\begin{split}
&A_0:=\{k>0: {\rm the \ orbit \ from \ }P_0 \ {\rm with} \ Z\sim kX \ {\rm enter} \ P_{\gamma_0}\},\\
&B_0:=\{k>0: {\rm the \ orbit \ from \ }P_0 \ {\rm with} \ Z\sim kX \ {\rm enter} \ P_1\},\\
&C_0:=\{k>0: {\rm the \ orbit \ from \ }P_0 \ {\rm with} \ Z\sim kX \ {\rm enter} \ Q_3\}.
\end{split}
\end{equation*}
The hypothesis implies that $A_0$ and $C_0$ are nonempty. Moreover, $A_0\cup B_0\cup C_0=(0,\infty)$ and $A_0$ and $C_0$ are open sets, since both $P_{\gamma_0}$ and $Q_3$ are attractors. Thus the complement $B_0$ is a closed and nonempty set, hence there exists at least a connection in the phase space from $P_0$ to $P_1$ as claimed.
\end{proof}
We are now in a position to prove part (b) of Theorem \ref{th.large}.
\begin{proof}[Proof of Theorem \ref{th.large},\,(b)]
Let $p\in(1,m)$ and $\sigma\in(\sigma_1,\infty)$, with $\sigma_1>0$ defined in Proposition \ref{prop.P2large}. Then Proposition \ref{prop.P2large} implies that the orbit starting from $P_2$ enters $Q_3$ and there are orbits going out of $P_0$ and entering $Q_3$. We also infer from Lemma \ref{lem.tail} that there are orbits going out of $P_0$ and entering $P_{\gamma_0}$. Lemma \ref{lem.P0P1} then proves that there are orbits coming out of $P_0$ and entering $P_1$, for any $p\in(1,m)$ and $\sigma\in(\sigma_1,\infty)$. Any profile contained in such an orbit is a good profile with interface with behavior near $\xi=0$ given by \eqref{case2}.
\end{proof}
Since we are also interested in profiles with decay \eqref{tail} as $\xi\to\infty$, we prove first the following result for $\sigma=0$.
\begin{lemma}\label{lem.homogeneous}
In the homogeneous case $\sigma=0$, all the good profiles with $f(0)=0$ and $(f^m)'(0)=0$, go to a constant as $\xi\to\infty$, that is
$$
\lim\limits_{\xi\to\infty}f(\xi)=\left(\frac{1}{p-1}\right)^{1/(p-1)}.
$$
\end{lemma}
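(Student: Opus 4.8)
The plan is to argue in the autonomous phase space of the system \eqref{PSsyst2} with $\sigma=0$. A good profile with $f(0)=0$, $(f^m)'(0)=0$ corresponds to an orbit leaving one of the critical points $P_0$ or $P_2$ (Lemmas \ref{lem.1} and \ref{lem.3}, carrying the two local behaviors \eqref{case2} and \eqref{case1}). Since for $\sigma=0$ the attractor $P_{\gamma_0}$ carries exactly the profiles with the tail \eqref{queue}, which is the constant $(1/(p-1))^{1/(p-1)}$ when $\sigma=0$ (Lemma \ref{lem.4}), the statement reduces to showing that every such orbit has $\omega$-limit $P_{\gamma_0}$. The nodes $Q_1,Q_2,Q_5$ at infinity require $X$ or $Y$ to diverge and are therefore unreachable as $\xi\to\infty$ once we know $f$ is bounded (so that $X=\xi^{-2}f^{m-1}\to0$) and $Y<\alpha/m$ (Proposition \ref{lem.unifbound}); $Q_4$ is excluded by Lemma \ref{lem.Q4}; and the points $P_\gamma$ with $\gamma\neq\gamma_0$ attract no orbit (Lemma \ref{lem.4}). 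Thus the only competitors to $P_{\gamma_0}$ that remain are the interface point $P_1$ and the stable node $Q_3$.

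The decisive tool, available precisely because the reaction is homogeneous, is a monotone energy. Writing \eqref{SSODE} with $\sigma=0$ as $(f^m)''=\alpha f-f^p-\beta\xi f'$ and setting
\begin{equation*}
\mathcal E(\xi):=\tfrac12\big((f^m)'(\xi)\big)^2-\frac{m\alpha}{m+1}f(\xi)^{m+1}+\frac{m}{m+p}f(\xi)^{m+p},
\end{equation*}
a direct differentiation that uses the equation collapses to
\begin{equation*}
\mathcal E'(\xi)=-\beta m\,\xi\,f(\xi)^{m-1}\big(f'(\xi)\big)^2\le0 .
\end{equation*}
Hence $\mathcal E$ is non-increasing. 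Both local behaviors \eqref{case1} and \eqref{case2} give $f(0)=0$ and $(f^m)'(0)=0$, so $\mathcal E(0)=0$ and therefore $\mathcal E(\xi)\le 0$ on the whole existence interval.

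From $\mathcal E\le 0$ I would extract two consequences. First, $\tfrac12((f^m)')^2\le \frac{m\alpha}{m+1}f^{m+1}-\frac{m}{m+p}f^{m+p}$, and the right-hand side is nonnegative only for $f\le M:=[\alpha(m+p)/(m+1)]^{1/(p-1)}$; hence $f$ (and with it $(f^m)'$) is uniformly bounded and the orbit stays in a compact part of the phase space. Second, and this is the heart of the matter, $f$ cannot vanish at any finite $\xi_0>0$: if it did, with $f>0$ on $(0,\xi_0)$, then $\mathcal E(\xi_0)=\tfrac12((f^m)'(\xi_0))^2\ge 0$, which together with $\mathcal E(\xi_0)\le\mathcal E(0)=0$ forces $\mathcal E\equiv0$ on $[0,\xi_0]$, so $f'\equiv0$ there, contradicting $f(\xi_0)=0$ while $\max f\ge (1/(p-1))^{1/(p-1)}>0$ by Lemma \ref{lem.max}. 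This single computation simultaneously rules out convergence to $P_1$ (finite interface, $(f^m)'(\xi_0)=0$) and to $Q_3$ (finite vanishing with $(f^m)'(\xi_0)\neq0$), which is exactly the delicate dichotomy.

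Consequently $f$ is positive, bounded and globally defined on $(0,\infty)$. The same energy closes the convergence: $\mathcal E$ is bounded below and monotone, so $\int_0^\infty \xi f^{m-1}(f')^2\,d\xi<\infty$, and together with the uniform bounds and Lemma \ref{lem.max} (which keeps any oscillation of order one) this is a LaSalle-type dissipation forcing $f$ to a positive equilibrium of \eqref{SSODE}, necessarily $(1/(p-1))^{1/(p-1)}$; equivalently, the bounded orbit must enter the only admissible attractor $P_{\gamma_0}$, and Lemma \ref{lem.4} yields the stated limit. The main obstacle is exactly the finite-$\xi$ vanishing alternative $P_1$ versus $Q_3$: a type-(P2) profile could a priori terminate in either way, and the role of $\mathcal E$ is to discard both at once, with no need for the heavy barrier construction used for large $\sigma$ in Proposition \ref{prop.P2large}. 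Verifying the sign of $\mathcal E'$ and the inaccessibility of $Q_1,Q_2,Q_4,Q_5$ are then the only routine checks that remain.
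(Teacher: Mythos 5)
Your proposal is correct and follows essentially the same route as the paper: your Lyapunov function $\mathcal{E}$ is exactly the quantity the paper obtains by multiplying \eqref{SSODE} (with $\sigma=0$) by $f^{m-1}f'$ and integrating from $0$ using $f(0)=0$, $(f^m)'(0)=0$, and both arguments use its nonpositivity to exclude a finite vanishing point and then invoke the phase-space classification to land in $P_{\gamma_0}$. The only cosmetic differences are that you phrase the identity in differential (energy-dissipation) form before integrating and add the explicit exhaustion of the points at infinity, which the paper leaves implicit.
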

\begin{proof}
We adapt here an argument from \cite[Lemma 1, pp. 183-184]{S4} (in
the cited book, it is used for profiles with $f(0)=a>0$ and
$f'(0)=0$). We multiply Eq. \eqref{SSODE} (with $\sigma=0$) by
$f^{m-1}(\xi)f'(\xi)$ to get
\begin{equation}\label{interm24}
\begin{split}
(m-1)(f^{m-1}f')(\xi)(f^{m-1}f')'(\xi)&-\alpha(f^{m}f')(\xi)+\beta\xi
f^{m-1}(\xi)(f')^2(\xi)\\&+(f^{m+p-1}f')(\xi)=0.
\end{split}
\end{equation}
Integrating \eqref{interm24} on a generic interval $(0,\xi)$ and
taking into account the initial conditions $f(0)=0$, $(f^m)'(0)=0$,
we get
\begin{equation}\label{interm25}
\begin{split}
\frac{1}{m+p}f^{m+p}(\xi)&-\frac{\alpha}{m+1}f^{m+1}(\xi)+\frac{m-1}{2}(f^{m-1}f')^2(\xi)\\&=-\beta\int_{0}^{\xi}sf^{m-1}(s)(f')^2(s)\,ds<0.
\end{split}
\end{equation}
This shows that there is no point $\xi_0>0$ such that $f(\xi_0)=0$,
as the existence of such point would contradict \eqref{interm25}. We
thus infer that $f(\xi)>0$ for any $\xi>0$. But coming back to the
list of behaviors in Section \ref{sec.local} (which at the level of
the phase space also works for $\sigma=0$), the only possibility is
that the profiles $f$ belong to orbits entering the attractor
$P_{\gamma_0}$, that is, go to the special constant
$(1/(p-1))^{1/(p-1)}$ as $\xi\to\infty$, as stated.
\end{proof}
This helps us to complete the proof of our Theorem \ref{th.decay}.
\begin{proof}[Proof of Theorem \ref{th.decay}] As the point $P_{\gamma_0}$ is an attractor for any $\sigma\geq0$ and as for $\sigma=0$, the orbit going out of $P_2$ and all orbits going out of $P_0$ enter $P_{\gamma_0}$ according to Lemma \ref{lem.homogeneous}, standard continuity arguments show that this fact stays true in a right neighborhood $\sigma\in(0,\sigma_0)$ of $\sigma=0$. We stress here that, although at the level of the phase space is the same behavior, at the level of profiles a big difference occur when passing from $\sigma=0$ to $\sigma>0$: in the former, these profiles were converging to the constant $(1/(p-1))^{1/(p-1)}$ as $\xi\to\infty$, while in the latter they decay to zero with the decay rate given in \eqref{queue}. Finally, Lemma \ref{lem.tail} shows that for any $\sigma>0$ there exists at least an orbit connecting $P_0$ and $P_{\gamma_0}$ in the phase space, concluding the proof.
\end{proof}

\subsection{Critical connections. Proof of Theorem \ref{th.large}\,(a)}

With the preparations done in the previous subsections, we are now in a position to proceed to the proof of our remaining result, that is the existence of \emph{good profiles with interface} with behavior as in \eqref{case1} starting with $f(0)=0$, $(f^m)'(0)=0$, for any $p\in(1,m)$ and some $\sigma^*=\sigma^*(m,p)>0$.
\begin{proof}[Proof of Theorem \ref{th.large},\,(a)]
Fix $m>1$. On the one hand, it follows in particular from the above proof of Theorem \ref{th.decay} that for any $p\in(1,m)$, there exists $\sigma_0>0$ such that the orbit starting from $P_2$ for $\sigma\in(0,\sigma_0)$ enters $P_{\gamma_0}$. On the other hand, we infer from Proposition \ref{prop.P2large} that for any $p\in(1,m)$ and $\sigma>\sigma_1>0$ sufficiently large, the orbit coming out of $P_2$ enters the critical point $Q_3$. Thus, by Lemma \ref{lem.P0P1}, part (a), we deduce that for any $p\in(1,m)$ there exists at least a connection from $P_2$ to $P_1$ for some $\sigma=\sigma^{*}(p)$, hence a \emph{good profile with interface} $f$ starting with $f(0)=0$, $(f^m)'(0)=0$, as claimed.
\end{proof}

For the reader's convenience, we gather in Figure \ref{fig10} below a visual representation of how the connections starting from the points $P_2$ and $P_0$ in the phase space associated to the system \eqref{PSsyst2} vary with $\sigma>0$. We notice how the good profiles with interface arrive from these two points according to $\sigma$, as proved above. The numerical experiments were realized with $m=3$, $p=1.5$ and for the three cases $\sigma=1.5$ (small), $\sigma=2.3218$ (critical) and $\sigma=3$ (large).

\begin{figure}[ht!]
  \begin{center}
  \subfigure[$\sigma$ small]{\includegraphics[width=7cm,height=5cm]{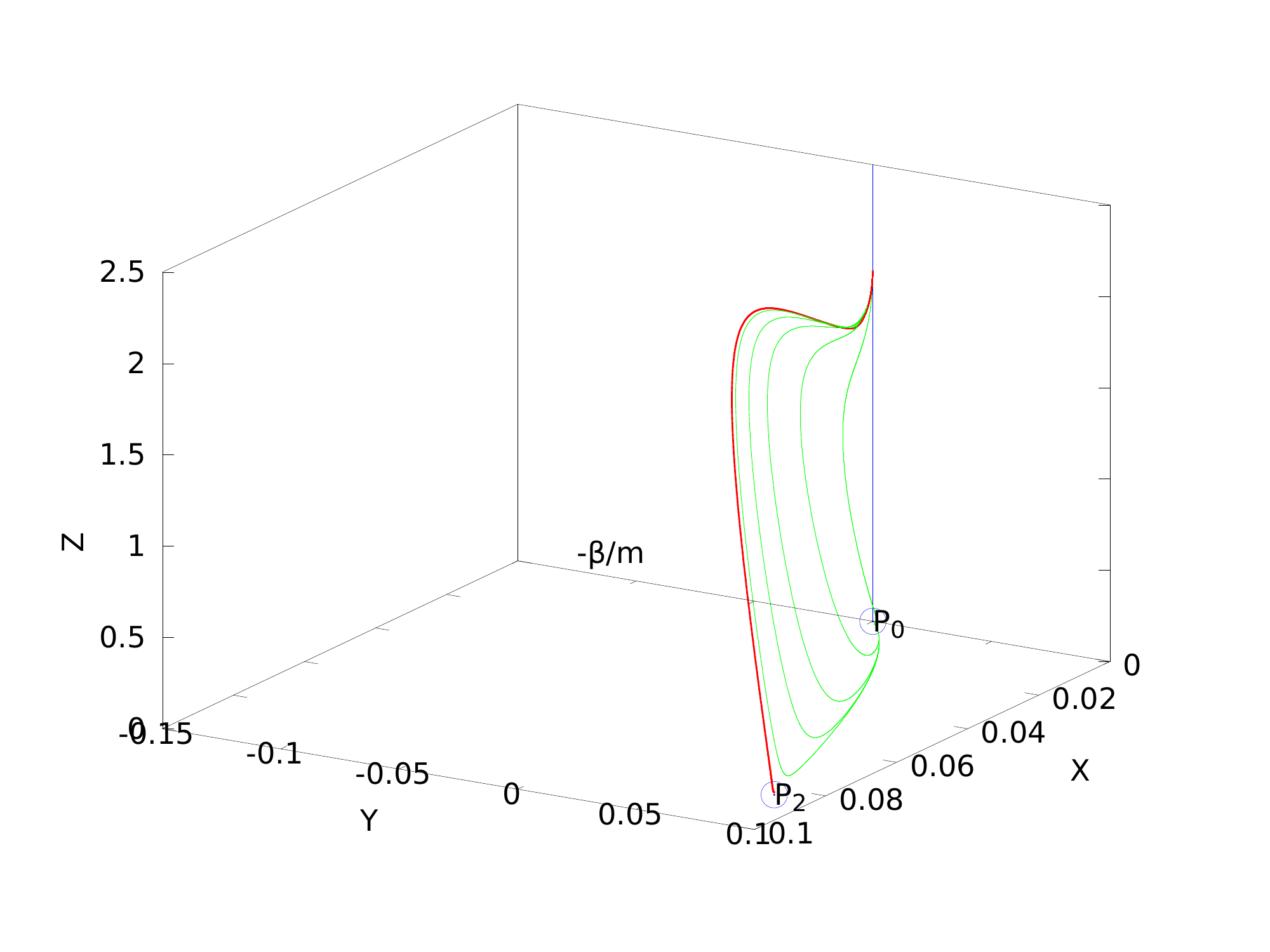}}
  \subfigure[Critical $\sigma^*$]{\includegraphics[width=7cm,height=5cm]{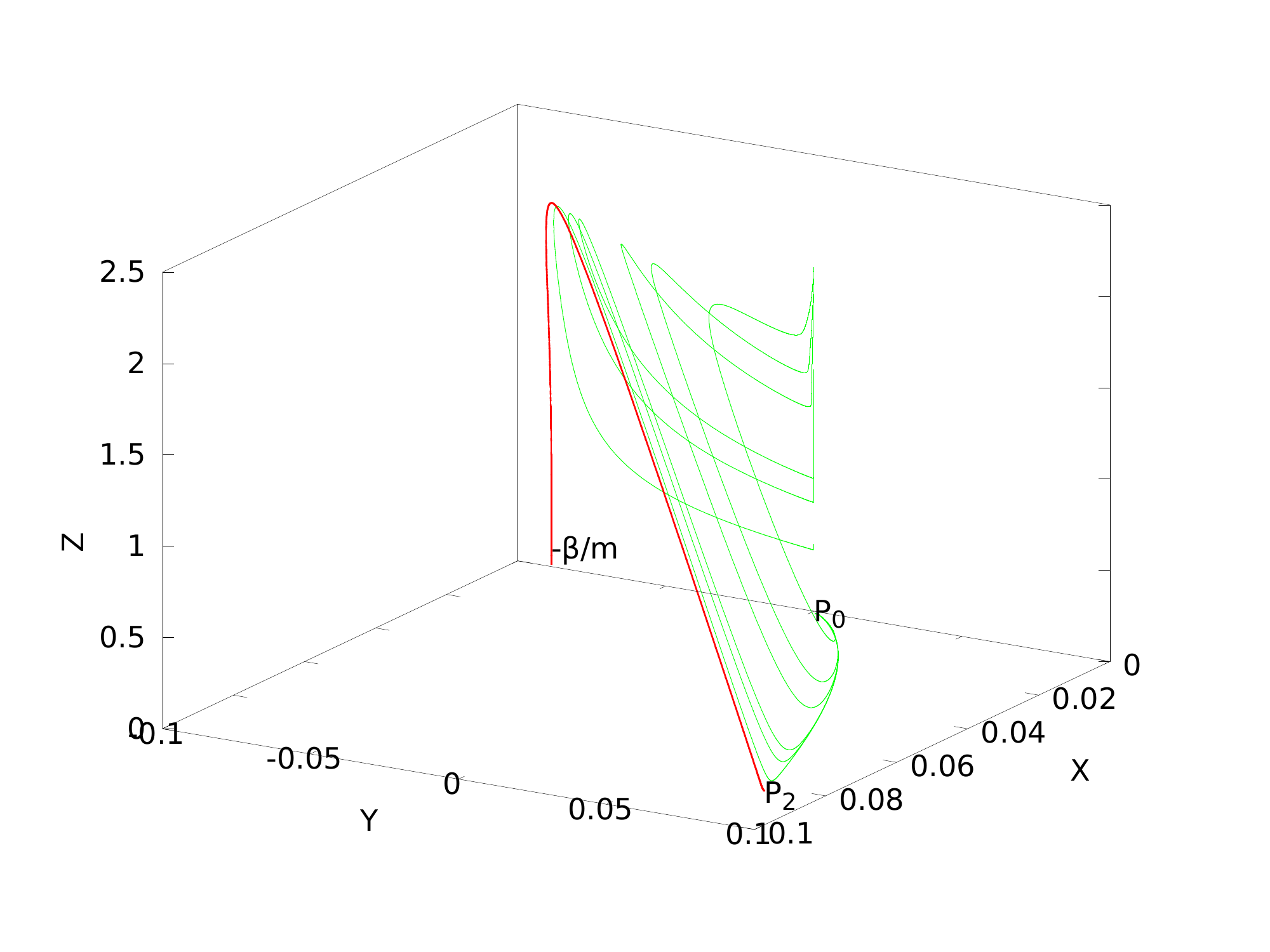}}
  \subfigure[$\sigma$ large]{\includegraphics[width=7cm,height=5cm]{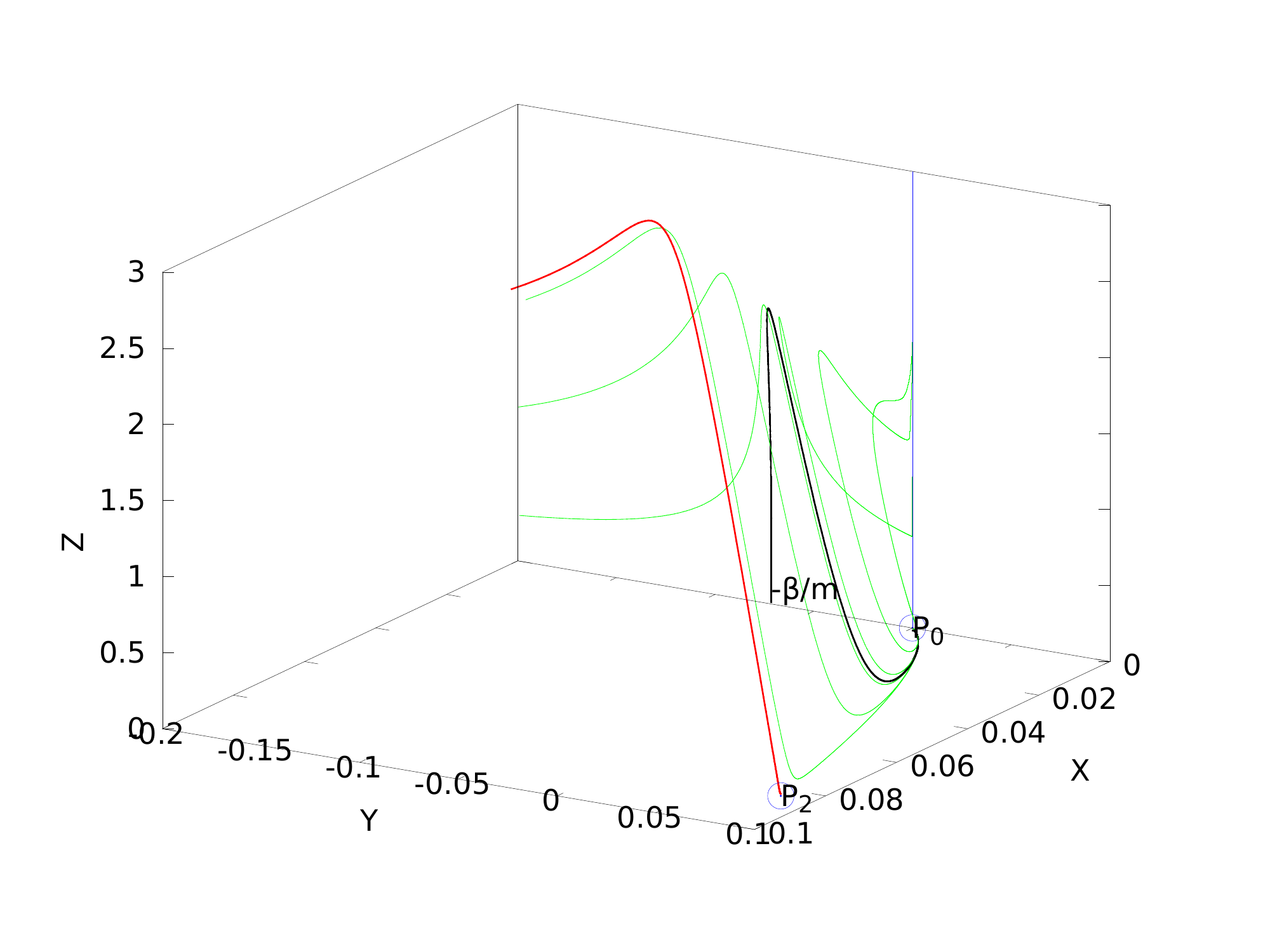}}
  \end{center}
  \caption{Orbits from $P_2$ and $P_0$ for different values of $\sigma$}\label{fig10}
\end{figure}

\section*{Acknowledgements} R. I. is supported by the ERC Starting Grant GEOFLUIDS 633152. A. S. is partially supported by the Spanish project MTM2017-87596-P.

\bibliographystyle{plain}

\begin{thebibliography}{1}

\bibitem{AT05}
D. Andreucci, and A. F. Tedeev, \emph{Universal bounds at the
blow-up time for nonlinear parabolic equations}, Adv. Differential
Equations, \textbf{10} (2005), no. 1, 89-120.


\bibitem{BZZ11}
X. Bai, S. Zhou, and S. Zheng, \emph{Cauchy problem for fast
diffusion equation with localized reaction}, Nonlinear Anal.,
\textbf{74} (2011), no. 7, 2508-2514.

\bibitem{BL89}
C. Bandle, and H. Levine, \emph{On the existence and nonexistence of
global solutions of reaction-diffusion equations in sectorial
domains}, Trans. Amer. Math. Soc., \textbf{316} (1989), 595-622.

\bibitem{BK87}
P. Baras, and R. Kersner, \emph{Local and global solvability of a
class of semilinear parabolic equations}, J. Differential Equations,
\textbf{68} (1987), 238-252.

\bibitem{BHK01}
M. Bowen, J. Hulshof, and J. R. King, \emph{Anomalous exponents and dipole solutions for the thin film equation}, SIAM J. Appl. Math., \textbf{62} (2001), no. 1, 149-179.

\bibitem{CQW03}
X. Chen, Y. Qi, and M. Wang, \emph{Self-similar singular solutions
of a $p$-Laplacian evolution equation with absorption}, J.
Differential Equations, \textbf{190} (2003), 1-15.

\bibitem{FdPV06}
R. Ferreira, A. de Pablo, and J. L. V\'azquez, \emph{Classification
of blow-up with nonlinear diffusion and localized reaction}, J.
Differential Equations, \textbf{231} (2006), no. 1, 195-211.

\bibitem{FV}
R. Ferreira, and J. L. V\'azquez, \emph{Extinction behaviour for
fast diffusion equations with absorption}, Nonlinear Anal.,
\textbf{43} (2001), no. 8, 943-985.

\bibitem{GV}
V. A. Galaktionov, and J. L. V\'azquez, \emph{Continuation of blowup
solutions of nonlinear heat equations in several space dimensions},
Comm. Pure Appl. Math, \textbf{50} (1997), no. 1, 1-67.

\bibitem{GU05}
Y. Giga, and N. Umeda, \emph{Blow-up directions at space infinity for solutions of semilinear heat equations}, Bol. Soc. Paran. Mat., \textbf{23} (2005), 9-28.

\bibitem{GU06}
Y. Giga, and N. Umeda, \emph{On blow-up at space infinity for semilinear heat equations}, J. Math. Anal. Appl., \textbf{316} (2006), 538-555.

\bibitem{GP76}
B. H. Gilding, and L. A. Peletier, \emph{On a class of similarity solutions of the porous media equation}, J. Math. Anal. Appl., \textbf{55} (1976), 351-364.

\bibitem{GLS}
J.-S. Guo, C.-S. Lin, and M. Shimojo, \emph{Blow-up behavior for a
parabolic equation with spatially dependent coefficient}, Dynam.
Systems Appl., \textbf{19} (2010), no. 3-4, 415-433.

\bibitem{GS11}
J.-S. Guo, and M. Shimojo, \emph{Blowing up at zero points of
potential for an initial boundary value problem}, Commun. Pure Appl.
Anal., \textbf{10} (2011), no. 1, 161-177.

\bibitem{GLS13}
J.-S. Guo, C.-S. Lin, and M. Shimojo, \emph{Blow-up for a
reaction-diffusion equation with variable coefficient}, Appl. Math.
Lett., \textbf{26} (2013), no. 1, 150-153.

\bibitem{HS}
M. W. Hirsch, and S. Smale, \emph{Differential equations, dynamical
systems, and linear algebra}, Pure and Applied Mathematics, vol. 60,
Academic Press, New York-London, 1974.

\bibitem{IL13a}
R. G. Iagar, and Ph.~Lauren\ced{c}ot, \emph{Existence and uniqueness
of very singular solutions for a fast diffusion equation with
gradient absorption},  J. London Math. Soc., \textbf{87} (2013),
509-529.

\bibitem{IL13b}
R. G. Iagar, and Ph. Lauren\ced{c}ot, \emph{Eternal solutions to a
singular diffusion equation with critical gradient absorption},
Nonlinearity, \textbf{26} (2013), no. 12, 3169-3195.

\bibitem{IS1}
R. Iagar, and A. S\'anchez, \emph{Blow up profiles for a quasilinear reaction-diffusion equation with weighted diffusion and linear growth}, J. Dynamics Differential Equations, to appear (accepted December 2018).

\bibitem{ISV}
R. G. Iagar, A. S\'anchez, and J. L. V\'azquez, \emph{Radial
equivalence for the two basic nonlinear degenerate diffusion
equations}, J. Math. Pures Appl., \textbf{89} (2008), no. 1, 1-24.

\bibitem{IU08}
T. Igarashi, and N. Umeda, \emph{Existence and nonexistence of
global solutions in time for a reaction-diffusion system with
inhomogeneous terms}, Funkcial. Ekvac., \textbf{51} (2008), no. 1,
17-37.

\bibitem{KWZ11}
X. Kang, W. Wang, and X. Zhou, \emph{Classification of solutions of
porous medium equation with localized reaction in higher space
dimensions}, Differential Integral Equations, \textbf{24} (2011),
no. 9-10, 909-922.

\bibitem{La84}
A. A. Lacey, \emph{The form of blow-up for nonlinear parabolic equations}, Proc. Royal Society Edinburgh Sect. A, \textbf{98} (1984), no. 1-2, 183-202.

\bibitem{Liang12}
Z. Liang, \emph{On the critical exponents for porous medium equation
with a localized reaction in high dimensions}, Commun. Pure Appl.
Anal., \textbf{11} (2012), no. 2, 649-658.

\bibitem{Ly51}
L. S. Lyagina, \emph{The integral curves of the equation $y'=\frac{ax^2+bxy+cy^2}{dx^2+exy+fy^2}$}, Uspekhi Mat. Nauk, \textbf{6} (1951), no. 2, 171-183 (Russian).


\bibitem{dPS00}
A. de Pablo, and A. S\'anchez, \emph{Global travelling waves in reaction-convection-diffusion equations}, J. Differential Equations, \textbf{165} (2000), no. 2, 377-413.

\bibitem{Pe}
L. Perko, \emph{Differential equations and dynamical systems. Third
edition}, Texts in Applied Mathematics, \textbf{7}, Springer Verlag,
New York, 2001.

\bibitem{Pi97}
R. G. Pinsky, \emph{Existence and nonexistence of global solutions
for $u_t=\Delta u+a(x)u^p$ in $\real^d$}, J. Differential Equations,
\textbf{133} (1997), no. 1, 152-177.

\bibitem{Pi98}
R. G. Pinsky, \emph{The behavior of the life span for solutions to
$u_t=\Delta u+a(x)u^p$ in $\real^d$}, J. Differential Equations,
\textbf{147} (1998), no. 1, 30-57.

\bibitem{QS}
P. Quittner, and Ph. Souplet, \emph{Superlinear parabolic problems.
Blow-up, global existence and steady states}, Birkhauser Advanced
Texts, Birkhauser Verlag, Basel, 2007.

\bibitem{S4}
A. A. Samarskii, V. A. Galaktionov, S. P. Kurdyumov, and A. P.
Mikhailov, \emph{Blow-up in quasilinear parabolic problems}, de
Gruyter Expositions in Mathematics, \textbf{19}, W. de Gruyter,
Berlin, 1995.

\bibitem{Su02}
R. Suzuki, \emph{Existence and nonexistence of global solutions of
quasilinear parabolic equations}, J. Math. Soc. Japan, \textbf{54}
(2002), no. 4, 747-792.

\end{thebibliography}

\end{document}